\newtheorem{thm}{Theorem}[section]
\newtheorem{lemma}[thm]{Lemma}
\newtheorem{defi}[thm]{Definition}
\newtheorem{cor}[thm]{Corollary}
\newtheorem{prop}[thm]{Proposition}
\newcommand{\Aut}{\operatorname{Aut}}
\newcommand{\aut}{\operatorname{\mathfrak{aut}}}
\newcommand{\id}{\operatorname{id}}
\newcommand{\rank}{\operatorname{rank}}
\newcommand{\diag}{\operatorname{diag}}
\newcommand{\adiag}{\operatorname{adiag}}
\newcommand{\dist}{\operatorname{dist}}
\newcommand{\op}{\operatorname{op}}
\newcommand{\vp}{{\varphi}}
\newcommand{\eps}{{\varepsilon}}
\newcommand{\tr}{\operatorname{tr}}
\newcommand{\cR}{{\mathcal R}}
\newcommand{\cL}{{\mathcal L}}
\newcommand{\C}{{\mathbb C}}
\newcommand{\N}{{\mathbb N}}
\newcommand{\set}[2]{\left\{#1\left|\; #2\right.\right\}}
\newcommand{\rest}[2]{#1\raisebox{-0.5ex}{\mbox{$\mid_{#2}$}}}
\newcommand{\bt}{\begin{thm}}
\newcommand{\et}{\end{thm}}
\newcommand{\bp}{\begin{proof}}
\newcommand{\ep}{\end{proof}}
\newcommand{\bl}{\begin{lemma}}
\newcommand{\el}{\end{lemma}}
\newcommand{\bc}{\begin{cor}}
\newcommand{\ec}{\end{cor}}
\newcommand{\bd}{\begin{description}}
\newcommand{\ed}{\end{description}}
\newcommand{\fA}{{\mathfrak A}}
\newcommand{\fC}{{\mathfrak C}}
\newcommand{\fT}{{\mathfrak T}}
\newcommand{\fk}{\mathfrak k}
\newcommand{\fm}{\mathfrak m}
\newcommand{\M}{\mathbb M}
\newcommand{\Z}{\mathbb Z}
\newcommand{\Li}{\mathbb L}
\newcommand{\sub}{\subseteq}
\begin{document}

\thispagestyle{empty}
\title{\textsf{Inductive limits of finite dimensional hermitian symmetric spaces and K-theory}}
\date{}
\author{Dennis Bohle, Wend Werner\thanks{The second named author acknowledges the support of Deutsche Forschungsgemeinschaft through SFB 878 as well as the Australian Research Council.
He thanks colleagues at the Mathematical Sciences Institute in the Australian National University
for hospitality while this research was completed.}\\
Mathematisches Institut\\
Westf\"alische Wilhelms-Universit\"at\\
Einsteinstra\ss e 62\\
48149 M\"unster\\
\\
dennis.bohle@math.uni-muenster.de\\
wwerner@math.uni-muenster.de}
\maketitle

\begin{abstract}
K-Theory for hermitian symmetric spaces of non-compact type, as developed recently by the authors,
allows to put Cartan's classification into a homological perspective. We apply this method
to the case of inductive limits of finite dimensional hermitian symmetric spaces. This might be seen as an indication of how
much more powerful the homological theory is in comparison to the more classical approach.

When seen from high above, we follow the path laid out by a similar result in the theory of C*-algebras.
Important is a clear picture of the behavior of morphisms between bounded
symmetric domains of finite dimensions, which is more complex than in the C*-case, as well as an accessible
K-theory.
We furthermore have to slightly modify the invariant from our previous work.
Roughly, we use traces left by co-root lattices on K-groups, instead of co-roots themselves, which had been used previously.
\end{abstract}

\section{Introduction}
In \cite{BoWe2}, the classification for hermitian symmetric spaces of non-compact type
was shown to possess a homological background, coming from appropriately defined ordered K-groups,
with some marked subsets. We show here that this new approach permits to classify inductive limits
of finite dimensional bounded symmetric domains in quite the same way.

The category in which the inductive limits are built are the bounded symmetric domains of complex
Banach spaces. Hence, limits are obtained by an apparently unavoidable completion of the algebraic limit.
Classifying symmetric spaces by the classical method has two important ingredients: Root systems and
a Hilbertian structure on the ambient vector space (the Killing form). Both these structures become
in general impossible to deal with when tackling inductive limits. The Hilbertian structure disappears
and morphisms of higher multiplicity do not respect (simple) roots. They do respect root lattices, however.

In our approach, a threefold product will play a crucial role. It is defined on the Banach space
surrounding the domain and turns it into an algebraic object, a JB*-triple system, more general
than a C*-algebra.
It is intimately linked to the underlying Lie algebra of the
holomorphic automorphisms of the domain, so that e.g. roots still leave their traces (in the form of
so called \emph{grids}), as explored in the work of Neher \cite{Neher-3gradedrootsystemsandgridsinJordantriplesystems,
Neher-Systemesderacines3gradues}. The approach in \cite{BoWe2} favored an approach in which (classes coming from) grids
where marked in K-groups. Here, we will follow a slightly different path, laid out in \cite{BoWe3}, which
substitutes roots by root lattices. Furthermore, the natural norm of a JB*-triple system behaves favorable in the process
of building inductive limits, thus surmounting the intrinsic difficulty posed by the Hilbertian structure in the
Cartan/Serre approach.

As JB*-triple systems resemble in many aspects C*-algebras, this paper might be seen as a non-trivial
extension of the very important work of Elliot \cite{Elliott-Ontheclassificationofinductivelimitsofsequencesofsemisimplefinitedimensionalalgebras},
which has influenced much of the modern structure theory of C*-algebras.

Our main result is

\subparagraph{Theorem}
\emph{Each inductive limit of finite dimensional bounded symmetric domains $U$ in a complex Banach space
$E$ admits a decomposition
\begin{equation*}
    U=U_P\oplus U_E\oplus U_S\oplus U_H
\end{equation*}
where all of the summands are sums of open unit balls; for the summand $U_H$, of Hilbert spaces,
of spin factors in case of the domain $U_S$, and of exceptional factors for $U_E$.
\\
The principal part $U_P$ of $U$ is an inductive limit
of indecomposable hermitian, symplectic and rectangular domains and can be classified by its $K^{\pm}$-invariant.}

\subparagraph{}
Unintelligible notation used in the above will be explained in the next sections.

Our plan here ist the following: We start (in section 3) with an investigation of morphisms
between finite dimensional bounded symmetric domains. Not all these morphisms will eventually play a role in the
inductive limit, and so we restrict our attention to those which do. In section 4 an
invariant is introduced, based on K$_0$-groups carrying some marked subsets together with one of its involutive
automorphisms.
Section 5 contains the proof of the fact that we are dealing with a complete isomorphism invariant, and
in the final section we modify the concept of a Brateli
diagram, a graphical storage device used in C*-algebra theory, in order to become helpful for the
actual classification of inductive limits.

We will briefly collect necessary technical background in the following section.

\section{Background}
Standard references for the following are the monographs \cite{BlecherLeMerdy-Operatoralgebrasandtheirmodules,ChuBook,
Upmeier-SymmetricBanachmanifoldandJordaCalgebras}, the surveys \cite{Russo-Survey,Angel-Survey} as well as the articles
\cite{Harris-Boundedsymmetrichomogeneousdomainsininfinitedimensionalspaces,
Kaup-ARiemannMappingTheoremForBoundedSymmetricDomainsInComplexBanachSpaces}.

\paragraph{Bounded symmetric domains}
A bounded domain $\Omega$ in a complex Banach space $E$ is called symmetric iff for every $x\in\Omega$, there is an
involutive biholomorphic map of $\Omega$ onto itself for which $x$ is an isolated fixed point.
In finite dimensions, each such domain can be equipped with an essentially unique hermitian
structure so that biholomorphic automorphism coincide with isometries. Finite dimensional
bounded symmetric domains turn out to be the hermitian symmetric spaces of non-compact type.

Each bounded symmetric domain can actually be biholomorphically mapped onto the open unit ball $U$ of
a Banach space $E$. Let $\Aut U$ denote the group of biholomorphic automorphism of $U$ and denote by $K$
the isotropy subgroup at the origin. Then $U=\Aut U/K$. $\Aut U$ is a (real) Lie group, and if we pass to its
Lie algebra $\aut U$, the symmetry of $U$ yields a $\Z_2$-grading $\aut U=\fk\oplus\fm$, where $\fk$ is a Lie subalgebra,
and $\fm$ can be identified with the Banach space $E$ itself. $\fm$ is not a Lie subalgebra; due to the grading,
however, it has the property that for any choice of elements $m_{1,2,3}\in\frak{m}$ the triple Lie bracket
$[m_1,[m_2,m_3]]$ is an element of $\frak{m}$.

\paragraph{JB*-triple systems}
A very prominent example of a bounded symmetric domain is
the open unit ball of a C*-algebra $\mathfrak{A}$. In this case, the Lie triple product is connected
to the C*-structure by the equation
\begin{equation*}
    [a,[b,c]]-i[a,[ib,c]]=\frac{ab^\ast c+cb^\ast a}{2}
\end{equation*}
valid for all $a,b,c$ in $\mathfrak{A}$. This leads to the following definition: A closed subspace $Z$ of a C*-algebra
$\frak{A}$ is called a \emph{JC*-triple}
iff for all $a,b,c\in Z$
\begin{equation*}
    \{a,b,c\}:=\frac{ab^\ast c+cb^\ast a}{2}\in Z.
\end{equation*}
The open unit ball of a JC*-triple is a bounded symmetric domain, but not all these
domains appear in such a way. A slightly more general, and
intrinsic, definition leads to all of them.
\begin{defi}
A Banach space $Z$ together with a trilinear, continuous and w.r.t. the outer
variables symmetric mapping $\{\cdot,\cdot,\cdot\}:Z^3\to Z$ is called a \emph{JB*-triple}, iff
\begin{itemize}
\item[(a)] $\|\{x,x,x\}\|=\|x\|^3$ for all $x\in Z$,
\item[(b)] With the operator $x\Box y$ defined on $Z$ by $(x\Box y)(z)=\{x,y,z\}$,
$ix\Box y$ is a derivation for the triple product,
\item[(c)] For each $x\in Z$, $x\Box x$ has non-negative spectrum, and $\exp(it(x\Box x))$ is a 1-parameter group of isometries.
\end{itemize}
\end{defi}
Up to biholomorphic maps, bounded symmetric domains and the open unit balls of JB*-triple are now the
same class\cite{Kaup-AlgebraiccharacterizationofsymmetriccomplexBanachmanifolds}.
In finite dimensions, the open unit balls of the following JB*-triples
represent the classical, indecomposable, bounded symmetric domains, the \emph{Cartan factors}:
\begin{description}
\item[Type I] complex $n\times m$-matrices $\M_{n,m}(\mathbb{C})$, the \emph{rectangular factor} $C^{\textrm{I}}_{m,n}$
\item[Type II] skew-symmetric, complex $n\times n$-matrices, the \emph{symplectic factor}, $C^{\textrm{II}}_n$, $n\geq 4$,
\item[Type III] the \emph{hermitian factor}, consisting of symmetric complex $n\times n$-matrices, $C^{\textrm{III}}_n$, $n\geq 2$,
\item[Type IV] the $n$-dimensional \emph{spin factor} $C^{\textrm{IV}}_n$, which is the
closed linear span of selfadjoint matrices $s_1,\ldots,s_n$, $n\geq 2$, satisfying
\begin{equation*}
s_is_j+s_js_i=2\delta_{ij}1
\end{equation*}
for all $i,j\in\{1,\ldots,n\}$
\item[Type V,VI] two \emph{exceptional factors} in dimensions $16$ and $27$
\end{description}
The triple product is, for the non-exceptional factors, the one based on matrix
products (and standard conjugation). These factors are mutually non-isomorphic with the exception
of $\M_{2,2}$, the symplectic factor for $n=4$, and the hermitian factor for $n=2$, which are spin factors,
of dimensions 4, 6 and 3, respectively. Note that the vector space structure admits the identity as an
element of $C^{\textrm{IV}}_n$ only if $n=1$.

JB*-triples that cannot be embedded into a C*-algebra have this property due
to the fact that they possess quotients of type V or VI, respectively.

Recall that type IV Cartan factors $Z$ can, alternately, be defined as the closed, self-adjoint subspace
of the space $L(H)$ of all bounded operators on the Hilbert space $H$, such that
$Z^2\sub\C\id$. By polarization, there exists a scalar product $\langle\cdot,\cdot\rangle$ on $Z$ so that
\begin{equation*}
    AB^\ast+B^\ast A=2\langle A,B\rangle\id,
\end{equation*}
which yields a canonical Hilbert space structure on each spin factor $Z$. At the same time, $Z$
carries an involution, which will be called `canonical'.

An element $u$ of a JB*-triple $Z$ is called \emph{tripotent} iff $\{u,u,u\}=u$.
Two elements $u,v\in Z$ are called \emph{orthogonal}, iff
$\{a,a,b\}=0$, which is the same as $a^\ast b=ab^\ast =0$ if E is a JC*-subtriple of a C*-algebra.
The \emph{rank} of a JB*-triple system is the maximal
cardinality of a system of non-zero orthogonal tripotents it contains. The rank is an
isomorphism invariant and if $\vp:Z\to W$ is a morphism of
JB*-triple systems, then, necessarily,
$$\rank\vp(Z)\leq\rank W.$$
The ranks of the different Cartan
factors are well known. If $Z\simeq \M_{n,m}$ is a type I Cartan
factor, then $\rank Z=\min\{n,m\}$. The symplectic Cartan factors
$C^{\textrm{II}}_{2n}$ and $C^{\textrm{II}}_{2n+1}$ have $\rank$ $n$ as well as the
hermitian Cartan factor $C^{\textrm{III}}_n$. All Spin factors, independent of
their dimension, have $\rank$ $2$.

\paragraph{Universal TRO}\label{Universal TRO}
A left (full) Hilbert C*-Module $(E,\langle\cdot,\cdot\rangle_\ell)$ with $\langle\cdot,\cdot\rangle_\ell$
taking values in the left C*-algebra $\mathcal{L}(E)$, is, in a natural
way, a JB*-triple:
If $\langle\cdot,\cdot\rangle_r$ denotes the canonical form, acting on the right of $E$ and
taking values in the right C*-algebra $\mathcal{R}(E)$, then
\begin{equation*}
    \{x,y,z\}:=\frac{\langle x,y\rangle_\ell z+x\langle y,z\rangle_r}{2}
\end{equation*}
turns $E$ into a JB*-triple. What distinguishes Hilbert C*-modules among JB*-triples is the existence
of an additional, associative ternary product. More precisely, a JB*-triple $E$ belongs to this class iff it embeds
into a C*-algebra, satisfying the more restrictive condition
\begin{equation*}
    [x,y,z]:=xy^\ast z\in E \quad\text{for all $x,y,z\in E$}.
\end{equation*}
Note that in such a representation, which deviates somewhat from the usual one, left and right C*-algebras of a
Hilbert C*-module can be identified with the closed linear span of the operators
\begin{equation*}
x\longmapsto zy^\ast x,\qquad\text{resp.}\qquad x\longmapsto xy^\ast z.
\end{equation*}
Objects in a category of Hilbert C*-modules in which morphisms are required to respect the product $[\cdot,\cdot,\cdot]$
are usually called \emph{Ternary Rings of Operators}, or TROs, for short.
For each TRO $T$ there exists a canonical C*-algebra, the \emph{linking algebra $\Li(T)$},
formally defined through
$$
\Li(T)=\begin{pmatrix}\cR(T)& T^\ast \\
                        T &\cL(T)\end{pmatrix},
$$
with $T^\ast $ the TRO conjugate to $T$.
Note that the expressions $\cL,\cR$, and $\Li$ actually become functorial by letting
\begin{gather*}
\cL(\Phi)(\sum x_iy_i^\ast )=\sum \Phi(x_i)\Phi(y_i)^\ast \qquad
\cR(\Phi)(\sum x_i^\ast y_i)=\sum \Phi(x_i)^\ast \Phi(y_i)\\
\text{and}\qquad
\Li(\Phi)=\begin{pmatrix}\cR(\Phi)& \Phi^\ast \\
                        \Phi &\cL(\Phi)\end{pmatrix}
\end{gather*}
for any TRO-morphism $\Phi$ (see \cite{Hamana-TripleenvelopesandSilovboundariesofoperatorspaces} for details).

In \cite{BoWe1} and \cite{BunceFeelyTimoneyI} a \emph{universal (enveloping) TRO} $\tau(Z)$ has been assigned
to each JB*-triple $Z$, as well as a canonical embedding $\rho_Z:Z\to \tau(Z)$. The image $\rho_Z(Z)$ generates
$\tau(Z)$ as a TRO.
Furthermore, each JB*-morphism uniquely lifts to a TRO-morphism between the respective universal TROs,
and the emerging functor has all properties needed for the ensuing K-theory.
It is important that for a finite dimensional $Z$, the universal TRO can be calculated. The result in the
case of the Cartan factors is as follows.
\begin{itemize}
\item
For the rectangular factor $\M_{m,n}(\mathbb{C})$, $m,n\geq 2$,
the universal TRO is $\M_{m,n}(\mathbb{C})\oplus \M_{n,m}(\mathbb{C})$,
with embedding $A\mapsto A\oplus A^\top$.
\item
Though of rectangular type, Hilbert spaces $H$, of dimension $N$,
have to be treated separately. Here,
\begin{equation*}
\tau(H)=\bigoplus_{n=0}^{N-1} L(\bigwedge^{n+1}H,\bigwedge^n H),
\end{equation*}
and $h\mapsto a(\overline{h})$, with $a$ being the annihilation operator,
is the canonical embedding.
\item
For both, the hermitian and symplectic factors $Z$ of $\M_N(\mathbb{C})$,
their natural embedding into $\M_N(\mathbb{C})$ represents the map
$\rho_Z$.
\item
In the case of the spin factor $Z$ of dimension $n$, we have
$\tau(Z)=\M_{2^n}(\mathbb{C})$ if $n$ is even,
$\tau(Z)=\M_{2^n}(\mathbb{C})\oplus \M_{2^n}(\mathbb{C})$ otherwise.
The embedding is well known from the construction of the CAR-algebra.
\item
In the case of the exceptional factors the universal TRO is the null space.
\end{itemize}
We will make use of a uniquely determined, involutive antiautomorphism $\Phi:\tau(Z)\to \tau(Z)$,
which is the identity on (the canonical copy of) $Z$ \cite[Theorem 3.5]{BunceFeelyTimoneyI}. It can
be defined as follows. Denote, for a given TRO $X$ the opposite TRO by $X^{\op}$, for which $X=X^{\op}$
as complex vector spaces and
\begin{equation*}
    [x,y,z]^{\op}=[z,y,x],\qquad x,y,z\in X^{\op}.
\end{equation*}
Let $\op_X: X\to X^{\op}$ be the formal identity and note that there is a canonical JB*-embedding
$\rho^{\op}_Z:Z\to \tau(Z)^{\op}$. In the present situation, as
$\tau(Z)$ is universal, there exists a TRO-isomorphism $\Phi_0:\tau(Z)\to\tau(Z)^{\op}$
such that $\Phi_0\rho_Z=\rho_Z^{\op}$. Then $\Phi=\op_{\tau(Z)}^{-1}\Phi_0$ is the canonical
involutive antiautomorphism. (Note that $\Phi^2$ is a linear automorphism on $\tau(Z)$ which is the identity
on $Z$.)

This construction has its origin in a similar one from the theory of Jordan algebras. We refer
to \cite[Theorem 7.1.8]{HancheOlsenStoermerJordanoperatoralgebras} for more details.
For hermitian (symplectic) factors, $\Phi$ maps a matrix in the universal TRO
onto its (negative) transpose. On $\M_{m,n}\oplus \M_{n,m}$, the universal
TRO of the rectangular factor $\M_{m,n}$, we have $\Phi(A\oplus B)=B^\top\oplus A^\top$,
which is essentially what happens in the case of Hilbert spaces as well.

JC*-triples $Z$ of types I--III distinguish themselves by the fact that
\begin{equation*}
    \rho_Z(Z)=\set{z\in\tau(Z)}{\Phi(z)=z}
\end{equation*}
Actually, this behavior is characteristic for so called \emph{universally reversible JC*-triples}.
See \cite{BunceFeelyTimoneyI} for details.
\paragraph{K-theory}
The K-functor we will use in the following is obtained in two steps. The first defines
K-groups for TROs: For a TRO $T$ we let $K_\ast^{\mathrm{TRO}}(T)$ be the K-group of its left C*-algebra.
(Note that the right C*-algebra of $T$ is Morita equivalent, and there is no difference
as to which one of them is selected.) As each TRO-morphism $\psi$ yields
C*-morphism $\mathcal{L}(\psi)$ and $\mathcal{R}(\psi)$ between both, left and right C*-algebras,
$K_\ast^{\mathrm{TRO}}(\psi)$ is defined in a functorial way.
\begin{defi}
Denote by $\tau$ the functor that provides each JB*-triple with its universal
TRO. Define, for a JB*-triple Z and a JB*-morphism $\phi$,
\begin{equation*}
    K_\ast^{\mathrm{JB}^\ast }(Z)=K_\ast^{\mathrm{TRO}}(\tau(Z)),
\end{equation*}
as well as
\begin{equation*}
    K_\ast^{\mathrm{JB}^\ast }(\phi)=K_\ast^{\mathrm{TRO}}(\tau(\phi))
\end{equation*}
\end{defi}
This functor has the usual properties that one would expect from it,
except stability, which already has a bad start, as it is not possible,
in general, to equip the space of matrices with entries from a JB*-triple
with the structure of a JB*-triple in a natural way.

The $K_0$-groups for the Cartan factors are the following.
\begin{itemize}
\item $\mathbb{Z}^2$ in the case of rectangular $m\times n$-Matrices, $m,n\geq 2$,
\item $\mathbb{Z}^N$ for Hilbert spaces of dimension $N$,
\item $\mathbb{Z}$ for both, hermitian and symplectic factors,
\item $\mathbb{Z}$ in case of odd dimensional spin factors, and
$\mathbb{Z}^2$ for the even dimensional spin factors)
\end{itemize}

\section{Classifying JB*-morphisms}
In this section, morphisms between finite dimensional
Cartan factors will be classified, in as much as this will be needed in the sequel.
The technique applied here is good for more, though.
We will use an equivalence relation for morphisms which yields identical maps
on K-groups.

\begin{defi}
Suppose $R$ and $S$ are TROs, and let $\phi_{1,2}:R\to S$ be TRO-morphisms.
\begin{description}
\item[(a)]
We call $\phi_1$ and $\phi_2$ \emph{unitarily equivalent} iff
there are unitaries $U\in\mathcal{R}(S)$ and $V\in\mathcal{L}(S)$ so that $\phi_1(x)=V\phi_2(x)U$ for all $x\in R$.
\item[(b)]
JB*-morphisms $\vp_{1,2}:Z_1\to Z_2$ between finite dimensional JC*-triple systems will be called unitarily
equivalent iff the TRO-morphisms $\tau(\vp_1)$ and $\tau(\vp_2)$ have this property.
\end{description}

\end{defi}

\begin{prop}[cf.\ \cite{Bohle}, Proposition 3.2.5]\label{matrixdarst von Tromorph}
Let $R$ and $S$ be finite dimensional TROs and $\phi:R\to S$ be
a TRO-morphism. Then, up to
unitary equivalence, $\phi$ is uniquely determined by a matrix
with entries in $\N_0$.

More precisely, suppose $R=\bigoplus_{i=1}^{p}
\mathbb{M}_{n_i,m_i}$, $S=\bigoplus_{j=1}^q
\mathbb{M}_{k_j,l_j}$ and let $\varphi^j:R\to \M_{k_j,l_j}$ be the compression of $\varphi$ to $\M_{k_j,l_j}$ for $j\in\{1,\ldots,q\}$ so that $\vp=\vp^1\oplus\cdots\oplus\vp^q$. Then there is a  $q\times p$-matrix
$(\alpha_{i,j})$ with entries in $\mathbb{N}_0$ with
$\sum_{j=1}^p\alpha_{i,j}n_j\leq k_i$ and $\sum_{j=1}^p\alpha_{i,j}m_j\leq l_i$
and such that each $\vp^j$ is unitarily equivalent to $\id_1^{\alpha_{1,j}}\oplus\ldots\oplus \id_p^{\alpha_{p,j}}$, where $\id_i$ denotes the identity representation of
$\mathbb{M}_{n_i,m_i}$ for $i=1,\ldots,p$.
\end{prop}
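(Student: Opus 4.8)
The plan is to transport the statement, via the linking-algebra functor $\Li$, to the classical Wedderburn/Bratteli description of (possibly non-unital) $*$-homomorphisms between finite dimensional $C^*$-algebras. Since $\vp=\vp^1\oplus\cdots\oplus\vp^q$ splits along the summands of $S$, and since $\cR(S)=\bigoplus_j\M_{l_j}$ and $\cL(S)=\bigoplus_j\M_{k_j}$, any pair of unitaries implementing a unitary equivalence decomposes blockwise; hence unitary equivalence may be tested one target summand at a time. It therefore suffices to classify a single TRO-morphism $\psi:R\to\M_{k,l}$ into one rectangular block, and afterwards to reassemble the data over $j$ into the matrix $(\alpha_{i,j})$.

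First I would pass to the left and right $C^*$-algebras. Functoriality of $\cL$ and $\cR$ yields $*$-homomorphisms $\cL(\psi):\cL(R)\to\M_k$ and $\cR(\psi):\cR(R)\to\M_l$, where $\cL(R)=\bigoplus_i\M_{n_i}$ and $\cR(R)=\bigoplus_i\M_{m_i}$; equivalently one may view $\Li(\psi):\Li(R)\to\Li(\M_{k,l})=\M_{k+l}$ as a homomorphism respecting the diagonal corners in which the TROs sit. Now I would invoke the structure theorem for representations of finite dimensional $C^*$-algebras: up to unitary equivalence, every $*$-homomorphism $\bigoplus_i\M_{a_i}\to\M_N$ is a block-diagonal sum of $\beta_i$ copies of each identity representation, padded by a zero block, and the multiplicities $\beta_i$ are a complete invariant, constrained only by $\sum_i\beta_i a_i\le N$. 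Applied to $\cL(\psi)$ this gives left multiplicities $\beta_i^\ell$ with $\sum_i\beta_i^\ell\,n_i\le k$, and applied to $\cR(\psi)$ right multiplicities $\beta_i^r$ with $\sum_i\beta_i^r\,m_i\le l$.

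The crux is to show $\beta_i^\ell=\beta_i^r=:\alpha_i$, which is where the ternary product is used. Choosing matrix units $e_{rs}^{(i)}\in\M_{n_i,m_i}$ and tracking $\psi(e_{rs}^{(i)})$ as operators, the fact that $\psi$ intertwines $xy^\ast z$ forces the range (left support) and the row space (right support) of these images to have equal rank per copy of the $i$-th block; a nonzero copy of the identity representation of $\M_{n_i,m_i}$ carries exactly one matched copy on the left and on the right. Consequently $\psi$ is unitarily equivalent to $\id_1^{\alpha_1}\oplus\cdots\oplus\id_p^{\alpha_p}$ together with a zero block, and reassembling over the target summands produces $(\alpha_{i,j})$ and the two families of inequalities; the inequalities rather than equalities are precisely the room taken up by the zero padding, i.e.\ the failure of $\psi$ to be nondegenerate.

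The main obstacle is exactly this reconciliation of the left and right pictures: in the $C^*$-case there is a single action and a single multiplicity, whereas here one must prove that the multiplicity of each block is the same whether read off from $\cL(\psi)$ or from $\cR(\psi)$. Once the matching is in place, uniqueness follows from the rigidity already present in the $C^*$-case (unitary equivalence preserves multiplicities), the implementing unitaries being assembled from the left and right intertwiners of the identity pieces together with an arbitrary rotation of the zero padding; conversely, equal multiplicity matrices visibly yield unitarily equivalent morphisms.
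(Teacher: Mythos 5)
Your proof is correct in outline, but it routes the key step differently from the paper. Both arguments reduce to the classical representation theory of finite dimensional $C^*$-algebras; the difference is where the TRO structure enters. The paper applies the structure theorem once, to the single $*$-homomorphism $\Li(\vp)$ of linking algebras: since each simple summand $\M_{n_i+m_i}$ of $\Li(R)$ carries a single multiplicity, the left and right multiplicities agree automatically, and the only remaining work is to show that the implementing unitary $U_0$ can be chosen to respect the corner decomposition (the off-diagonal blocks $U_{12},U_{21}$ vanish because the image of the $(1,2)$-corner depends only on the variable $x^\ast$), which immediately produces the pair $(V,U)\in\cL(S)\times\cR(S)$. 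You instead apply the structure theorem twice, to $\cL(\psi)$ and $\cR(\psi)$ separately, and then must (i) match the two multiplicity vectors and (ii) merge two independent unitaries into one TRO unitary equivalence. Your matching argument is sound and can be made completely precise in one line: for the matrix unit $e^{(i)}_{11}$ one has $\psi(e^{(i)}_{11})\psi(e^{(i)}_{11})^\ast=\cL(\psi)\bigl(e^{(i)}_{11}(e^{(i)}_{11})^\ast\bigr)$ of rank $\beta^\ell_i$ and $\psi(e^{(i)}_{11})^\ast\psi(e^{(i)}_{11})=\cR(\psi)\bigl((e^{(i)}_{11})^\ast e^{(i)}_{11}\bigr)$ of rank $\beta^r_i$, and $\rank(vv^\ast)=\rank(v^\ast v)$. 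The step you leave thinnest is (ii): after normalizing $\cL(\psi)$ and $\cR(\psi)$ to standard form you still must show the resulting TRO morphism coincides with $\id_1^{\alpha_1}\oplus\cdots\oplus\id_p^{\alpha_p}$ up to a further adjustment; this works because $\psi$ is then determined by the partial isometries $\psi(e^{(i)}_{11})$, whose left and right supports are pinned down, so they differ from the standard ones by unitaries that can be absorbed into $V$ and $U$. Spelling that out would complete your argument; the paper's corner-preservation trick is precisely what lets it avoid this bookkeeping.
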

For the convenience of the reader, we include a proof. We first treat the case of a morphism $\vp:Z_1\to Z_2$
between TRO-factors $Z_{1,2}$.
For the factor $Z_2=\M_{m,n}$, the linking algebra equals $\M_N$, $N=m+n$.
By a well-known result, there is a unitary $U_0\in\M_N$ so that
\begin{equation*}
    \Li\vp\begin{pmatrix}s^\ast s & x^\ast\\ y & tt^\ast\end{pmatrix}=
    \begin{pmatrix}\cR\vp(s^\ast s) & \vp^\ast(x^\ast)\\ \vp(y) & \cL\vp(tt^\ast)\end{pmatrix}=
    U_0\left(\diag_k\left[\begin{pmatrix}s^\ast s & x^\ast\\ y & tt^\ast\end{pmatrix}\right]\oplus 0_{N-2k}\right)U_0^\ast
\end{equation*}
For an appropriately defined permutation matrix $\Pi$,
\begin{equation*}
    \diag_k\begin{pmatrix}s^\ast s & x^\ast\\ y & tt^\ast\end{pmatrix}\oplus 0_{N-2k}=
    \Pi\begin{pmatrix}\diag_k(s^\ast s)\oplus 0_{N-2k}& \diag_k(x^\ast)\oplus 0_{N-2k}\\
                   \diag_k(y)\oplus 0_{N-2k} & \diag_k(tt^\ast)\oplus 0_{N-2k}\end{pmatrix}\Pi^{-1}
\end{equation*}
Let $U=\Pi U_0=\left(\begin{smallmatrix}U_{11} & U_{12}\\ U_{21} & U_{22}\end{smallmatrix}\right)$,
with $U_{11}\in\M_n$, $U_{22}\in\M_m$, $U_{12}\in\M_{nm}$, and $U_{21}\in\M_{mn}$.
Checking dependencies on the variables in
\begin{equation*}
    \begin{pmatrix}\cR\vp(s^\ast s) & \vp^\ast(x^\ast)\\ \vp(y) & \cL\vp(tt^\ast)\end{pmatrix}=
    U\begin{pmatrix}\diag_k(s^\ast s)\oplus 0_{N-2k}& \diag_k(x^\ast)\oplus 0_{N-2k}\\
                   \diag_k(y)\oplus 0_{N-2k} & \diag_k(tt^\ast)\oplus 0_{N-2k}\end{pmatrix}U^\ast
\end{equation*}
shows $U_{12}=U_{21}=0$, and the result follows in this special case. To complete the proof,
it is enough to consider the case in which $q=1$. We may suppose that $\phi$ is injective.
Then its image is a direct sum of sub-TROs $S_i=\phi(\mathbb{M}_{n_i,m_i})$, $i=1,\ldots,p$ of $S$.
Applying the above we find unitaries $V_i\in\mathcal{L}(S_i)$ and $U_i\in\mathcal{R}(S_i)$ so that
$V_0=V_1\oplus\cdots\oplus V_p$ and $U_0=U_1\oplus\cdots\oplus U_p$ provide unitary equivalence
on the sub-TRO level. But each unitary in the left (right) C*-algebra of the sub-TRO of a finite
dimensional TRO can be extended to the whole, and the result in case $q=1$ follows. The general
case now follows in an obvious way.
\begin{defi}
Let $R,S$ be finite dimensional TROs and $Z,W$ finite dimensional JC*-triples.
\begin{description}
\item[(a)]
For a morphism $\vp:R\to S$ we call the matrix
$(\alpha_{ij})\in \M_n(\N_0)$ representing $\vp$ its \emph{multiplicity (matrix)}.
\item[(b)]
Similarly, the \emph{multiplicity (matrix)} of a morphism $\psi:Z\to W$ is the matrix
$(\beta_{ij})\in \M_n(\N_0)$ representing the TRO-morphism $\tau(\psi)$.
\end{description}
\end{defi}

It should be noted that according to the definition of unitary equivalence,
the multiplicity matrices of $\tau(\psi)$ are the same
as those of $\cL(\tau(\psi))$ and $\cR(\tau(\psi))$.

The morphisms that will be important in the following are essentially of two types.
\begin{defi}
We call a morphism between JB*-factors of types I-III of
class $(A)_{k,\ell}$, $k,\ell\in\N$, iff, up to unitary equivalence, it is of the form
\begin{equation*}
    A\longmapsto
    \begin{pmatrix}
     &\diag_k(A) & &0          &\\
     &           &0&           &\\
     &0          & &\diag_\ell(A^\top) &
    \end{pmatrix}
\end{equation*}
and of class $(B)_{k,\eps}$, $\eps=\pm 1$, iff it is unitarily equivalent to
\begin{equation*}
    A\longmapsto
    \begin{pmatrix}
     & 0                   & &\adiag_k(A)&\\
     &                     &0&           &\\
     &\adiag_k(\eps A^\top)& &        0  &
    \end{pmatrix}, \qquad \eps\in\{\pm 1\}
\end{equation*}
where the matrix $\adiag_k(A)$ displays $k$ matrices $A$ on its off-diagonal and is zero
elsewhere.
\end{defi}
In the following theorem we do not treat morphisms mapping
factors of low rank --- Hilbert spaces, spin factors --- into factors of high
rank, and different type, as they do not play a role in the investigation of inductive limits.

\begin{thm}\label{MorphClass}
Let $\vp:Z\to W$ be a non-zero JB*-morphism between Cartan factors $Z$ and $W$.
Then, up to unitary equivalence, the following cases occur.
\begin{description}
\item[(i)]
  If $\vp$ acts between finite dimensional factors of type I-III, where in case of a factor
  of type I Hilbert spaces must be excluded, it is of type
\begin{description}
\item[(A)]
    $(A)_{k,\ell}$ in case $I\to I$ and of type $(A)_{k,0}$ in the remaining 4 cases, $II\to II$, $III\to III$,
    $II\to I$ and $III\to I$.
\item[(B)]
    $(B)_{k,-}$ in case $I\to II, III\to II$, and of type $(B)_{k,+}$ for the cases $I\to III, II\to III$.
\end{description}
Furthermore, the following multiplicities can occur ($\alpha,\beta\in\N$):
\begin{equation*}
\begin{array}{|c|c|c|c|}
  \hline
                      &                      &                  &                  \\
  \longrightarrow     & C^{\textrm{I}}_{M,N} & C^{\textrm{II}}_N&C^{\textrm{III}}_N\\
                      &                      &                  &                  \\ \hline
                      &                      &                  &                  \\
  C^{\textrm{I}}_{m,n}&\begin{pmatrix}\alpha &\beta\\ \beta &\alpha\end{pmatrix}
                                             &\begin{pmatrix}\alpha &\alpha\end{pmatrix}
                                                                &\begin{pmatrix}\alpha &\alpha\end{pmatrix}\\
                      &\alpha m+\beta n\leq M,
                       \alpha n+\beta m\leq N&\alpha(m+n)\leq N &\alpha(m+n)\leq N \\
                  &                      &                  &                  \\ \hline
                  &                      &                  &                  \\
  C^{\textrm{II}}_n&\begin{pmatrix}\alpha\\ \alpha\end{pmatrix}
                                         &\begin{pmatrix}\alpha\end{pmatrix}
                                                            &\begin{pmatrix}2\alpha\end{pmatrix}\\
                  &\alpha n\leq\min\{M,N\}                  & \alpha n\leq N   & 2\alpha n\leq N  \\
                  &                      &                  &                  \\ \hline
                  &                      &                  &                  \\
  C^{\textrm{III}}_n&\begin{pmatrix}\alpha\\ \alpha\end{pmatrix}
                                         &\begin{pmatrix}2\alpha\end{pmatrix}
                                                            &\begin{pmatrix}\alpha\end{pmatrix}\\
                  &\alpha n\leq\min\{M,N\}                  & 2\alpha n\leq N  & \alpha n\leq N  \\
                  &                      &                  &                  \\ \hline
\end{array}
\end{equation*}
\item[(ii)]
  The morphisms between
  \begin{description}
    \item[(a)] Hilbert spaces
    \item[(b)] spin factors
  \end{description}
  are all of multiplicity one. More precisely, in the Hilbertian case, they
  coincide with the isometric embeddings, whereas for a non-zero morphism between spin factors,
  $\vp:Z_1\to Z_2$, there exists an isometric embedding $U:Z_1\to Z_2$ for the canonical Hilbert space structures of $Z_1$ and $Z_2$ with $U(x^\ast )=U(x)^\ast $ for all $x\in Z_1$ and a complex number $\mu\in\C$ with $|\mu|=1$ such that
  \begin{equation*}
  \vp(x)=\mu U(x)\qquad\text{for all $x\in Z_1$.}
  \end{equation*}
\end{description}

\end{thm}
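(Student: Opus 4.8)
The plan is to pull every JB*-morphism up to the universal TRO, classify the lifted TRO-morphism by its multiplicity matrix via Proposition \ref{matrixdarst von Tromorph}, and then cut the admissible multiplicities down to those compatible with the canonical antiautomorphism $\Phi$. The basic observation is that the lift $\tau(\vp):\tau(Z)\to\tau(W)$ intertwines the two canonical antiautomorphisms, i.e. $\Phi_W\circ\tau(\vp)=\tau(\vp)\circ\Phi_Z$. Indeed both sides are anti-homomorphisms $\tau(Z)\to\tau(W)$, and on the generating set $\rho_Z(Z)$ they agree, since $\tau(\vp)\rho_Z=\rho_W\vp$ while $\Phi$ is the identity on the canonical copies of $Z$ and $W$; an anti-homomorphism being determined by its values on a generating set, they coincide on all of $\tau(Z)$. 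For factors of type I--III the copy $\rho_Z(Z)$ is exactly the $\Phi_Z$-fixed point set (universal reversibility), so $\vp$ is recovered as the restriction of $\tau(\vp)$ to the fixed points of $\Phi_Z$, and conversely every $\Phi$-compatible TRO-morphism restricts to a JB*-morphism. This reduces part (i) to classifying $\Phi$-compatible TRO-morphisms up to unitary equivalence.

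For part (i) I would insert the known universal TROs and the explicit form of $\Phi$ from the Background section: $\tau(Z)=\M_{m,n}\oplus\M_{n,m}$ with $\Phi$ the swap-and-transpose for type I, and $\tau(Z)=\M_n$ with $\Phi(\cdot)=(\cdot)^\top$ (type III) resp. $\Phi(\cdot)=-(\cdot)^\top$ (type II). Proposition \ref{matrixdarst von Tromorph} presents the lifted morphism by a multiplicity matrix, and the intertwining relation forces this matrix to be invariant under the permutations of summands induced by $\Phi_Z$ and $\Phi_W$. In the I$\to$I case this yields the symmetric shape $\left(\begin{smallmatrix}\alpha&\beta\\\beta&\alpha\end{smallmatrix}\right)$, and restricting to the fixed points $A\oplus A^\top$ produces exactly the block form $(A)_{\alpha,\beta}$. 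In the cases ending in a single-summand factor, compatibility forces the equal entries displayed in the table; and in the mixed cases II$\leftrightarrow$III it rules out the naive amplification entirely, because a $\diag$-type lift would send symmetric matrices to symmetric ones, never to skew ones. The only surviving lifts are then the anti-diagonal maps $A\mapsto\left(\begin{smallmatrix}0&A\\\eps A^\top&0\end{smallmatrix}\right)$, which are $\Phi$-compatible precisely for the sign $\eps$ matching the target type and which carry twice the multiplicity; this is the origin of the $(B)_{k,\eps}$ forms and of the doubled entries $2\alpha$. The numerical side conditions are read off directly from the two inequalities $\sum_j\alpha_{ij}n_j\le k_i$ and $\sum_j\alpha_{ij}m_j\le l_i$ of Proposition \ref{matrixdarst von Tromorph}.

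For part (ii) the universal TROs are no longer reversible, so a different argument is needed. For Hilbert spaces, which have rank one, the triple product satisfies $\{x,x,x\}=\|x\|^2x$; applying a non-zero morphism $\vp$ to this identity gives $\|x\|^2\vp(x)=\|\vp(x)\|^2\vp(x)$, so $\vp$ is isometric, hence an isometric embedding, and multiplicity one follows. For spin factors I would work with the canonical Hilbert space structure $\langle\cdot,\cdot\rangle$ and involution $x\mapsto x^\ast$, for which the triple product reads $\{x,y,z\}=\langle x,y\rangle z+\langle z,y\rangle x-\langle x,z^\ast\rangle y^\ast$. A direct substitution shows that any map $\vp(x)=\mu U(x)$ with $U$ a $\ast$-preserving isometry and $|\mu|=1$ is a morphism, so the content lies in the converse. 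Using that all spin factors have rank two together with the CAR/Clifford description of $\tau(Z)$, one shows that a non-zero morphism must preserve the inner product up to modulus and intertwine the involutions up to the scalar $\mu$, which pins down the stated form and the multiplicity-one conclusion. I expect this converse for spin factors, together with the careful bookkeeping that extracts the precise sign $\eps$ and the doubled multiplicities in the $(B)$-cases, to be the genuinely delicate points; the remaining steps amount to the systematic translation of Proposition \ref{matrixdarst von Tromorph} through the intertwining relation.
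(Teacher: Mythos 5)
Your overall architecture --- lift to the universal TRO, read off the multiplicity matrix from Proposition~\ref{matrixdarst von Tromorph}, and cut the possibilities down by compatibility with the copy of $Z$ inside $\tau(Z)$ --- is exactly the paper's route; your formulation via the intertwining relation $\Phi_W\circ\tau(\vp)=\tau(\vp)\circ\Phi_Z$ is equivalent (for these universally reversible factors) to the paper's condition $\tau(\vp)(Z)\sub W$, and is in fact proved later in the paper as a separate lemma. The I$\to$I, II/III$\to$I, I$\to$II/III and II$\to$II, III$\to$III cases go through essentially as you describe. But the two places you yourself flag as ``the genuinely delicate points'' are genuine gaps, not bookkeeping, and your sketch does not close them.

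For II$\leftrightarrow$III, your argument that ``a $\diag$-type lift would send symmetric matrices to symmetric ones'' only disposes of the standard representative $A\mapsto\diag_\alpha(A)\oplus 0$; the morphism is only unitarily equivalent to it, i.e.\ of the form $A\mapsto U(\diag_\alpha(A)\oplus 0)K$, and such maps \emph{can} land in the skew matrices (already $a\mapsto\left(\begin{smallmatrix}0&a\\-a&0\end{smallmatrix}\right)$ for $n=1$). So the real issue is to determine for which $\alpha$ suitable $U,K$ exist, and nothing in your text forces $\alpha$ to be even. The paper's argument is: normalize $K$ away by an automorphism $B\mapsto VBV^\top$ of $W$, deduce $U^\top=-U$ by evaluating at $A=E_n$, write $U=(U_{i,j})$ in $n\times n$ blocks and derive $U_{i,j}A=AU_{j,i}$ for all symmetric $A$, hence for all $A$ since the symmetric matrices generate $\M_n$ as a TRO; thus $U_{i,j}=\lambda_{i,j}E_n$ with $(\lambda_{i,j})$ a skew-symmetric unitary, which is impossible in odd size. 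That chain is the content of the doubled entries $2\alpha$, and it is absent from your proposal. Likewise, in the spin case the statement that $\vp$ ``must intertwine the involutions up to the scalar $\mu$'' is precisely what has to be proved: one needs $\vp(x)^\ast=\lambda\vp(x)$ for self-adjoint $x$, which the paper obtains by expanding $\{\vp(x),\vp(x),\vp(x)\}=2\langle\vp(x),\vp(x)\rangle\vp(x)-\langle\vp(x),\vp(x)^\ast\rangle\vp(x)^\ast$ and showing $\langle\vp(x),\vp(x)^\ast\rangle\neq 0$ via a minimal-element argument (otherwise $\vp(x)$, hence $x$ by injectivity, would be minimal, contradicting $x^2\neq 0$); only then can one solve for $\lambda$ and set $\mu^2=\lambda$. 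Your reduction to ``the CAR/Clifford description of $\tau(Z)$'' does not supply this step. The Hilbertian case and the positive direction of the spin case are fine as you present them.
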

The fact that exchanging type II with type III factors leaves the respective results (as, essentially, their proofs)
unchanged is no accident and can be explained by the fact that there is an involutive equivalence of these
subcategories (to be explained in more detail in the next section).

\subsection{Proof of Theorem \ref{MorphClass}}

\paragraph{Case $\mathbf{I\to I}$}
Let $Z$ and $W$ be type I Cartan factors with $\rank Z$, $\rank
W\geq 2$, and suppose that $Z$ and $W$ are embedded in their
universal universal TROs
\[Z=\{(A,A^\top):A\in \M_{n,m}\}\sub \tau(Z)=\M_{n,m}\oplus\M_{m,n}\]
and
\[W=\{(B,B^\top):B\in \M_{N,M}\}\sub \tau(Z)=\M_{N,M}\oplus\M_{M,N}.\]
We know from Proposition \ref{matrixdarst von Tromorph} that the mapping $\tau(\vp):\tau(Z)\to \tau(W)$ is uniquely, up to unitary equivalence, determined by a $2\times 2$ matrix
$\begin{pmatrix}
  \alpha & \beta\\
  \gamma & \delta
\end{pmatrix}$
with entries in $\N_0$. Moreover, by the same result,
\begin{align}\label{Schranke fuer type I}
  &\alpha n+\beta m\leq N,
  &\gamma n+\delta m\leq M,\\
\nonumber  &\alpha m+\beta n\leq M,
  &\gamma m+\delta n\leq N.
\end{align}
and there exist unitaries $U_1,U_2$ and $K_1,K_2$ such that
$$
\tau\left(\vp\right)=
\left(U_1\left(\id_{n,m}^\alpha\oplus\id_{m,n}^\beta\right)K_1,U_2\left(\id_{n,m}^\gamma\oplus\id_{m,n}^\delta\right)K_2\right),
$$
where $\id_{j,k}^\gamma$ denotes the $\gamma$-fold identity
representation of $\M_{j,k}$. Now $\tau\left(\vp\right)\left(Z\right)\sub W$ and therefore
$$
\left(U_1\left(\id_{n,m}^\alpha\left(A\right)\oplus\id_{m,n}^\beta\left(A^\top\right)\right)K_1\right)^\top=
U_2\left(\id_{n,m}^\gamma\left(A\right)\oplus\id_{m,n}^\delta\left(A^\top\right)\right)K_2,
$$
for all $A\in \M_{n,m}$. This yields
\begin{align*}
  \id_{n,m}^\gamma\left(A\right)\oplus\id_{m,n}^\delta\left(A^\top\right)&=
  U_2^\ast K_1^\top\left(\id_{m,n}^\alpha\left(A^\top\right)\oplus\id_{n,m}^\beta\left(A\right)\right)U_1^\top K_2^\ast \\
  &=\widetilde{U}\left(\id_{n,m}^\beta\left(A\right)\oplus\id_{m,n}^\alpha\left(A^\top\right)\right) \widetilde{K},
\end{align*}
for suitable unitaries $\widetilde{U}$ and $\widetilde{K}$. But,
since $Z$ generates $\tau(Z)$ as a TRO, this can only be
true if $\alpha=\delta$ and $\beta=\gamma$. On the other hand, each of the TRO-morphisms with this multiplicity
stems from a JC*-morphism, uniquely determined up to unitary equivalence.

\paragraph{Case $\mathbf{II,III\to I}$}
If $Z$ is of type II (resp.\ type III ), and $W=C^{\textrm{I}}_{N,M}$ rectangular,
$N,M\geq2$, then $\tau(\vp):\tau(Z)\to\tau(W)$ is, up to unitary equivalence, uniquely determined by a
$2\times 1$-matrix
$\left(\begin{smallmatrix}
  \alpha\\
  \beta
\end{smallmatrix}\right).$
In order that $\tau(\vp)Z\sub W$, only $\alpha=\beta$ is possible, and $0\leq n\alpha\leq \min\{M,N\}$.
But each of these mappings comes from a morphism of the underlying JB*-triples.

\paragraph{Case $\mathbf{I\to II,III}$}
We treat the case $I\to III$ only; the missing case is proven almost identically.
Suppose that
\[
Z=\{(A,A^\top):A\in \M_{n,m}\}\sub
\tau(Z)=\M_{n,m}\oplus\M_{m,n},
\]
for suitable $n,m\in\N$, and, respectively,
\[
W=\{A\in\M_N:A^\top=A\}\sub \tau(W)=\M_N,
\]
for suitable $N\in\N$. If $\vp:Z\to W$ is a JB*-triple morphism,
then $\tau(\vp)$ is, up to unitary
equivalence, uniquely determined by a $1\times2$ matrix
$\begin{pmatrix}\alpha & \beta\end{pmatrix}$ with $\alpha,\beta\in\N_0$
and $0\leq\alpha n+\beta n\leq N$. Consequently, there exist
unitary matrices $U$ and $K$ such that
\begin{equation*}
\tau(\vp)=U(\id_{n,m}^\alpha\oplus\id_{m,n}^\beta)K,
\end{equation*}
We must have
\begin{equation*}
\tau(\vp)((A,A^\top))=U\left(\diag_\alpha(A)\oplus\diag_\beta(A^\top)\oplus 0_{N-\alpha-\beta}\right)K\in W,
\end{equation*}
for all $A\in \M_{n,m}$, and so $(\tau(\vp)((A,A^\top)))^\top=\tau(\vp)((A,A^\top))$, or
\begin{equation*}
  U\left(\diag_\alpha(A)\oplus\diag_\beta(A^\top)\oplus 0_{N-\alpha-\beta}\right)K
  =\widetilde{U}\left(\diag_\beta(A)\oplus\diag_\alpha(A^\top)\oplus 0_{N-\alpha-\beta}\right)\widetilde{K}
\end{equation*}
with unitary matrices $\widetilde{U}$ and $\widetilde{K}$. Since
$\M_{n,m}\oplus \M_{m,n}$ is generated by $Z=\{(A,A^\top):A\in
\M_{n,m}\}$, we must have $\alpha=\beta$ with $\alpha(m+n)\leq N$.
For appropriately chosen permutation
matrices $\Pi_{1,2}$, and unitaries $U,V$ such that
$\tau\vp(A,B)=U\left(\diag_\alpha(A)\oplus 0_{n-2\alpha}\oplus\diag_\alpha(A)\right)V$
we have
\begin{equation*}
    \vp(A)=\tau\vp(A,A^\top)=U\Pi_1
    \begin{pmatrix}
     & 0                   & &\adiag_k(A)&\\
     &                     &0&           &\\
     &\adiag_k(A^\top)    & &        0  &
    \end{pmatrix}
    \Pi_2 V.
\end{equation*}

\paragraph{Cases $\mathbf{II\to II}$ and $\mathbf{III\to III}$}
Let $Z$ and $W$ be type II Cartan factors
realized as
\[Z=\{A:A\in \M_n, A^\top=-A\}\sub \tau(Z)=\M_n\]
and
\[W=\{B:B\in \M_N, B^\top=-B\}\sub \tau(W)=\M_N.\]
for $n,N\in\N$. If $\vp:Z\to W$ is a JB*-triple morphism, then $\vp$ is, up
to unitary equivalence, uniquely determined by its multiplicity matrix
$\begin{pmatrix}\alpha \end{pmatrix}$ with
$0\leq\alpha\leq\frac{N}{n}$ and $\alpha\in\N_0$. Note that the restriction
of the standard form of $\tau\vp$ already takes takes values in $W$.
The argument in the case $III\to III$ is identical.

\paragraph{Cases $\mathbf{III\to II}$ and $\mathbf{II\to III}$}
Let, for $n,N\in\N$, $Z$ and $W$ be realized as
\[Z=\{A:A\in \M_n, A^\top=A\}\sub \tau(Z)=\M_n\]
and
\[W=\{B:B\in \M_N, B^\top=-B\}\sub \tau(W)=\M_N,\]
respectively.
Then, any JB*-morphism $\vp: Z\to W$ is again determined by a
$1\times1$-matrix $\begin{pmatrix}\alpha\end{pmatrix}$ with $0\leq \alpha\leq\frac{N}{n}$.

We first assume that $N=\alpha n$ and fix unitary matrices $U$ and $K$ such that $\vp(A)=U\id^\alpha(A) K$ for every $A\in Z$. Applying an automorphism of $W$ of the form $\psi(B)=VBV^\top$, $V$ unitary, we may assume that $\vp=U\id^\alpha$,
for a unitary matrix $U$. As $\vp(A)$ is skew-symmetric for every symmetric matrix $A$, we find, letting $A=E_n$,
$U^\top=-U$.
Write $U=(U_{i,j})$ with $U_{i,j}\in \M_n$. Then, for every symmetric matrix $A$,
\[
(U_{i,j}A)=(-U_{i,j}A)^\top=(AU_{j,i}).
\]
Thus $U_{i,j}A=AU_{j,i}$ for every $n\times n$-matrix $A$, since the symmetric matrices generate $\M_n$ as a TRO. Consequently, $U_{i,j}=\lambda_{i,j}E_n$, where necessarily $\lambda_{i,i}=0$ and $\lambda_{i,j}=-\lambda_{j,i}$. It follows in particular that $\alpha$ is even, and so
\begin{equation*}
    \vp(A)=U\id^\alpha(A)K
    =U'\left(\adiag_{N/2}(-A)\oplus\adiag_{N/2}(A)\right)K,
\end{equation*}
for an appropriately chosen unitary $U'$.

Next we assume that $\alpha n<N$. Then there exists a unitary $U$ such that
$$
\vp(A)=U\left(\diag_k(A)\oplus 0_{N-k}\right).
$$
With a similar reasoning as before we find that
$$
U=\begin{pmatrix}
  U' &0\\
  0 & B
\end{pmatrix},$$
where $U'$ and $B$ are unitary matrices and $U'=(\lambda_{i,j}E_n)$ with $\lambda_{i,j}=-\lambda_{j,i}$. We can now easily deduce that $\alpha$ has to be even and
proceed as before.


The case $III\to II$ is very similar and therefore omitted.

\paragraph{Case $\mathbf{IV\to IV}$}
Let $Z_1$ and $Z_2$ be spin factors and denote by $\chi(Z_1)$ the selfadjoint part of $Z_1$.
We start by showing that $\vp$ is unitary with respect to the underlying Hilbertian structure. In fact,
when embedded into its universal TRO, the associated scalar product is defined through
\begin{equation*}
    ST^\ast+T^\ast S=2\langle S,T\rangle\id
\end{equation*}
and hence is the restriction to $Z$ of
\begin{equation*}
    \langle S,T\rangle=\tr(ST^\ast ),
\end{equation*}
where $\tr$ denotes the normalized trace. Then, as $\tau(\vp)$
is unitarily equivalent to a multiple of the identity map, this scalar product must be left invariant under $\tau(\vp)$,
and so $\vp$ has to be an isometric embedding with respect to the Hilbertian structure.
For the reminder of the proof we follow the method developed in \cite[Section 2]{HervesandIsidro-Isometriesandautomorphismsofthespacesofspinors}.

Since $Z_1$ and $Z_2$ are factors, $\vp$ is injective. We will first prove that there exists a complex number $\lambda$, $|\lambda|=1$ with
$$\vp(x)^\ast =\lambda\vp(x)$$
for all $x\in \chi(Z_1)$.
For every non-zero $x\in\chi(Z_1)$, $\{x,x,x\}=\langle x,x\rangle x$, and so, $\vp(\{x,x,x\})=\langle x,x\rangle \vp(x)$.
On the other hand,
\begin{equation*}
\vp(\{x,x,x\})=\{\vp(x),\vp(x),\vp(x)\}
=2\langle \vp(x),\vp(x)\rangle \vp(x)-\langle \vp(x),\vp(x)^\ast \rangle \vp(x)^\ast.
\end{equation*}
The scalar product $\langle \vp(x),\vp(x)^\ast \rangle $ is non-zero, because otherwise we could conclude that $2\vp(x)^2=\langle \vp(x),\vp(x)^\ast \rangle =0$, which would show that $\vp(x)$ is a minimal element by \cite[Proposition 1]{Harris-AnalyinvariantsandtheSchwarzPickinequality}, i.e.\ for every $z\in Z_2$ exists a $\mu\in \C$ such that $\{x,z,x\}=\mu x$). Since $\vp$ is injective, $x$ would be minimal and therefore $xx^\ast =x^2=0$, too. This is a contradiction since $x\neq0$.
Consequently, we can divide by $\langle \vp(x),\vp(x)^\ast \rangle$, and if we put
\[
\lambda:=\frac{2\langle\vp(x),\vp(x)\rangle -\langle x,x\rangle}{\langle \vp(x),\vp(x)^\ast \rangle }
\]
we obtain
\[
\vp(x)^\ast =\lambda\vp(x).
\]
That $\lambda$ does not depend on $x$ can be seen by standard arguments.
Now we choose $\mu$ with $\mu^2=\lambda$ and define $U:=\mu\vp$ . For every $x\in\chi(Z_1)$ we have \[
U(x)=\mu\vp(x)=\mu\overline{\lambda}\vp(x)^\ast =(\mu\vp(x))^\ast =U(x)^\ast
\]
and therefore
$U(z^\ast )=U(z)^\ast $
for every $z\in Z_1$.

Since the rank of the Cartan factor $C^{\textrm{I}}_{1,n}$ is equal to $1$ for
all $n\in\N$, there cannot be any non-zero JB*-triple
morphisms from Cartan factors with rank greater or equal to $2$
to $C^{\textrm{I}}_{1,n}$.

\paragraph{The Hilbertian case} is left to the reader.

\begin{defi}
We will say that a morphism between finite dimensional JC*-triples is of standard form iff it is
one of the morphisms appearing in Theorem~\ref{MorphClass}.
\end{defi}

\begin{cor}\label{K-morph-table}
Let  $Z_1$ and $Z_2$ be Cartan factors of type I, II, or III.
Then for each map $\Phi: K_0(Z_1)\to K_0(Z_2)$ and $\alpha,\beta\in\N_0$ satisfying the
following conditions, there is, up to unitary equivalence, exactly one JB*-morphism $\vp:Z_1\to\Z_2$
with multiplicities $\alpha,\beta$.
\begin{equation*}
\begin{array}{|c|c|c|c|}
  \hline
                 &                      &                  &                  \\
  \longrightarrow& K_0(C^{\textrm{I}}_{M,N})=\Z\oplus\Z & K_0(C^{\textrm{II}}_N)=\Z & K_0(C^{\textrm{III}}_N) =\Z \\
                 &                      &                  &                  \\ \hline
                 &                      &                  &                  \\
  K_0(C^{\textrm{I}}_{m,n}) &(k_1,k_2)\mapsto      & (k_1,k_2)\mapsto & (k_1,k_2)\mapsto \\
  =\Z\oplus\Z    &(\alpha k_1+\beta k_2,
                   \beta k_1+\alpha k_2)&  \alpha(k_1+k_2) &  \alpha(k_1+k_2) \\
                 &\alpha m+\beta n\leq M,
                  \alpha n+\beta m\leq N&\alpha(m+n)\leq N &\alpha(m+n)\leq N \\
                 &                      &                  &                  \\ \hline
                 &                      &                  &                  \\
  K_0(C^{\textrm{II}}_n) =\Z & k\mapsto(\alpha k,\alpha k) &k\mapsto\alpha k  &k\mapsto 2\alpha k\\
                 &\alpha n\leq\min\{M,N\}                  & \alpha n\leq N   & 2\alpha n\leq N  \\
                 &                      &                  &                  \\ \hline
                 &                      &                  &                  \\
  K_0(C^{\textrm{III}}_n) =\Z & k\mapsto(\alpha k,\alpha k)&k\mapsto 2\alpha k&  k\mapsto\alpha k\\
                 &\alpha n\leq\min\{M,N\}                  & 2\alpha n\leq N  & \alpha n\leq N   \\
                 &                      &                  &                  \\ \hline
\end{array}
\end{equation*}
\end{cor}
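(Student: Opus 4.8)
The plan is to obtain the corollary as the $K_0$-shadow of Theorem~\ref{MorphClass}, by passing to the left C*-algebras of the universal TROs and invoking the elementary computation of $K_0$ for finite-dimensional C*-algebras. First I would record, from the description of $\tau$ in Section~2 and the identification of left C*-algebras as closed linear spans of the operators $x\mapsto zy^\ast x$, that $\mathcal{L}(\M_{m,n}\oplus\M_{n,m})=\M_m\oplus\M_n$ for type~I and $\mathcal{L}(\M_N)=\M_N$ for types~II and~III. Taking $K_0$ reproduces the groups $\Z\oplus\Z$ and $\Z$ displayed along the margins of the table, and, since $K_0^{\mathrm{JB}^\ast}(\vp)=K_0^{\mathrm{TRO}}(\tau(\vp))=K_0(\mathcal{L}(\tau(\vp)))$, the induced map is just the $K_0$-map of a $\ast$-homomorphism between finite-dimensional C*-algebras.

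The existence of a morphism for every admissible pair $\alpha,\beta$, as well as its uniqueness up to unitary equivalence, is exactly the content of Theorem~\ref{MorphClass}: each admissible multiplicity matrix there is realized by a unique standard-form morphism, and, by definition of unitary equivalence for JB*-morphisms, by a unique $\vp$ up to unitary equivalence. Thus the only thing to be checked is the explicit shape of the $K_0$-maps. For this I would isolate the single principle doing all the work: a TRO-morphism with multiplicity matrix $(\alpha_{ij})$ induces on the left C*-algebras $\bigoplus_i\M_{n_i}\to\bigoplus_j\M_{k_j}$ a homomorphism whose block multiplicities are again the $\alpha_{ij}$; since an inclusion of matrix algebras of multiplicity $a$ acts on $K_0=\Z$ as multiplication by $a$, the induced map on $K_0$ is given by the matrix $(\alpha_{ij})$ itself. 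By the remark following the definition of the multiplicity matrix, this coincides with the multiplicity matrix of $\tau(\vp)$.

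It then remains to feed the multiplicity matrices from Theorem~\ref{MorphClass} into this principle, entry by entry. In the case $I\to I$ the two summands of $\tau$ together with the matrix $\left(\begin{smallmatrix}\alpha&\beta\\ \beta&\alpha\end{smallmatrix}\right)$ give $(k_1,k_2)\mapsto(\alpha k_1+\beta k_2,\beta k_1+\alpha k_2)$; for $II\to I$ and $III\to I$ the single source summand enters both target summands with the same multiplicity $\alpha$, producing $k\mapsto(\alpha k,\alpha k)$; and $II\to II$, $III\to III$ give $k\mapsto\alpha k$ directly. The one genuinely delicate pair is $II\to III$ and $III\to II$: here the computation in the proof of Theorem~\ref{MorphClass} forces the number of identity copies in $\tau(\vp)$ to be even, so the effective multiplicity is $2\alpha$ and $K_0$ becomes $k\mapsto2\alpha k$. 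The dimension constraints $\alpha m+\beta n\leq M$, $\alpha(m+n)\leq N$, $2\alpha n\leq N$, and so on, are carried over verbatim from Theorem~\ref{MorphClass}.

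I expect the main obstacle to be bookkeeping rather than conceptual: keeping track of which summand of the type~I universal TRO $\M_{m,n}\oplus\M_{n,m}$ a given block lands in once the transpose built into class $(A)_{k,\ell}$ and the anti-diagonal built into class $(B)_{k,\eps}$ are accounted for, and making sure the even-multiplicity phenomenon in the II/III interchange surfaces precisely as the factor $2$ and is not double-counted. Since every $K_0$-entry in the table is symmetric in the two type~I summands, however, no transpose ambiguity survives into the final formulas.
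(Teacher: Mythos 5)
Your proposal is correct and follows essentially the same route as the paper: reduce to standard form (unitary equivalence only changes the induced map on left C*-algebras by an inner automorphism, hence acts trivially on $K_0$), observe that the multiplicity matrix of $\tau(\vp)$ coincides with that of $\cL(\tau(\vp))$ so that $K_0(\vp)$ is given by that matrix, and then read the entries off Theorem~\ref{MorphClass}, with existence and uniqueness also supplied by that theorem. The paper merely illustrates one case ($I\to III$) explicitly where you state the general multiplicity principle once and apply it uniformly, but the substance is identical.
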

\begin{proof}
The proof of this result essentially consists in a calculation of $K_0(\vp)$ for the different cases. Note that unitary equivalence induces inner automorphisms on the left C*-algebra of the universal TRO,
so that in order to calculate the action of morphisms between K-groups, we may suppose that all of them are in standard
form.
We illustrate these calculations in the case where $\vp$ maps a rectangular factor $Z$ into a hermitian one, $W$.
In this case, the map
\begin{equation*}
\tau(\vp):\tau(Z)=\M_{m,n}\oplus\M_{n,m}\to\tau(W)=\M_N
\end{equation*}
is
\begin{equation*}
(A,B)\longmapsto
\begin{pmatrix}
    &        &   &   &   &       & A\\
    &        &   &   &   &\iddots &\\
    &        &   &   & A &       &\\
    &        &   & 0 &   &       &\\
    &        &B  &   &   &       &\\
    &\iddots &   &   &   &       &\\
  B &        &   &   &   &       &
\end{pmatrix},
\end{equation*}
with $\alpha$ repetitions of $A$ and $B$, respectively.
For $\cL(\tau(\vp)):\M_m\oplus\M_n\to\M_N$, we have
\begin{multline*}
   \cL(\tau(\vp))(S,T)=\cL(\tau(\vp))\left(\sum S_iS_i^\ast ,\sum TT_i^\ast \right)=\\
   =\left(\sum \tau(\vp)(S_i)\tau(\vp)(S_i)^\ast ,\sum \tau(\vp)(T)\tau(\vp)(T_i)^\ast \right)
   =\diag_\alpha\left(S,\ldots,S\right)\oplus 0\oplus\diag_\alpha\left(T,\ldots,T\right),
\end{multline*}
giving
\begin{equation*}
    K_0(\vp)(k_1,k_2)=\alpha(k_1+k_2),\qquad 2\alpha\leq N
\end{equation*}
Once these calculations are done, the data obtained suffice to unambiguously identify the underlying
JB*-morphism.
\end{proof}

\section{A complete isomorphism invariant}

\paragraph{The canonical involution}
On a TRO $T$, denote the operators $z\mapsto xy^*z$ and $z\mapsto zy^*x$ by $L_{xy^*}$ and $R_{y^*x}$,
respectively. For $T^{\op}$, we use the notation $L^{\op}_{xy^*}$ and $R^{\op}_{y^*z}$. Since $L^{\op}_{xy^*}=R_{y^*x}$
and $L^{\op}_{vw^*}L^{\op}_{xy^*}=R_{w^*v}R_{y^*x}$, as operators on the vector space $T$,
we may identify $\mathcal{L}(T^{\op})$ with $\mathcal{R}(T)$.
If we consider the canonical antiautomorphism $\Phi$ of $\tau(Z)$ as an isomorphism between $\tau(Z)$ and $\tau(Z)^{\op}$
we obtain an isomorphism
\begin{equation*}
  \mathcal{L}(\Phi):\mathcal{L}(\tau(Z))\to\mathcal{L}(\tau(Z)^{\op})=\mathcal{R}(\tau(Z)).
\end{equation*}
At the level of K-theory, we will compare this map to the \emph{Morita isomorphism}: Fix a TRO $T$.
Denote by $\iota_{\mathcal{L}}$ and $\iota_{\mathcal{R}}$ the canonical embeddings of $\cL(T)$ and $\cR(T)$ into
$\Li(T)$, respectively. Then, for a (separable) TRO $T$ the mapping
\begin{equation*}
  \eta_T:=K_0(\iota_{\mathcal{L}})^{-1}\circ K_0(\iota_{\mathcal{R}}):K_0(\mathcal{R}(T))\to K_0(\mathcal{L}(T))
\end{equation*}
is an isomorphism between left and right $K_0$-groups of $T$.
See e.g.\ \cite[Section 2]{BoWe2}.

\begin{defi}
Let $Z$ be a separable JC*-triple, and denote by $\Phi$ the canonical isomorphism $\tau(Z)\to\tau(Z)^{\op}$.
We call the conjugate linear, self-adjoint isomorphism
\begin{equation*}
    \sigma_Z=\eta_{\tau(Z)}K_0\left(\mathcal{L}(\Phi)\right)
\end{equation*}
the \emph{canonical involution} of $K_0(Z)$.
\end{defi}

\begin{lemma}\label{calc canonical involution}
Suppose $Z$ is a JB*-factor. Then the canonical involution is the identity in case
$Z$ is symplectic or hermitian, and, for the rectangular, non-Hilbertian, factor,
\begin{equation*}
    \sigma_Z=\Z\oplus\Z\to\Z\oplus\Z,\qquad (u,v)\mapsto (v,u).
\end{equation*}
\end{lemma}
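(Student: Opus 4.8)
The plan is to compute $\sigma_Z = \eta_{\tau(Z)} K_0(\mathcal{L}(\Phi))$ explicitly for each of the three factor types by tracking what the canonical antiautomorphism $\Phi$ does at the level of left C*-algebras and then composing with the Morita isomorphism $\eta_{\tau(Z)}$. The key inputs are already assembled in the excerpt: the explicit description of $\Phi$ on each universal TRO, the computation of the $K_0$-groups, and the fact that $\eta_T$ is an isomorphism between the right and left $K_0$-groups. So the proof is essentially a case-by-case calculation, and I would organize it by the three factor types.

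**The hermitian and symplectic cases.** For these, $\tau(Z) = \M_N$ and $\Phi$ sends a matrix to its (negative) transpose. First I would observe that since $\tau(Z) = \M_N$ is already a C*-algebra, both $\mathcal{L}(\tau(Z))$ and $\mathcal{R}(\tau(Z))$ are essentially $\M_N$ (with the right algebra the opposite), and the Morita isomorphism $\eta_{\tau(Z)}$ identifies $K_0(\mathcal{R}) \cong \Z$ with $K_0(\mathcal{L}) \cong \Z$ as the identity on $\Z$, because the embeddings $\iota_{\mathcal{L}}, \iota_{\mathcal{R}}$ into the linking algebra $\M_{2N}$ both send a minimal projection to a minimal projection. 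Then I would note that $\mathcal{L}(\Phi)$, being induced by (negative) transposition, is an automorphism of $\M_N$ that preserves the rank of projections, hence induces the identity on $K_0(\M_N) = \Z$. The sign in the symplectic case is irrelevant at the level of $\mathcal{L}(\Phi)$, since $\mathcal{L}$ is built from products $xy^\ast$ and the signs cancel. Composing, $\sigma_Z = \id$. The only subtlety is that $\sigma_Z$ is conjugate-linear by definition, but on $\Z$ this is immaterial.

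**The rectangular case.** Here $\tau(Z) = \M_{m,n} \oplus \M_{n,m}$ and $\Phi(A \oplus B) = B^\top \oplus A^\top$, so the essential feature is that $\Phi$ \emph{swaps the two summands} (up to transposition within each). I would compute $\mathcal{L}(\tau(Z)) = \M_m \oplus \M_n$ with $K_0 = \Z \oplus \Z$, and track how $\mathcal{L}(\Phi)$ acts: since $\Phi$ carries the first summand of $\tau(Z)$ to the second and vice versa, the induced map on left C*-algebras interchanges the two matrix blocks, and on $K_0 = \Z \oplus \Z$ this is the coordinate flip $(u,v) \mapsto (v,u)$. I would then check that $\eta_{\tau(Z)}$ acts as the identity on each $\Z$-coordinate (again because the minimal projections in left and right algebras match up through the linking algebra), so that composing leaves the flip intact and gives $\sigma_Z(u,v) = (v,u)$.

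**Anticipated obstacle.** The main delicate point is pinning down the Morita isomorphism $\eta_{\tau(Z)}$ precisely enough to be sure it does not itself introduce a permutation or sign that would cancel or combine with the contribution of $\mathcal{L}(\Phi)$. The identifications $\mathcal{L}(T^{\op}) = \mathcal{R}(T)$ and the bookkeeping of which summand of $\mathcal{L}(\tau(Z))$ corresponds to which summand of $\mathcal{R}(\tau(Z))$ under $\eta$ must be done carefully, since both $\Phi$ and $\eta$ involve a left-right switch and it is easy to double-count. I would resolve this by writing out the embeddings $\iota_{\mathcal{L}}, \iota_{\mathcal{R}}$ into $\Li(\tau(Z))$ on explicit rank-one generators and verifying directly that $\eta_{\tau(Z)}$ is the identity in suitable bases, so that the swap in the rectangular case comes solely from $\mathcal{L}(\Phi)$ and survives.
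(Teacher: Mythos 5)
Your proposal is correct and follows essentially the same route as the paper's own proof: in the rectangular case the paper likewise computes $\mathcal{L}(\Phi)(A,B)=(B^\top,A^\top)$ on $\M_m\oplus\M_n$, writes out the embeddings $\iota_{\mathcal{L}},\iota_{\mathcal{R}}$ into the linking algebra explicitly to verify $\eta_{\tau(Z)}=\id_{\Z^2}$, and concludes that the coordinate flip comes solely from $\mathcal{L}(\Phi)$. The hermitian and symplectic cases are dispatched in the paper even more quickly (the $K_0$-group is $\Z$), so your rank-preservation argument there is only a more detailed version of the same observation.
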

\begin{proof}
As the $K_0$-groups of the symplectic and hermitian factors are $\Z$, only the case of a factor $Z=\M_{m,n}$
requires a proof. In this case, $\tau(Z)=\M_{m,n}\oplus\M_{n,m}$, $\cL(\tau(Z))=\M_m\oplus\M_n$,
$\cR(\tau(Z))=\M_n^{\op}\oplus\M_m^{\op}$, and
\begin{equation*}
    \Li(Z)=\begin{pmatrix}\M_m\oplus\M_n         & \M_{m,n}\oplus\M_{n,m} \\
                          \M_{n,m}\oplus\M_{m,n} & \M_n^{\op}\oplus\M_m^{\op}
             \end{pmatrix}\cong
           \begin{pmatrix}\M_m     & \M_{m,n} \\
                          \M_{n,m} & \M_n^{\op}
             \end{pmatrix}\oplus
           \begin{pmatrix}\M_n     & \M_{n,m} \\
                          \M_{m,n} & \M_m^{\op}
             \end{pmatrix}.
\end{equation*}
Since the canonical involution $\Phi$ on $\tau(Z)$ is given by $\Phi(A\oplus B)=(B^\top\oplus A^\top)$ we obtain for
\begin{equation*}
\mathcal{L}(\Phi):\M_m\oplus\M_n\to\cL\left(\left(\M_{m,n}\oplus\M_{n,m}\right)^{\op}\right)=\M_n^{\op}\oplus\M_m^{\op}
\end{equation*}
that
\begin{equation*}
\mathcal{L}(\Phi)(A,B)=(B^\top,A^\top)
\end{equation*}
 and $K_0\left(\mathcal{L}(\Phi)\right)(x\oplus y)=y\oplus x$.
Furthermore, the mappings
\begin{equation*}
\iota_{\cL}:M_m\oplus M_n\to
\left(\begin{smallmatrix}\M_m &\M_{m,n}\\ \M_{n,m} &\M_n^{\op}\end{smallmatrix}\right)\oplus
\left(\begin{smallmatrix}\M_n &\M_{n,m}\\ \M_{m,n} &\M_m^{\op}\end{smallmatrix}\right)
\end{equation*}
and
\begin{equation*}
\iota_{\cR}:M_n\oplus M_m\to
\left(\begin{smallmatrix}\M_m &\M_{m,n}\\ \M_{n,m} &\M_n^{\op}\end{smallmatrix}\right)\oplus
\left(\begin{smallmatrix}\M_n &\M_{n,m}\\ \M_{m,n} &\M_m^{\op}\end{smallmatrix}\right)
\end{equation*}
 are given by
\begin{equation*}
    \iota_{\cL}(p\oplus q)=\begin{pmatrix}p & 0\\0 & 0\end{pmatrix}\oplus \begin{pmatrix}q & 0\\0 & 0\end{pmatrix}
    \qquad\text{as well as}\qquad
    \iota_{\cR}(r\oplus s)=\begin{pmatrix}0 & 0\\0 & r\end{pmatrix}\oplus \begin{pmatrix}0 & 0\\0 & s\end{pmatrix},
\end{equation*}
and we find $\eta_Z=\id_{\Z^2}$.
\end{proof}

It is also not difficult to calculate the canonical involution in the Hilbertian and spin case, respectively,
but we won't need that here.

\paragraph{Special subsets of the $K_0$-group. The invariant.}
In order to obtain an isomorphism invariant for finite dimensional JB*-triples, K-groups have to be
furnished with additional structure. We will use here a variant which accomplishes the classification of
the most complicated part of the inductive limit of finite dimensional JB*-triples. Besides the canonical
involution, this invariant consists of three distinguished subsets of $K_0(Z)$, defined in the following way.

\begin{defi}
Denote by $\Sigma(Z)$ the subset of $K_0(Z)$ consisting of all equivalence classes coming from projections
in $\cL(\tau(Z))$.
This subset will be called \emph{the scale of $Z$}.
\end{defi}

 A subset of $\Sigma(Z)$ is obtained as follows.

\begin{defi}
Suppose that $Z$ is embedded into its universal TRO, and fix a tripotent $u\in Z$.
Since $u$ also is tripotent with respect to the
TRO-product, the element $\rho_Z(u)\rho_Z(u)^\ast$ is a projection in the left
C*-algebra of $TRO(Z)$, and we denote the set of the equivalence classes in $K_0(Z)$
which arise in this way by $\Delta^+(Z)$.
\end{defi}

As $\rho_Z(u)^\ast\rho_Z(u)$
is equivalent in $\Li(\tau(Z))$ to $\rho_Z(u)\rho_Z(u)^\ast$, no new information is gained from
using right C*-algebras.

\begin{defi}
Let $Z$ be a
JC*-triple with canonical antiautomorphism $\Phi:\tau(Z)\to\tau(Z)$.
We
denote by $Z^-$ the JC*-triple
\begin{equation*}
    Z^-=\set{z\in\tau(Z)}{\Phi(z)=-z},
\end{equation*}
and let $\Delta^-(Z)=\Delta^+(Z^-)$.
\end{defi}

We are ready for

\begin{defi}
The $K^{\pm}$-invariant of a JB*-triple $Z$ is the quintuple
\begin{equation*}
    K^\pm_0(Z)=(K_0(Z),\Sigma(Z),\Delta^+(Z),\Delta^-(Z),\sigma_Z)
\end{equation*}
\end{defi}

\begin{prop}
For the finite dimensional universally reversible factors,
\begin{gather*}
    K^\pm_0(C_n^{\textrm{III}}) = \left(\Z,\{0,1,\ldots,n\},\{0,2,4,\ldots,k\},\{0,1,2,\ldots,n\},\id\right)\\
    K^\pm_0(C_n^{\textrm{II}}) = \left(\Z,\{0,1,\ldots,n\},\{0,1,2,\ldots,n\},\{0,2,4,\ldots,k\},\id\right)
\end{gather*}
and,
with $k$ being the largest even number such that $k\leq n$ and $\mu=\min\{m,n\}$,
\begin{multline*}
    K^\pm_0(C_{m,n}^{\textrm{I}}) =\left(\Z^2,\{0,1,\ldots,m\}\times\{0,1,\ldots,n\},\right.\\
      \left.\{(0,0),(1,1),\ldots,(\mu,\mu)\},\{(0,0),(1,1),\ldots,(\mu,\mu)\},\sigma:(s,t)\mapsto (t,s)\right).
\end{multline*}
\end{prop}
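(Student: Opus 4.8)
The plan is to read the five ingredients of the $K^\pm$-invariant directly off the structural data for $\tau(Z)$ and $\Phi$ collected in Section~2. In all three cases the universal TRO is a sum of full matrix algebras: $\tau(C^{\textrm{II}}_n)=\tau(C^{\textrm{III}}_n)=\M_n$ with the natural embedding $\rho_Z$, while $\tau(C^{\textrm{I}}_{m,n})=\M_{m,n}\oplus\M_{n,m}$ with $A\mapsto(A,A^\top)$. Hence $\cL(\tau(Z))$ is $\M_n$ in the first two cases and $\M_m\oplus\M_n$ in the rectangular one, so $K_0(Z)=\Z$, resp.\ $\Z^2$. Because a projection in a full matrix algebra is classified up to equivalence by its rank, the scale $\Sigma(Z)$—the classes of projections in $\cL(\tau(Z))$—is $\{0,1,\dots,n\}$, resp.\ $\{0,1,\dots,m\}\times\{0,1,\dots,n\}$, matching the displayed sets. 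The involution $\sigma_Z$ is handed to us by Lemma~\ref{calc canonical involution}: the identity for the symplectic and hermitian factors, and the flip $(u,v)\mapsto(v,u)$ for the rectangular factor.

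It then remains to compute the two marked subsets. For $\Delta^+(Z)$ I would range over the tripotents $u\in Z$ and take the $K_0$-class of the support projection $\rho_Z(u)\rho_Z(u)^\ast\in\cL(\tau(Z))$, which is simply its rank. Since a tripotent of a JC*-triple is exactly a partial isometry lying in $Z$, the task reduces to deciding which ranks are attained by the partial isometries in each defining symmetry class. For the symmetric (hermitian) class the Autonne--Takagi factorisation $u=V(I_r\oplus 0)V^\top$ shows that every rank $r\in\{0,\dots,n\}$ occurs, with $\id^{\,r}\oplus 0$ realising it explicitly. For the alternating (symplectic) class the decisive structural fact is that every skew-symmetric matrix has even rank, so only even classes can appear; the block sum of $r/2$ copies of $\bigl(\begin{smallmatrix}0&1\\-1&0\end{smallmatrix}\bigr)$ then realises each even rank up to the largest even $k\le n$. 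This parity dichotomy—symmetric partial isometries fill $\{0,1,\dots,n\}$, alternating ones fill $\{0,2,\dots,k\}$—is the only genuinely non-formal point, and is what separates the two factors.

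For the rectangular factor a tripotent is a partial isometry $A\in\M_{m,n}$ of arbitrary rank $r\le\mu:=\min\{m,n\}$, and $(A,A^\top)(A,A^\top)^\ast=(AA^\ast,\,\overline{A^\ast A})$ has rank $r$ in each of its two blocks; hence its class is $(r,r)$ and $\Delta^+(C^{\textrm{I}}_{m,n})=\{(0,0),(1,1),\dots,(\mu,\mu)\}$, as stated. Finally, $\Delta^-(Z)=\Delta^+(Z^-)$ is computed by repeating the support-projection count inside $Z^-=\{z\in\tau(Z):\Phi(z)=-z\}$. Because $\Phi$ acts as $\pm$transposition, $Z^-$ is the alternating space when $Z$ is hermitian, the symmetric space when $Z$ is symplectic, and a second copy $\{(A,-A^\top)\}$ of the rectangular factor in the last case; as a JC*-triple each $Z^-$ is again one of our factors inside the \emph{same} ambient $\M_n$ (resp.\ $\M_{m,n}\oplus\M_{n,m}$), so $K_0(Z^-)$ and $\cL(\tau(Z^-))$ are canonically identified with those of $Z$ and the rank computation applies verbatim. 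Thus $\Phi$ interchanges the ``all ranks'' and ``even ranks'' verdicts between $\Delta^+$ and $\Delta^-$, producing the displayed quintuples. The main obstacle is exactly the single parity fact for alternating forms, together with verifying that the even ranks are realised without gaps up to the boundary value $k$; the rest is bookkeeping with the already-computed $\tau$, $\Phi$, and $\sigma_Z$.
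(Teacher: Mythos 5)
Your argument takes a genuinely different route from the paper's. The paper disposes of the $\Delta$-sets by citing \cite[Proposition 4.4]{BoWe3} together with the observation that $Z\mapsto Z^-$ interchanges the hermitian and symplectic factors and fixes the rectangular one, and it gets $\sigma_Z$ from Lemma~\ref{calc canonical involution}; you instead recompute everything from the structural data for $\tau(Z)$, $\rho_Z$ and $\Phi$. That self-contained computation is essentially sound: tripotents of a JC*-triple are exactly the partial isometries it contains, classes of projections in a sum of full matrix algebras are given by ranks, symmetric partial isometries realise every rank (e.g.\ $I_r\oplus 0$), skew-symmetric ones realise exactly the even ranks (all of them, via block sums of $\left(\begin{smallmatrix}0&1\\-1&0\end{smallmatrix}\right)$), and for the rectangular factor $\rho_Z(A)\rho_Z(A)^\ast$ has class $(r,r)$. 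You also correctly address the point the definition of $\Delta^-$ leaves implicit, namely that $K_0(Z^-)$ must be identified with $K_0(Z)$, which works because $Z^-$ generates the same ambient TRO. What your direct computation buys is independence from the external reference; what the paper's citation buys is brevity and consistency with the conventions of \cite{BoWe3}.

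The one genuine defect is your final step. Your parity dichotomy yields $\Delta^+(C^{\textrm{III}}_n)=\{0,1,\ldots,n\}$ (the hermitian factor contains the rank-one tripotent $E_{11}$) and hence $\Delta^-(C^{\textrm{III}}_n)=\Delta^+(C^{\textrm{II}}_n)=\{0,2,\ldots,k\}$, and dually for $C^{\textrm{II}}_n$; the displayed quintuples assign these two sets the other way around. So your computation does \emph{not} ``produce the displayed quintuples'' as you assert: it produces them with the $\Delta^+$ and $\Delta^-$ entries of the first two lines transposed. Since $E_{11}$ really is a symmetric tripotent whose support projection has class $1$, your verdict is the mathematically correct one, and the $\Delta^{\pm}$ entries for types II and III in the statement appear to be interchanged; but a careful proof must notice and resolve this discrepancy rather than silently claim agreement with the displayed formulas. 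As written, that unexamined assertion is the gap.
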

\begin{proof}
This follows easily from the observation that $Z^-$ is symplectic (hermitian) if $Z$ is hermitian
(symplectic), that for a rectangular factor $W$,
\begin{equation*}
    W^-=\set{(A,-A^\top)\in\M_{m,n}\oplus\M_{n,m}}{A\in W}\cong W,
\end{equation*}
and the calculation of the $\Delta$-sets carried out in \cite[Proposition 4.4]{BoWe3}. The remaining
part follows from Lemma~\ref{calc canonical involution}.
\end{proof}

%

\paragraph{Functoriality of the invariant}
One of the most important results in this section is the fact that Morita equivalence
is well behaved under TRO-morphisms.

\begin{lemma}\label{Functoriality Morita}
Whenever $\vp: T\to U$ is a TRO-morphism of separable TROs, then the diagram
\begin{equation*}
\text{\xymatrix{ K_0(\mathcal{R}(T))\ar[d]_{K_0(\mathcal{R}(\vp))}\ar[rr]^{\eta_T}
 &&K_0(T) \ar[d]^{K_0(\vp)}
\\
K_0(\mathcal{R}(U))\ar[rr]_{\eta_U}
 && K_0(U)}}
\end{equation*}
commutes.
\end{lemma}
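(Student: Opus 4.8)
The plan is to deduce the claim purely formally from two ingredients already at our disposal: the functoriality of the linking algebra $\Li$ (recalled in Section~2), and the naturality of the two corner inclusions $\iota_{\cL}$ and $\iota_{\cR}$ with respect to the induced maps $\Li(\vp)$. Write $\iota_{\cL}^T,\iota_{\cR}^T$ and $\iota_{\cL}^U,\iota_{\cR}^U$ for the canonical embeddings attached to $T$ and $U$, and recall that $K_0(\vp)=K_0(\cL(\vp))$ by the very definition of the TRO $K$-functor, so that the vertical map on the right of the diagram is $K_0(\cL(\vp))$. The first step is to observe that $\Li(\vp)=\left(\begin{smallmatrix}\cR(\vp)&\vp^*\\\vp&\cL(\vp)\end{smallmatrix}\right)$ acts entrywise, and therefore carries the lower-right corner of $\Li(T)$ into the lower-right corner of $\Li(U)$, and the upper-left corner into the upper-left corner. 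This gives, at the level of C*-morphisms, the two commuting squares
\begin{equation*}
\Li(\vp)\circ\iota_{\cL}^T=\iota_{\cL}^U\circ\cL(\vp),\qquad \Li(\vp)\circ\iota_{\cR}^T=\iota_{\cR}^U\circ\cR(\vp),
\end{equation*}
both of which are immediate from the matrix form of $\Li(\vp)$ and the description of $\iota_{\cL},\iota_{\cR}$ as the two diagonal corner embeddings.

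Next I would apply the functor $K_0$ to obtain
\begin{equation*}
K_0(\Li(\vp))\circ K_0(\iota_{\cL}^T)=K_0(\iota_{\cL}^U)\circ K_0(\cL(\vp)),\qquad K_0(\Li(\vp))\circ K_0(\iota_{\cR}^T)=K_0(\iota_{\cR}^U)\circ K_0(\cR(\vp)).
\end{equation*}
The decisive structural fact here is exactly the one recalled just before the statement: $K_0(\iota_{\cL}^T)$ and $K_0(\iota_{\cL}^U)$ are \emph{isomorphisms}. This is precisely what makes $\eta_T=K_0(\iota_{\cL}^T)^{-1}\circ K_0(\iota_{\cR}^T)$ and $\eta_U$ well defined, and it is the only point at which separability of the TROs enters, through the fullness of the corner projection in the linking algebra.

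The argument is then finished by a diagram chase. Using the first square (inverted on both sides, which is legitimate since $K_0(\iota_{\cL})$ is invertible), I rewrite
\begin{equation*}
K_0(\vp)\circ\eta_T=K_0(\cL(\vp))\circ K_0(\iota_{\cL}^T)^{-1}\circ K_0(\iota_{\cR}^T)=K_0(\iota_{\cL}^U)^{-1}\circ K_0(\Li(\vp))\circ K_0(\iota_{\cR}^T),
\end{equation*}
and the second square collapses the right-hand side to
\begin{equation*}
K_0(\iota_{\cL}^U)^{-1}\circ K_0(\iota_{\cR}^U)\circ K_0(\cR(\vp))=\eta_U\circ K_0(\cR(\vp)),
\end{equation*}
which is the asserted commutativity. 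I do not expect a genuine obstacle: the one substantive input, namely that $K_0(\iota_{\cL})$ is an isomorphism, is already available, and everything else is the formal naturality of corner embeddings under $\Li(\vp)$. The only care required is bookkeeping—keeping the $T$- and $U$-versions of the embeddings distinct and remembering the identification $K_0(\vp)=K_0(\cL(\vp))$—so that the two naturality squares are applied on the correct sides.
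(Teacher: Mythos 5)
Your proof is correct. The paper itself gives no argument for this lemma, deferring instead to \cite[Section 2]{BoWe2}, and what you have written is exactly the standard argument that reference supplies: the two naturality squares $\Li(\vp)\circ\iota_{\cL}^T=\iota_{\cL}^U\circ\cL(\vp)$ and $\Li(\vp)\circ\iota_{\cR}^T=\iota_{\cR}^U\circ\cR(\vp)$ follow from the entrywise matrix form of $\Li(\vp)$, and the diagram chase after applying $K_0$ is as you describe. You also correctly isolate the one non-formal ingredient, namely that $K_0(\iota_{\cL})$ is invertible because the diagonal corners are full in the linking algebra and the TROs are separable; since that invertibility is already presupposed by the definition of $\eta_T$, nothing further is needed.
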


Again, see \cite[Section 2]{BoWe2} for more details.

\begin{lemma}\label{real form properties}
Let $\phi:X\to Y$ be a
morphism between
JC*-triples, and denote by $\Phi_X$ and $\Phi_Y$ their canonical antiautomorphisms.
Then
\begin{equation*}
    \tau(\phi)\circ\Phi_X=\Phi_Y\circ\tau(\phi).
\end{equation*}
\end{lemma}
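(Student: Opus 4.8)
The plan is to reduce everything to a comparison on the generating set $\rho_X(X)$, exploiting the two facts that are built into the constructions: the universal property that makes $\tau(\phi)$ the lift of $\phi$ (so that $\tau(\phi)\circ\rho_X=\rho_Y\circ\phi$), and the defining property of the canonical antiautomorphisms, namely that $\Phi_X$ and $\Phi_Y$ restrict to the identity on the canonical copies $\rho_X(X)$ and $\rho_Y(Y)$. The key structural observation is that both sides of the claimed identity are continuous anti-TRO-morphisms $\tau(X)\to\tau(Y)$, and since $\rho_X(X)$ generates $\tau(X)$ as a TRO, any two such maps that agree on $\rho_X(X)$ must agree everywhere.

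First I would check the variance of the two composites. Writing the TRO product as $[a,b,c]$, the map $\Phi_X$ reverses the outer entries while $\tau(\phi)$ preserves the product, so for $\psi:=\tau(\phi)\circ\Phi_X$ one computes
\begin{equation*}
\psi([a,b,c])=\tau(\phi)\bigl([\Phi_X(c),\Phi_X(b),\Phi_X(a)]\bigr)=[\psi(c),\psi(b),\psi(a)].
\end{equation*}
The identical computation applied to $\chi:=\Phi_Y\circ\tau(\phi)$ gives $\chi([a,b,c])=[\chi(c),\chi(b),\chi(a)]$, so both composites are anti-TRO-morphisms of the same variance; both are moreover linear and continuous, being assembled from TRO-(iso)morphisms and the formal identity $\op$. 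Consequently the set on which $\psi$ and $\chi$ agree is a closed, linear, triple-product-closed subspace, hence all of $\tau(X)$ as soon as it contains $\rho_X(X)$.

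It then remains to evaluate both sides on $\rho_X(x)$, $x\in X$. On one side, since $\Phi_X$ fixes $\rho_X(X)$ and using naturality of the lift,
\begin{equation*}
\tau(\phi)\bigl(\Phi_X(\rho_X(x))\bigr)=\tau(\phi)(\rho_X(x))=\rho_Y(\phi(x)).
\end{equation*}
On the other side, applying naturality first and then that $\Phi_Y$ fixes $\rho_Y(Y)$ together with $\phi(x)\in Y$,
\begin{equation*}
\Phi_Y\bigl(\tau(\phi)(\rho_X(x))\bigr)=\Phi_Y(\rho_Y(\phi(x)))=\rho_Y(\phi(x)).
\end{equation*}
Thus both maps agree on the generating set, and by the previous paragraph the lemma follows. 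The whole argument is a formal consequence of the universal property, with no real computation involved; the only step deserving care is the verification that $\psi$ and $\chi$ are anti-morphisms of matching variance, since it is precisely this matching that licenses passing from agreement on generators to global agreement.
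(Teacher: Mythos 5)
Your proof is correct and is essentially the paper's argument in different packaging: the paper verifies the identity by explicitly computing both composites on the reversed words $x_1\prod x_{2i}^\ast x_{2i+1}$ that span $\tau(X)$, which is exactly your equalizer-of-two-antimorphisms argument unrolled on the generating set $\rho_X(X)$. Your observation that the matching variance of the two composites is the point that licenses passing from generators to all of $\tau(X)$ is the right one, and is implicit in the paper's word computation.
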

\begin{proof}
Since $X$
generates $\tau(X)$ as TRO, $\Phi_X$ (and similarly $\Phi_Y$) is uniquely determined by the relation
\begin{equation*}
    \Phi_X\left(x_1\prod_{i=1}^{n}x_{2i}^\ast x_{2i+1}\right)=x_{2n+1}\prod_{i=1}^{n}x_{2n-2i+2}^\ast x_{2n-2i+1},
    \qquad x_1,\ldots,x_{2n+1}\in X.
\end{equation*}
Thus, for all $x_1,\ldots,x_{2n+1}\in X$,
\begin{multline*}
    \tau(\phi)\left(\Phi_X\left(x_1\prod_{i=1}^{n}x_{2i}^\ast x_{2i+1}\right)\right)=\\
    =\tau(\phi)\left(x_{2n+1}\right)\prod_{i=1}^{n}\tau(\phi)\left(x_{2n-2i+2}\right)^\ast\tau(\phi)\left(x_{2n-2i+1}\right)
    =\Phi_Y\left(\tau(\phi)\left(x_1\prod_{i=1}^{n}x_{2i}^\ast x_{2i+1}\right)\right),
\end{multline*}
proving the lemma.
\end{proof}

Lemma~\ref{real form properties} implies

\begin{lemma}\label{delta properties}
Let $Z$ be a
If $\mu:Z_1\to Z_2$ is
a JC*-morphism,
then
\begin{equation*}
    \mu^-=\rest{\tau(\mu)}{Z_1^-}
\end{equation*}
is a well-defined JC*-morphism $Z_1^-\to Z_2^-$.
\end{lemma}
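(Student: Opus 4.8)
The plan is to realize $Z_1^-$ and $Z_2^-$ as closed JC*-subtriples of their universal TROs and then to show that $\tau(\mu)$ carries the first into the second, so that its restriction is automatically a triple homomorphism. The decisive preliminary observation is that each canonical antiautomorphism $\Phi$, although it reverses the order of the ternary TRO-product, is an \emph{automorphism} for the symmetrized triple product $\{x,y,z\}=\frac12\left(xy^\ast z+zy^\ast x\right)$. Indeed, as a linear TRO-antiautomorphism $\Phi$ satisfies $\Phi(xy^\ast z)=\Phi(z)\Phi(y)^\ast\Phi(x)$, and symmetrizing in the outer variables gives $\Phi(\{x,y,z\})=\{\Phi(x),\Phi(y),\Phi(z)\}$.

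First I would use this to see that $Z^-=\set{z\in\tau(Z)}{\Phi(z)=-z}$ is a JC*-subtriple. It is closed, being the $(-1)$-eigenspace of the continuous linear map $\Phi$; and for $z_1,z_2,z_3\in Z^-$ the automorphism property gives $\Phi(\{z_1,z_2,z_3\})=\{-z_1,-z_2,-z_3\}=-\{z_1,z_2,z_3\}$, using that the triple product is linear in the outer and conjugate-linear in the middle variable, so that three sign changes produce one overall sign. Hence $\{z_1,z_2,z_3\}\in Z^-$, and $Z^-$, sitting inside the C*-algebra $\Li(\tau(Z))$, is a JC*-triple.

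Next I would verify that $\mu^-$ is well defined, i.e. that $\tau(\mu)$ maps $Z_1^-$ into $Z_2^-$. This is exactly where Lemma~\ref{real form properties} enters: for $z\in Z_1^-$,
\begin{equation*}
    \Phi_Y(\tau(\mu)(z))=\tau(\mu)(\Phi_X(z))=\tau(\mu)(-z)=-\tau(\mu)(z),
\end{equation*}
so $\tau(\mu)(z)\in Z_2^-$. Finally, since $\tau(\mu)$ is a TRO-morphism it preserves the ternary product $xy^\ast z$ and hence the symmetrized product $\{\cdot,\cdot,\cdot\}$; its restriction to the subtriple $Z_1^-$ therefore preserves the triple product and, by the previous step, takes values in $Z_2^-$. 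Thus $\mu^-$ is a JC*-morphism $Z_1^-\to Z_2^-$.

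I expect no serious obstacle; the only point requiring care is the bookkeeping of the antiautomorphism property, namely confirming that reversing the ternary product is compatible with the symmetrization so that $\Phi$ is a genuine automorphism of the triple product. This is precisely what turns the intertwining relation of Lemma~\ref{real form properties} into the desired invariance of the $(-1)$-eigenspace.
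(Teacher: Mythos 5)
Your argument is correct and is exactly the route the paper intends: the paper offers no separate proof, stating only that Lemma~\ref{real form properties} implies the result, and your verification that the intertwining relation sends the $(-1)$-eigenspace of $\Phi_X$ into that of $\Phi_Y$ (together with the observation that a TRO-antiautomorphism is an automorphism of the symmetrized triple product, so $Z^-$ is a JC*-subtriple) is precisely the omitted derivation.
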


\begin{prop}\label{functoriality lemma}
The $K_0^\pm$-invariant is functorial and additive, i.e.\ for any JB*-morphism $\vp$,
$K^\pm_0(\vp)$ maps scales into scales, $\Delta^+$-- and $\Delta^-$--sets into $\Delta^+$-- and $\Delta^-$--sets,
intertwines canonical involutions, and, if $Z=Z_1\oplus\cdots\oplus Z_k$ then
\begin{equation*}
    K_0^\pm(Z)=\bigoplus_{\kappa=1}^k
    \left(K_0(Z_\kappa),\Sigma(Z_\kappa),\Delta^+(Z_\kappa),\Delta^-(Z_\kappa),\sigma_{Z_\kappa}\right).
\end{equation*}
\end{prop}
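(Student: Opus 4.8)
The plan is to establish the four structural claims in turn, throughout exploiting that, by definition, $K_0(\vp)=K_0^{\mathrm{TRO}}(\tau(\vp))=K_0(\cL(\tau(\vp)))$ is the map on $K_0$ induced by the C*-morphism $\cL(\tau(\vp)):\cL(\tau(Z))\to\cL(\tau(W))$, together with the naturality relation $\tau(\vp)\circ\rho_Z=\rho_W\circ\vp$ characterizing the lift $\tau(\vp)$. For the scale this is immediate: a C*-morphism carries projections to projections, so for any projection $p\in\cL(\tau(Z))$ the element $\cL(\tau(\vp))(p)$ is a projection in $\cL(\tau(W))$ and $K_0(\vp)[p]=[\cL(\tau(\vp))(p)]\in\Sigma(W)$. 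For $\Delta^+$, if $u\in Z$ is a tripotent then $\vp(u)$ is a tripotent in $W$, since $\vp$ preserves the triple product; using $\tau(\vp)(\rho_Z(u))=\rho_W(\vp(u))$ one computes $\cL(\tau(\vp))(\rho_Z(u)\rho_Z(u)^\ast)=\rho_W(\vp(u))\rho_W(\vp(u))^\ast$, whose class lies in $\Delta^+(W)$ (the degenerate case $\vp(u)=0$ gives the always-present class $0$). Hence $K_0(\vp)$ maps $\Delta^+(Z)$ into $\Delta^+(W)$.

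The $\Delta^-$-case I would reduce to the $\Delta^+$-case. By Lemma~\ref{delta properties} the restriction $\vp^-:=\rest{\tau(\vp)}{Z^-}$ is a JC*-morphism $Z^-\to W^-$; applying the previous paragraph to $\vp^-$ shows that $K_0(\vp^-)$ sends $\Delta^+(Z^-)=\Delta^-(Z)$ into $\Delta^+(W^-)=\Delta^-(W)$. It then remains to observe that $K_0(\vp^-)$ agrees with $K_0(\vp)$ under the canonical identifications $K_0(Z^-)=K_0(Z)$ and $K_0(W^-)=K_0(W)$: both $Z$ and $Z^-$ generate the same TRO $\tau(Z)$, so their universal TROs coincide and $\tau(\vp^-)$ is literally $\tau(\vp)$, whence the two induced $K_0$-maps are equal.

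The main obstacle is compatibility with the canonical involutions, i.e.\ that $\sigma_W\circ K_0(\vp)=K_0(\vp)\circ\sigma_Z$, where $\sigma_Z=\eta_{\tau(Z)}\,K_0(\cL(\Phi_Z))$. The plan is a diagram chase fed by the two functoriality lemmas. First, Lemma~\ref{real form properties} gives $\tau(\vp)\circ\Phi_Z=\Phi_W\circ\tau(\vp)$; reading $\Phi$ as the isomorphism onto the opposite TRO and applying the functor $\cL$ (together with the identification $\cL(X^{\op})=\cR(X)$, so that $\cR(\tau(\vp))=\cL(\tau(\vp)^{\op})$) turns this into the identity of C*-morphisms $\cL(\Phi_W)\circ\cL(\tau(\vp))=\cR(\tau(\vp))\circ\cL(\Phi_Z)$. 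Passing to $K_0$ and invoking Lemma~\ref{Functoriality Morita} (naturality of the Morita isomorphism, $\eta_{\tau(W)}\circ K_0(\cR(\tau(\vp)))=K_0(\vp)\circ\eta_{\tau(Z)}$) then lets me move $K_0(\vp)$ to the outside, yielding
\[
\sigma_W\circ K_0(\vp)=\eta_{\tau(W)}K_0(\cR(\tau(\vp)))K_0(\cL(\Phi_Z))=K_0(\vp)\,\eta_{\tau(Z)}K_0(\cL(\Phi_Z))=K_0(\vp)\circ\sigma_Z.
\]
The delicate point is the bookkeeping of the $\op$-identifications, ensuring that the C*-morphism identity really is the image under $\cL$ of the relation in Lemma~\ref{real form properties}; once the types are matched, the identity is forced.

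Finally, additivity is routine once one knows that $\tau$ carries $Z=Z_1\oplus\cdots\oplus Z_k$ to $\tau(Z_1)\oplus\cdots\oplus\tau(Z_k)$, so that $\cL$, $K_0$, $\Phi$ and $\eta$ all decompose as direct sums. Projections, tripotents, and the eigenspaces $Z^-$ in a finite direct sum split into their components, giving $\Sigma(Z)=\prod_\kappa\Sigma(Z_\kappa)$ and $\Delta^{\pm}(Z)=\prod_\kappa\Delta^{\pm}(Z_\kappa)$, while $\sigma_Z=\bigoplus_\kappa\sigma_{Z_\kappa}$; this is precisely the asserted decomposition of the quintuple.
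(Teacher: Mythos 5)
Your proposal is correct and follows essentially the same route as the paper's own (much terser) proof: functoriality of scales and $\Delta^+$ from projections/tripotents being preserved, the reduction of $\Delta^-$ to $\Delta^+$ via Lemma~\ref{delta properties}, the intertwining of involutions via Lemma~\ref{real form properties} combined with Lemma~\ref{Functoriality Morita}, and additivity from additivity of $\tau$. Your diagram chase for $\sigma_W\circ K_0(\vp)=K_0(\vp)\circ\sigma_Z$ correctly fills in the details the paper leaves implicit.
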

\begin{proof}
Functoriality of K-groups and scales is obvious. The statement for $\Delta$-sets and the involution follows
easily from Lemma~\ref{delta properties} and Lemma~\ref{real form properties}.
Additivity of the K-functor was shown in \cite[Theorem 3.2.]{BoWe1}.
$\Delta^+$ clearly behaves additively. That the canonical involution behaves additively follows from the
fact that $\tau$ is additive
and the uniqueness of the canonical antiautomorphism \cite[Theorem 3.5]{BunceFeelyTimoneyI}.
This fact established, additivity of $\Delta^-$ and $\sigma$ follow as well.
\end{proof}
\begin{defi}
We call a finite dimensional JB*-triple \emph{principal} iff it is the direct sum of non-Hilbertian factors
of type I, II, or III.
\end{defi}
This definition owes its existence to the statement of the main theorem, less so to the fact that in many of the
other cases analogous results would be impossible to prove.
\begin{thm}\label{JB*-complete invariant}
Let $Z_1$ and $Z_2$ be finite-dimensional principal $JB^\ast$-triples.
If $\gamma:K_0(Z_1)\to K_0(Z_2)$ is a morphism of groups
which
\begin{description}
  \item[(i)]
  maps the subsets in $K^\pm_0(Z_1)$ into the corresponding subsets of $K^\pm_0(Z_2)$,
  and
  \item[(ii)]
  intertwines canonical involutions,
\end{description}
then, up to unitary equivalence, there is exactly one morphism $\Phi:Z_1\to Z_2$ such that $\gamma=K_0(\Phi)$.
\end{thm}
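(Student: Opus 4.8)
The plan is to reduce everything to the factor level and then read the morphism off the block structure of $\gamma$. Write $Z_1=\bigoplus_{i=1}^{p} A_i$ and $Z_2=\bigoplus_{j=1}^{q} B_j$ as direct sums of non-Hilbertian Cartan factors of types I--III. By the additivity established in Proposition~\ref{functoriality lemma}, the whole invariant splits as a direct sum over the factors: $K_0(Z_s)=\bigoplus K_0(\cdot)$, the scale is the product of the factor scales, the $\Delta^{\pm}$-sets are products of the factor $\Delta^{\pm}$-sets, and the canonical involution is block-diagonal, acting as the identity on type II/III blocks and as the coordinate swap on type I blocks by Lemma~\ref{calc canonical involution}. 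Accordingly, $\gamma$ is encoded by its components $\gamma_{ij}\colon K_0(A_i)\to K_0(B_j)$, obtained by pre- and post-composing with the canonical inclusions and projections. Since the $A_i$ and $B_j$ are non-Hilbertian factors of types I--III, every factor morphism between them is covered by Theorem~\ref{MorphClass}, and its action on $K_0$ is tabulated in Corollary~\ref{K-morph-table}.

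For existence, the task is to show that hypotheses (i) and (ii) force each $\gamma_{ij}$ to be exactly one of the $K_0$-maps of Corollary~\ref{K-morph-table}, with multiplicities meeting the individual and collective size bounds, after which Corollary~\ref{K-morph-table} supplies the factor morphisms. Each structural condition pins down one feature. Intertwining the involutions (ii) forces the matrix shape: an integer $2\times 2$ block commuting with the two swaps must have the form $\left(\begin{smallmatrix}\alpha&\beta\\\beta&\alpha\end{smallmatrix}\right)$ in case $I\to I$, a map $\Z^2\to\Z$ compatible with the swap must be $(k_1,k_2)\mapsto\alpha(k_1+k_2)$ in case $I\to II,III$, and a map $\Z\to\Z^2$ must be $k\mapsto(\alpha k,\alpha k)$ in case $II,III\to I$. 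Mapping the scale into the scale then forces non-negative entries and, evaluated on the top element of $\Sigma(Z_1)$, yields precisely the collective inequalities $\sum_i\alpha_{ij}(\cdots)\le(\text{size of }B_j)$. Finally, mapping $\Delta^{+}$ into $\Delta^{+}$ and $\Delta^{-}$ into $\Delta^{-}$ produces the factor-of-two phenomena: because $\Delta^{+}(C^{\mathrm{III}}_N)$ and $\Delta^{-}(C^{\mathrm{II}}_N)$ contain only the even elements, the image of a generator in the cases $II\to III$ and $III\to II$ must be even, forcing multiplicity $2\alpha$, whereas in $II\to II$ and $III\to III$ no parity constraint arises; the type I cases are handled the same way using that the $\Delta^{\pm}$-sets are diagonal. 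Once each $\gamma_{ij}$ lies in the table, Corollary~\ref{K-morph-table} realizes it by a factor morphism $\Phi_{ij}\colon A_i\to B_j$; the collective bounds guarantee, via Proposition~\ref{matrixdarst von Tromorph}, that for fixed $j$ these assemble into a single morphism $Z_1\to B_j$, hence into $\Phi\colon Z_1\to Z_2$ with $K_0(\Phi)=\gamma$ by functoriality and additivity.

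For uniqueness, suppose $\Phi$ and $\Psi$ both satisfy $K_0(\Phi)=K_0(\Psi)=\gamma$. Reading off blocks gives $K_0(\Phi_{ij})=\gamma_{ij}=K_0(\Psi_{ij})$, and the bijection in Corollary~\ref{K-morph-table} shows that the $K_0$-data of a factor morphism determines its multiplicities. Hence $\tau(\Phi)$ and $\tau(\Psi)$ carry the same multiplicity matrix relative to the summands of $\tau(Z_1)$ and $\tau(Z_2)$, so by Proposition~\ref{matrixdarst von Tromorph} they are unitarily equivalent as TRO-morphisms; that is, $\Phi$ and $\Psi$ are unitarily equivalent. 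Note that (i) and (ii) play no role here---they are what makes the existence step possible.

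The main obstacle will be the bookkeeping in the existence step: checking that the three structural conditions cut out precisely the maps of Corollary~\ref{K-morph-table}, neither fewer nor more. In particular one must verify that the parity conditions coming from the $\Delta^{\pm}$-sets match exactly the even multiplicities in the $II\leftrightarrow III$ entries, and that the scale condition, evaluated on the right elements, produces exactly the collective inequalities needed to patch the factor morphisms together. The unitary extension used to patch blocks is already contained in the proof of Proposition~\ref{matrixdarst von Tromorph}, so that part is routine.
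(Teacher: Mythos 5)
Your proposal is correct and follows essentially the same route as the paper: reduce to factor components via additivity, use the involution-intertwining to pin down the shape of each block, use the $\Sigma$- and $\Delta^{\pm}$-conditions to force non-negativity, the size bounds and the parity constraints, and then invoke Corollary~\ref{K-morph-table} for existence and uniqueness of each factor morphism. If anything, you are slightly more explicit than the paper in checking that the scale condition yields the \emph{collective} inequalities needed to assemble several factor morphisms into a single target factor, a point the paper passes over when it simply declares $\Phi=(\Phi_{\mu\nu})$ to be the desired morphism.
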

\begin{proof}
Suppose first that both, $Z_1$ and $Z_2$ are principal factors. In all the cases, K$_0$-groups are either
$\Z^2$ or $\Z$, and since (ii) was supposed to hold, $\gamma$ must be represented by
\begin{equation*}
    \begin{pmatrix}
    \alpha & \beta\\
    \beta  & \alpha
    \end{pmatrix},\qquad
    \begin{pmatrix}
    \alpha\\
    \alpha
    \end{pmatrix},\qquad
    \begin{pmatrix}
    \alpha & \alpha
    \end{pmatrix},
    \qquad\alpha,\beta\in\Z,
\end{equation*}
in case $C^{\textrm{I}}\to C^{\textrm{I}}$, $C^{\textrm{I}}\to C^{\textrm{II,III}}$, and
$C^{\textrm{II,III}}\to C^{\textrm{I}}$. In all the other cases the mulitplicity matrix
is of the more simple form $(\alpha)$ where $\alpha\in\Z$.
In all cases except for a morphism $C_{m,n}^I\to C_{M,N}^I$, the condition $\gamma(\Delta^\pm(Z_1))\sub
\Delta^\pm(Z_2)$ ensues $\alpha\geq 0$ and, furthermore, forces $\alpha$ to respect the upper bounds
from Corollary~\ref{K-morph-table}.
The same condition guarantees
that the multiplicities of the morphisms between different
factors of Type II and III are even.
For a morphism $C_{m,n}^I\to C_{M,N}^I$, one uses the fact that
\begin{multline*}
\gamma(\Sigma(Z_1))=\set{(\alpha s+\beta t,\beta s+\alpha t)}{s=0,1,\ldots,m,\ t=0,1,\ldots,n}\sub\\
\sub\Sigma(Z_2)=\set{(S,T)}{S=0,1,\ldots,M,\ T=0,1,\ldots,N}
\end{multline*}
to see that $\alpha,\beta\geq 0$ with $\alpha m+\beta n\leq M$ and
$\alpha n+\beta m\leq N$.
In all cases, it then follows from Corollary~\ref{K-morph-table}
that there exists a uniquely determined morphism $\Phi:Z_1\to Z_2$ with $\gamma=K_0(\Phi)$.

For the case of arbitrary finite dimensional principal JB*-triples, let, for $i=1,2$,
$Z_i=\bigoplus_{\nu=1}^{N_i}Z_{i,\nu}$ be the decomposition
of $Z_i$ into factors. Write $\gamma=\left(\gamma_{\mu\nu}\right)$, where
$\gamma_{\mu\nu}:K_0(Z_{1,\mu})\to K_0(Z_{2,\nu})$ are morphisms of abelian groups.
Because the $K^\pm$-invariant
is additive, each of the $\gamma_{\mu\nu}$ is a $K^\pm$-morphism so that by the first part of this
proof, there are
morphisms $\Phi_{\mu\nu}:Z_{1,\mu}\to Z_{2,\nu}$ with $K_0(\Phi_{\mu\nu})=\gamma_{\mu\nu}$. Thus,
$\Phi=\left(\Phi_{\mu\nu}\right)$ is the morphism we were looking for.
\end{proof}

\paragraph{Inductive limits}
In the following, $Z$ will denote an inductive limit of JC*-triple systems $Z_n$ with morphisms $\mu_n:Z_n\to Z$, and we
think of $Z$ as $Z=\overline{\bigcup_{n\in\N}\mu_n(Z_n)}$.

We will eventually show that $K^\pm$ is continuous under the formation of inductive limits. K-theory itself
possesses this continuity property for JB*-triples, as was shown in \cite{BoWe2}, so that
\begin{equation*}
    K_0(Z)=\bigcup_{n\in\N}K_0(\mu_n)(Z_n).
\end{equation*}
That $\Sigma(\cdot)$ is continuous with respect to
inductive limits follows from combining well-known facts in the following way. Extending the connecting morphisms $\vp_n:Z_n\to Z_{n+1}$ to TRO-morphism $\tau(\vp_n):\tau(Z_n)\to\tau(Z_{n+1})$, we
can form the corresponding inductive limit in the TRO-category. As shown in \cite[Lemma~3.5]{BoWe2}, all of
the functors $\tau$, $\cL$, and $\cR$ are continuous, and so continuity of $\Sigma$ follows from \cite[IV.1.2, IV.1.3]{Davidson-Calgebrasbyexample}.

Central to the treatment of tripotents in inductive limits of JC*-triples is
\begin{lemma}\label{aprox of tripotents}
Let $Z$ be a JB*-triple system and $z\in Z$ with $\|z\|<1$. If $\|\{z,z,z\}-z\|<\varepsilon\leq\frac{1}{4}$, then there exists a tripotent $e\in Z$ with $\|e-z\|<\varepsilon^{1/2}$.
\begin{proof}
It is well known \cite[Lemma 1.14]{Kaup-ARiemannMappingTheoremForBoundedSymmetricDomainsInComplexBanachSpaces}
that $\fT$, the JB*-triple generated by $z$, is, through a Gelfand transform $x\mapsto\hat{x}$,
triple (and thus isometrically) isomorphic to
a commutative $C^\ast $-algebra $\fC_0(S,\C)$, equipped with its canonical triple structure structure.
The assumption yields, for all $s\in S$,
\begin{equation*}
   \left| 1-|\hat{z}(s)|\right\|\hat{z}(s)|<\frac{\eps}{1+|\hat{z}(s)|}\leq\eps
\end{equation*}
and so for the spectrum of $z$ within $\fT$ we find $\sigma_\fT(z)=\hat{z}(S)\sub U_0\cup U_1$, where
\begin{equation*}
    U_0=\set{c\in\C}{|c|<\eps^{1/2}\leq\frac{1}{2}}\quad\text{and}\quad
    U_1=\set{c\in\C}{|1-|c\|<\eps^{1/2}\leq\frac{1}{2}}.
\end{equation*}
Note that the sets $\hat{z}^{-1}(U_{0,1})$ are open and closed, and $\hat{z}^{-1}(U_1)$ is compact. Let
\begin{equation*}
\hat{e}(s)=
    \begin{cases}
    0                               &\text{if $\hat{z}(s)\in U_0$}\\
    \hat{z}(s)|\hat{z}(s)|^{-1}     &\text{if $\hat{z}(s)\in U_1$}.
    \end{cases}
\end{equation*}
Then $\hat{e}\in \fC_0(S,\C)$ and
$
\|\hat{z}-\hat{e}\|<
\eps^{1/2}
$,
whence $e\in\fT$ is the tripotent we were looking for.
\end{proof}
\end{lemma}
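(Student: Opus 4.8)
The plan is to reduce everything to a statement about a single function in a commutative C*-algebra, using the spectral theory of a singly generated JB*-triple. First I would invoke the Gelfand-type transform of \cite{Kaup-ARiemannMappingTheoremForBoundedSymmetricDomainsInComplexBanachSpaces}: the closed JB*-subtriple $\fT$ generated by $z$ is, through an isometric triple isomorphism $x\mapsto\hat{x}$, identified with a commutative C*-algebra $C_0(S,\C)$ carrying its canonical triple product $\{f,g,h\}=f\bar{g}h$. Because this isomorphism is isometric and respects the triple product, it suffices to build the desired tripotent $\hat{e}$ on the function side; its preimage $e\in\fT\subseteq Z$ will then be a tripotent of $Z$ with the same distance estimate.

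On the $C_0(S,\C)$ side the hypothesis becomes a pointwise condition on the modulus. Writing $t(s)=|\hat{z}(s)|$ and using $\{z,z,z\}=|\hat{z}|^2\hat{z}$, the assumption $\|\{z,z,z\}-z\|<\varepsilon$ reads $t(s)\,|1-t(s)^2|<\varepsilon$ for all $s\in S$; since $\|z\|<1$ gives $t(s)<1$, this in particular forces $t(s)(1-t(s))<\varepsilon$. The decisive step, and the one place where the bound $\varepsilon\leq\tfrac14$ is used, is the elementary observation that the concave function $t\mapsto t(1-t)$ attains its minimum on the interval $[\varepsilon^{1/2},1-\varepsilon^{1/2}]$ at the endpoints, with value $\varepsilon^{1/2}-\varepsilon\geq\varepsilon$. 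Hence no $t(s)$ can lie in that interval, and the range of $\hat{z}$ is confined to the two disjoint regions $U_0=\{c\in\C:|c|<\varepsilon^{1/2}\}$ and $U_1=\{c\in\C:|1-|c||<\varepsilon^{1/2}\}$.

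Since $U_0$ and $U_1$ are separated, their preimages are disjoint open-and-closed subsets of $S$ whose union is $S$, and $\hat{z}^{-1}(U_1)$ is compact because $\hat{z}$ vanishes at infinity while $U_1$ is bounded away from $0$. I would then set $\hat{e}$ equal to $0$ on $\hat{z}^{-1}(U_0)$ and equal to $\hat{z}/|\hat{z}|$ on $\hat{z}^{-1}(U_1)$; the clopen splitting guarantees that $\hat{e}$ is a genuine element of $C_0(S,\C)$. It is a tripotent because $|\hat{e}|$ takes only the values $0$ and $1$, so that $|\hat{e}|^2\hat{e}=\hat{e}$, and the estimate is verified region by region: on $\hat{z}^{-1}(U_0)$ one has $|\hat{z}-\hat{e}|=|\hat{z}|<\varepsilon^{1/2}$, whereas on $\hat{z}^{-1}(U_1)$ one has $|\hat{z}-\hat{e}|=|\,|\hat{z}|-1|<\varepsilon^{1/2}$. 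Thus $\|\hat{z}-\hat{e}\|<\varepsilon^{1/2}$, and transporting $\hat{e}$ back through the isomorphism yields the required tripotent $e\in Z$.

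I expect the only genuine difficulty to be the input spectral theorem; granting it, the construction is a routine piece of continuous functional calculus. The subtle calculational point worth isolating is the separation of the spectrum into $U_0$ and $U_1$: this rests entirely on the hypothesis $\varepsilon\leq\tfrac14$ through the concavity estimate above, for if $\varepsilon$ were larger the modulus of $\hat{z}$ could penetrate the annulus $\{\varepsilon^{1/2}\leq|c|\leq1-\varepsilon^{1/2}\}$, the preimages would cease to be clopen, and the candidate $\hat{e}$ would no longer be continuous.
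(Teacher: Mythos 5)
Your proof is correct and follows essentially the same route as the paper: the Gelfand transform onto $\fC_0(S,\C)$, the separation of the spectrum into the two regions $U_0$ and $U_1$ via the hypothesis $\varepsilon\leq\frac{1}{4}$, and the definition of $\hat{e}$ as $0$ on $\hat{z}^{-1}(U_0)$ and $\hat{z}/|\hat{z}|$ on $\hat{z}^{-1}(U_1)$. Your explicit concavity argument for why $|\hat{z}(s)|$ cannot enter the interval $[\varepsilon^{1/2},1-\varepsilon^{1/2}]$ is exactly the (unstated) calculation behind the paper's spectral inclusion.
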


\begin{cor}
Let $Z$ be the inductive limit of a directed system of $JC^\ast $-triple systems $(Z_n)$, with canonical morphisms
$\mu_{n}:Z_n\to Z$.
If $e\in Z$ is tripotent and $1\geq\varepsilon>0$, then there exist $n\in\N$ and a tripotent $f\in Z_n$
such that
\[
\|e-\mu_n(f)\|<\varepsilon.
\]
\begin{proof}
Since $\mu_n$ and $\{\cdot,\cdot,\cdot\}$ are continuous, we may pick $n\in\N$ as well as $e_n\in Z_n$ with $\|e-\mu_n(e_n)\|<\eps/2$ as well as $\|\{e_n,e_n,e_n\}-e_n\|<\eps^2/4$. The previous Lemma
produces a tripotent $f\in Z_n$ with $\|e_n-f\|<\eps/2$, and we are done.
\end{proof}
\end{cor}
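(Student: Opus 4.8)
The plan is to pull the tripotent $e$ back into a single $Z_n$, where it becomes an \emph{approximate} tripotent, and then to invoke Lemma~\ref{aprox of tripotents} to straighten it out into an honest tripotent. First I would dispose of the trivial case $e=0$ (take $f=0$), and otherwise record that a non-zero tripotent has norm one: from axiom (a) together with $\{e,e,e\}=e$ one gets $\|e\|=\|e\|^3$, hence $\|e\|=1$. Using $Z=\overline{\bigcup_n\mu_n(Z_n)}$, I would then choose $n\in\N$ and $e_n\in Z_n$ with $\mu_n(e_n)$ as close to $e$ as desired, say $\|e-\mu_n(e_n)\|<\delta$; after a harmless rescaling of $e_n$ by a factor just below $1$ I may also assume $\|e_n\|<1$, which is the hypothesis needed for the approximation lemma, at the cost of enlarging $\delta$ negligibly.

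The crucial step is to convert this into the statement that $e_n$ \emph{almost} satisfies the tripotent equation \emph{inside} $Z_n$. Here I would use that the canonical maps $\mu_n$ are isometric (injective \jb\ morphisms being automatically isometric), so that
\[
\|\{e_n,e_n,e_n\}-e_n\|=\|\mu_n(\{e_n,e_n,e_n\}-e_n)\|=\|\{\mu_n e_n,\mu_n e_n,\mu_n e_n\}-\mu_n e_n\|.
\]
The right-hand side is small because $\{\cdot,\cdot,\cdot\}$ is trilinear and contractive: writing $a=\mu_n(e_n)$ and expanding $\{a,a,a\}-\{e,e,e\}$ as a telescoping sum of three triple products, each carrying one factor $a-e$, one obtains $\|\{a,a,a\}-a\|\le C\,\|a-e\|$ with $C$ a constant depending only on $\|a\|,\|e\|\le 1$ (one may take $C=4$). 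Thus choosing $\delta$ small enough forces $\|\{e_n,e_n,e_n\}-e_n\|<\eps^2/4\le\frac14$.

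Having produced an approximate tripotent $e_n\in Z_n$ with $\|e_n\|<1$ and $\|\{e_n,e_n,e_n\}-e_n\|<\eps^2/4$, I would apply Lemma~\ref{aprox of tripotents} (with error $\eps^2/4$) to obtain a genuine tripotent $f\in Z_n$ with $\|e_n-f\|<(\eps^2/4)^{1/2}=\eps/2$. The triangle inequality together with isometry of $\mu_n$ then closes the argument:
\[
\|e-\mu_n(f)\|\le\|e-\mu_n(e_n)\|+\|\mu_n(e_n)-\mu_n(f)\|=\|e-\mu_n(e_n)\|+\|e_n-f\|<\eps,
\]
provided $\delta$ was taken below $\eps/2$. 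The one genuine subtlety---the step I expect to be the main obstacle---is precisely this transfer of the almost-tripotent estimate from the limit $Z$ back into $Z_n$: the $Z_n$-norm is controlled by the $Z$-norm of the image only because the connecting, and hence canonical, morphisms are isometric; without that, a small $\mu_n(\{e_n,e_n,e_n\}-e_n)$ would not force $\{e_n,e_n,e_n\}-e_n$ itself to be small. The remaining bookkeeping (arranging $\|e_n\|<1$ and bounding the trilinear remainder) is routine.
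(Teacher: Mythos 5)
You take exactly the route the paper takes: pull $e$ back to an element $e_n$ of some $Z_n$ which is an approximate tripotent, apply Lemma~\ref{aprox of tripotents} inside $Z_n$, and push the resulting tripotent forward with the triangle inequality. The quantitative bookkeeping you supply (disposing of $e=0$, rescaling to get $\|e_n\|<1$, the telescoping estimate $\|\{a,a,a\}-a\|\le 4\|a-e\|$ for $\|a\|,\|e\|\le 1$ using $\{e,e,e\}=e$) is correct and is in fact more than the paper bothers to write down.

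The one step that does not survive scrutiny as stated is your transfer of the almost-tripotent estimate into $Z_n$: you invoke isometry of $\mu_n$ on the grounds that injective \jb{} morphisms are isometric, but for a general directed system the canonical morphisms $\mu_n$ need not be injective (connecting maps may, for instance, annihilate direct summands), and then $\|\{e_n,e_n,e_n\}-e_n\|$ computed in $Z_n$ can strictly exceed the norm of its image in $Z$. The gap is easily closed by the standard device: since $\|\mu_n(w)\|=\lim_m\|\vp_{m,n}(w)\|$ is a decreasing limit for each fixed $w\in Z_n$, apply this to the single element $w=\{e_n,e_n,e_n\}-e_n$ and replace $e_n$ by $e_m=\vp_{m,n}(e_n)$ for $m$ large; this leaves $\mu_m(e_m)=\mu_n(e_n)$ unchanged, preserves $\|e_m\|<1$, and makes $\|\{e_m,e_m,e_m\}-e_m\|=\|\vp_{m,n}(w)\|$ as close to $\|\mu_n(w)\|$ as desired. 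With that amendment your argument is complete; note that the paper's own proof (``since $\mu_n$ and $\{\cdot,\cdot,\cdot\}$ are continuous, we may pick\ldots'') elides exactly this point.
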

This last lemma shows in light of \cite[IV.1.2]{Davidson-Calgebrasbyexample} that
$\Delta^+(Z)=\bigcup_{n\in\N}K_0(\mu_{n})\left(\Delta^+(Z_n)\right)$.
In order to treat $\Delta^-(Z)$ and the canonical automorphism of $Z$ in a similar way,
we prove the following

\begin{lemma}\label{Limits of automorphisms}
Let $Z$ be the inductive limit of finite dimensional universally reversible JC*-triple systems.
Then $Z$ is universally reversible. Furthermore,
\begin{description}
  \item[(a)]
  the involutive antiautomorphisms $\Phi$ and $\Phi_n$ of $Z$ and $Z_n$ satisfy
    \begin{equation*}
    \rest{\Phi}{\tau\left(\mu_n\right)\left(\tau\left(Z_n\right)\right)}=\tau\left(\mu_n\right)\Phi_n,
    \end{equation*}
  \item[(b)]
   the canonical involutions $\sigma$ and $\sigma_n$ of $K_0(Z)$ and $K_0(Z_n)$, respectively, satisfy
    \begin{equation*}
    \rest{\sigma}{K_0(\mu_n)(K_0(Z_n))}=K_0(\mu_n)\sigma_n.
    \end{equation*}
\end{description}
\begin{proof}
Part (a)is a consequence of Lemma~\ref{real form properties},
whereas (b) follows from Lemma~\ref{Functoriality Morita}. In order to see that $Z$ is universally reversible,
we use the fact that, with the canonical antiautomorphism $\Phi$ of $Z$ given in (a), $z\in\tau(Z)$ is
in $Z$ as soon as $\Phi(z)=z$ as well as \cite[Lemma 4.2]{BunceFeelyTimoneyI}.
\end{proof}
\end{lemma}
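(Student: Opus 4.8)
The plan is to dispose of the two displayed identities first, since they are formal consequences of the functorial apparatus already assembled, and to extract universal reversibility of $Z$ afterwards from identity (a). Before anything else I would record that $Z$ really is a JC*-triple carrying a canonical antiautomorphism: since $\tau$ is continuous under inductive limits (\cite[Lemma~3.5]{BoWe2}), the universal TRO of $Z$ is the inductive limit of the $\tau(Z_n)$, so $\tau(Z)=\overline{\bigcup_{n}\tau(\mu_n)(\tau(Z_n))}$ and $Z$ embeds isometrically into this TRO; in particular $Z$ is a separable JC*-triple and, by \cite[Theorem 3.5]{BunceFeelyTimoneyI}, possesses a canonical involutive antiautomorphism $\Phi$. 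With $\Phi$ available, (a) is nothing but Lemma~\ref{real form properties} applied to the morphism $\mu_n:Z_n\to Z$, which gives $\Phi\circ\tau(\mu_n)=\tau(\mu_n)\circ\Phi_n$, i.e. precisely the asserted restriction.

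For (b) I would run a diagram chase. Reading (a) as an identity of TRO-isomorphisms into the opposite TRO and applying the functor $\mathcal{L}$ — using the identification $\mathcal{L}((\cdot)^{\op})=\mathcal{R}(\cdot)$ from the beginning of this section — yields $\mathcal{L}(\Phi)\circ\mathcal{L}(\tau(\mu_n))=\mathcal{R}(\tau(\mu_n))\circ\mathcal{L}(\Phi_n)$, hence after applying $K_0$,
\[
K_0(\mathcal{L}(\Phi))\circ K_0(\mu_n)=K_0(\mathcal{R}(\tau(\mu_n)))\circ K_0(\mathcal{L}(\Phi_n)).
\]
Composing on the left with $\eta_{\tau(Z)}$, the definition $\sigma=\eta_{\tau(Z)}K_0(\mathcal{L}(\Phi))$ turns the left-hand side into $\sigma\circ K_0(\mu_n)$, while Lemma~\ref{Functoriality Morita}, applied to $\tau(\mu_n)$, rewrites $\eta_{\tau(Z)}\circ K_0(\mathcal{R}(\tau(\mu_n)))$ as $K_0(\mu_n)\circ\eta_{\tau(Z_n)}$; the right-hand side then collapses to $K_0(\mu_n)\circ\eta_{\tau(Z_n)}K_0(\mathcal{L}(\Phi_n))=K_0(\mu_n)\circ\sigma_n$, which is (b).

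The step I expect to be the actual work is universal reversibility, namely the inclusion $\set{z\in\tau(Z)}{\Phi(z)=z}\subseteq\rho_Z(Z)$ (the reverse being automatic, since $\Phi$ fixes $Z$ pointwise). Given a $\Phi$-fixed $z$ and $\eps>0$, density lets me pick $w\in\tau(Z_n)$ with $\|\tau(\mu_n)(w)-z\|<\eps$. I would then symmetrize: $w':=\tfrac12(w+\Phi_n(w))$ is $\Phi_n$-fixed, so universal reversibility of the finite-dimensional $Z_n$ puts $w'\in\rho_{Z_n}(Z_n)$, whence $\tau(\mu_n)(w')=\rho_Z(\mu_n(\,\cdot\,))\in\rho_Z(Z)$. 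Because $\Phi$ is an isometry of $\tau(Z)$ and $z=\tfrac12(z+\Phi(z))$, identity (a) gives
\[
\|\tau(\mu_n)(w')-z\|=\tfrac12\|(\tau(\mu_n)(w)-z)+\Phi(\tau(\mu_n)(w)-z)\|<\eps,
\]
so $z$ lies in the closure of $\rho_Z(Z)$; as $Z$ is complete, $\rho_Z(Z)$ is closed and $z\in\rho_Z(Z)$. The subtlety here — and the reason one cannot merely quote the finite-dimensional fact — is that the limit involves a completion, so the fixed-point characterization must be shown to survive the approximation; the symmetrization together with the isometry of $\Phi$ is exactly what keeps the error estimate under control. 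Invoking \cite[Lemma 4.2]{BunceFeelyTimoneyI} then packages this into the statement that $Z$ is universally reversible.
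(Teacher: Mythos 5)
Your proposal is correct and follows essentially the same route as the paper: part (a) via Lemma~\ref{real form properties}, part (b) via Lemma~\ref{Functoriality Morita} together with the definition of $\sigma$, and universal reversibility via the fixed-point characterization of $\rho_Z(Z)$ under $\Phi$ combined with \cite[Lemma 4.2]{BunceFeelyTimoneyI}. The paper leaves the density/symmetrization argument for $\set{z\in\tau(Z)}{\Phi(z)=z}\sub\rho_Z(Z)$ implicit, whereas you supply it explicitly; that filled-in step is exactly what is needed and is carried out correctly.
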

From Lemma~\ref{Limits of automorphisms}(a) we may now conclude that $Z^-$ is the inductive limit of the system
$(Z_n^-,\vp_n^-)$ with canonical mappings $\mu_n^-:Z_n\to Z$. As a consequence, the same argument showing that
$\Delta^+(Z)$ is the union of sets $\mu_n(\Delta^+(Z_n))$ now shows that the analogous statement holds for $\Delta^-(Z)$.
We summarize this discussion in

\begin{prop}\label{Kpm and limit}
Let $\left(Z,(\mu_n)\right)$ be the inductive limit of finite dimensional universally reversible JC*-triples systems $(Z_n)$.
Then
\begin{align*}
K_0(Z)&=\bigcup_{n\in\N}K_0(Z_n)\\
\Sigma(Z)&=\bigcup_{n\in\N}K_0(\mu_{n})\left(\Sigma(Z_n)\right)\\
\Delta^+(Z)&=\bigcup_{n\in\N}K_0(\mu_{n})\left(\Delta^+(Z_n)\right)\\
\Delta^-(Z)&=\bigcup_{n\in\N}K_0(\mu_{n})\left(\Delta^-(Z_n)\right)\quad\text{and}\\
\sigma_Z &=\lim_{\longrightarrow} \sigma_{Z_n}.
\end{align*}
\end{prop}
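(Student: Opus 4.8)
The plan is to assemble the five identities from the results accumulated in the preceding discussion; the proposition is essentially a bookkeeping statement, so the strategy is to match each line to the fact that establishes it, in the order $K_0$, $\Sigma$, $\Delta^+$, $\Delta^-$, $\sigma$.

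First I would record that the identity for $K_0$ is exactly the continuity of the $K_0$-functor for JB*-triples proved in \cite{BoWe2}: since the connecting morphisms $\vp_n$ lift to TRO-morphisms $\tau(\vp_n)$ and the functors $\tau$, $\cL$, $\cR$ are all continuous (\cite[Lemma~3.5]{BoWe2}), $K_0(Z)$ is the union of the images $K_0(\mu_n)(K_0(Z_n))$. The identity for the scale $\Sigma$ follows from the same continuity of the functors, combined with the corresponding statement for C*-algebras, namely that the scale of an inductive limit of C*-algebras is the union of the images of the scales (\cite[IV.1.2,~IV.1.3]{Davidson-Calgebrasbyexample}); this was already carried out in the paragraph on inductive limits.

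For $\Delta^+(Z)$ the new ingredient is the tripotent approximation provided by the Corollary to Lemma~\ref{aprox of tripotents}: every tripotent of $Z$ is approximated in norm by images $\mu_n(f)$ of tripotents $f\in Z_n$. Translating each $\rho_Z(\mu_n(f))\rho_Z(\mu_n(f))^\ast$ into a projection in $\cL(\tau(Z))$ and invoking \cite[IV.1.2]{Davidson-Calgebrasbyexample}, I would conclude that $\Delta^+(Z)$ is the union of the $K_0(\mu_n)(\Delta^+(Z_n))$. The identity for $\Delta^-$ then follows from the same argument applied to the inductive system $(Z_n^-,\vp_n^-)$: by Lemma~\ref{Limits of automorphisms}(a) the antiautomorphisms are compatible with the connecting maps, so $Z^-=\lim_{\longrightarrow}Z_n^-$, and since $\Delta^-(Z)=\Delta^+(Z^-)$ and $\Delta^-(Z_n)=\Delta^+(Z_n^-)$ the $\Delta^+$-statement for the system $(Z_n^-)$ yields exactly the desired identity.

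Finally, the equality $\sigma_Z=\lim_{\longrightarrow}\sigma_{Z_n}$ is an interpretation of Lemma~\ref{Limits of automorphisms}(b). Indeed, to say that $\sigma_Z$ is the inductive limit of the $\sigma_{Z_n}$ means precisely that $\rest{\sigma_Z}{K_0(\mu_n)(K_0(Z_n))}=K_0(\mu_n)\sigma_{Z_n}$ holds for every $n$, which is the content of that lemma; since $K_0(Z)$ is the union of the $K_0(\mu_n)(K_0(Z_n))$ by the first identity, these compatibilities determine $\sigma_Z$ uniquely as the limiting conjugate-linear, self-adjoint isomorphism. I do not expect a genuine obstacle here, as the essential analytic content was already absorbed into Lemma~\ref{aprox of tripotents} and the functoriality statements (Lemmas~\ref{real form properties} and~\ref{Limits of automorphisms}); the only point requiring care is to read the fifth line correctly as the cocompatibility condition that characterizes the inductive limit of the involutions, rather than as a pointwise limit of maps.
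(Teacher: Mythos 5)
Your proposal is correct and follows essentially the same route as the paper: continuity of $K_0$ from \cite{BoWe2}, continuity of $\Sigma$ via the continuity of $\tau$, $\cL$, $\cR$ together with \cite[IV.1.2, IV.1.3]{Davidson-Calgebrasbyexample}, the tripotent approximation corollary for $\Delta^+$, the passage to the system $(Z_n^-,\vp_n^-)$ via Lemma~\ref{Limits of automorphisms}(a) for $\Delta^-$, and Lemma~\ref{Limits of automorphisms}(b) for $\sigma_Z$. The paper presents exactly this assembly as the discussion preceding the proposition, so there is nothing to add.
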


\section{Main Results}
This section is devoted to the proof of
\begin{thm}\label{main}
Each inductive limit of finite dimensional JB*-triples $Z$ admits a decomposition
\begin{equation*}
    Z=P\oplus E\oplus S\oplus H
\end{equation*}
where $H$ is a direct sum of Hilbert spaces, $S$ a direct sum of spin factors and $E$
a direct sum of exceptional factors.

The principal part $P$ of $Z$ is an inductive limit
of hermitian, symplectic and rectangular factors and can be classified by its K$^\pm$-invariant
which is, in a suitable sense, the inductive limit of the involved finite dimensional JB*-triples.
\end{thm}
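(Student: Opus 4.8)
The plan is to establish the two assertions in turn: first the existence of the four-fold direct sum decomposition, and then the classification of the principal summand by an Elliott-type approximate intertwining.

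For the decomposition I would write $Z=\overline{\bigcup_n\mu_n(Z_n)}$ and decompose each finite-dimensional stage into its Cartan factors, $Z_n=\bigoplus_i F_i^{(n)}$, sorting the factors into the four species: principal (types I--III, Hilbert spaces excluded), exceptional (types V, VI), spin (type IV), and Hilbert. After passing to a cofinal subsystem and putting every connecting map into the standard form of Theorem~\ref{MorphClass}, that theorem severely restricts the possible maps between factors: morphisms of Hilbert spaces are isometric embeddings into Hilbert spaces, morphisms of genuine spin factors have the rigid form $x\mapsto\mu U(x)$ into spin factors, and the factors of types I--III map among themselves. Since a JB*-morphism preserves orthogonality, the four species give rise to mutually orthogonal sub-systems, and taking closed spans of their images should yield $Z=P\oplus E\oplus S\oplus H$. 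The exceptional summand is governed by the fact that its universal TRO is the null space: a non-zero morphism out of an exceptional factor would produce an embeddable quotient of a simple non-embeddable triple and hence must vanish, so these factors assemble into an independent summand $E$ on which the entire K-theory is trivial.

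For the classification of $P$, suppose $P=\varinjlim(P_n,\vp_n)$ and $Q=\varinjlim(Q_m,\psi_m)$ are inductive limits of principal factors and that $\gamma\colon K^\pm_0(P)\to K^\pm_0(Q)$ is an isomorphism of the invariant. By Proposition~\ref{Kpm and limit} both sides are the inductive limits of the finite-dimensional invariants $K^\pm_0(P_n)$ and $K^\pm_0(Q_m)$; since $K_0(P_n)$ is finitely generated and the scale and $\Delta$-sets are finite, the composite $K^\pm_0(P_n)\to K^\pm_0(P)\xrightarrow{\gamma}K^\pm_0(Q)=\varinjlim K^\pm_0(Q_m)$ factors through some stage $K^\pm_0(Q_m)$ as a morphism of invariants. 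Theorem~\ref{JB*-complete invariant} then lifts this, uniquely up to unitary equivalence, to a JB*-morphism $P_n\to Q_m$. Performing the symmetric construction for $\gamma^{-1}$ and alternating, I would build a back-and-forth ladder of JB*-morphisms whose triangles commute exactly on $K_0$; by the uniqueness clause of Theorem~\ref{JB*-complete invariant} each triangle then commutes up to unitary equivalence. A standard Elliott intertwining argument---absorbing the intertwining unitaries (which live in the left C*-algebras $\mathcal{L}(\tau(\cdot))$) into the connecting maps, an operation that does not change the inductive limit---produces a JB*-triple isomorphism $P\to Q$ inducing $\gamma$ on $K_0$. This establishes that the $K^\pm$-invariant is a complete isomorphism invariant for limits of principal factors, while Proposition~\ref{Kpm and limit} identifies it as the inductive limit of the finite-dimensional invariants, as claimed.

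The main obstacle is twofold. In the decomposition, Theorem~\ref{MorphClass} deliberately omits the type-crossing morphisms that send a low-rank factor (Hilbert space or spin factor) into a higher-rank factor of different type; justifying that these can be ignored, despite the genuine CAR-type embeddings of spin factors into matrix algebras, requires the passage to a species-preserving cofinal subsystem together with the handling of the low-dimensional coincidences whereby $\M_{2,2}$, $C^{\textrm{II}}_4$ and $C^{\textrm{III}}_2$ are themselves spin factors. In the classification, the technical heart is the unitary bookkeeping: unlike the C*-case, unitary equivalence of TRO-morphisms is governed by a pair of unitaries in $\mathcal{L}(\tau(\cdot))$ and $\mathcal{R}(\tau(\cdot))$, and one must verify that these can be chosen compatibly along the ladder so that the telescoped intertwining converges to an honest triple isomorphism rather than to a merely approximate one.
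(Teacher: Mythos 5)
Your treatment of the principal summand is essentially the paper's own argument: it is precisely Theorem~\ref{AF-Triple klassifizierung}, proved by the same Elliott-type back-and-forth using Proposition~\ref{Kpm and limit} and Theorem~\ref{JB*-complete invariant}, with the unitary bookkeeping you worry about handled by recursively absorbing the pairs of unitaries from $\cL(\tau(\cdot))$ and $\cR(\tau(\cdot))$ into the connecting maps (Lemma~\ref{lemma below}), so that half of the proposal is sound.

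The gap is in the decomposition, and it is the one you flag yourself but do not close. Theorem~\ref{MorphClass} does \emph{not} say that Hilbert spaces map only to Hilbert spaces and spin factors only to spin factors; it explicitly excludes from its scope the morphisms from low-rank factors into high-rank factors of a different type, and such morphisms genuinely occur (the CAR-type embeddings of spin factors into matrix factors, isometric embeddings of Hilbert spaces into rectangular or spin factors). Consequently a ``species-preserving cofinal subsystem'' need not exist: fresh Hilbert-space or spin summands can appear at every stage and feed into principal factors one stage later, so no passage to a subsequence removes the cross-species edges, and ``preservation of orthogonality'' does not separate the species either. The missing idea is the rank monotonicity $\rank\vp(Z)\leq\rank W$ from Section 2, which the paper formalizes as the ``cul-de-sac'' property: it orders the classes as $H$ (rank $1$), then $S$ (rank $2$), then $P$, with $E$ isolated because an exceptional factor admits no non-zero morphism into a JC*-triple (your argument for that point is correct). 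Hence along any path of the Bratteli diagram the class can only move forward and must eventually stabilize, and the limit splits over the stabilized paths. At that point you still need Proposition~\ref{SH-case}: identifying the $S$- and $H$-summands as direct sums of spin factors and Hilbert spaces is not automatic from ``taking closed spans of the images'' --- for spin factors one must first strip the unimodular phases $\mu_n$ out of the connecting maps and then run the path decomposition. The low-dimensional coincidences ($\M_{2,2}$, $C^{\textrm{II}}_4$, $C^{\textrm{III}}_2$ being spin factors) are absorbed into the rank argument rather than needing separate treatment.
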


\begin{defi}
Let $Z$ be a JB*-triple.
\begin{description}
\item[(a)]
We call $Z$ an \emph{AF-triple} if it is the inductive limit of finite dimensional JB*-triple systems.
\item[(b)]
$Z$ is called a \emph{principal AF-triple} if it is an inductive limit
of (sums of) hermitian, symplectic and rectangular factors.
\end{description}
\end{defi}

\begin{prop}
If $\fA$ is a $C^\ast $-algebra which is the inductive limit of finite dimensional JB*-triple systems, then it is already an AF-algebra.
\end{prop}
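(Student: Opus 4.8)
The plan is to exhibit $\fA$ directly as the closure of an increasing union of \emph{finite dimensional} $C^*$-subalgebras, which is precisely the defining property of an AF-algebra. Write $\fA=\overline{\bigcup_{n\in\N}\mu_n(Z_n)}$ as in the preceding paragraphs, with the $Z_n$ finite dimensional and $\mu_n:Z_n\to\fA$ the canonical JB*-morphisms. The obvious candidate, the $C^*$-subalgebra generated by $\mu_n(Z_n)$, is of no use: a single tripotent whose source and range projections enclose a nontrivial angle already generates an infinite dimensional $C^*$-algebra (the universal $C^*$-algebra of a partial isometry), so this naive span need not be finite dimensional. The idea of the proof is that one should instead pass to the \emph{left $C^*$-algebras of the sub-TROs generated by the $\mu_n(Z_n)$}, and that the genuinely multiplicative structure of $\fA$ forces their union to be dense.

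First I would fix, for each $n$, the sub-TRO $T_n\subseteq\fA$ generated by $\mu_n(Z_n)$, where $\fA$ is regarded as a TRO under $[x,y,z]=xy^\ast z$. Since $\mu_n$ is a JB*-morphism from $Z_n$ into the TRO $\fA$, the universal (enveloping) property of $\tau(Z_n)$ produces a TRO-morphism $\tau(Z_n)\to\fA$ whose image is exactly the sub-TRO generated by $\mu_n(Z_n)$, namely $T_n$; as $\tau(Z_n)$ is finite dimensional for finite dimensional $Z_n$ (by the explicit computation of the universal TROs of the Cartan factors recalled in Section~2), $T_n$ is finite dimensional. Now set $P_n:=\overline{\lin}\,(T_nT_n^\ast)\subseteq\fA$. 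This is $\ast$-closed, and it is closed under multiplication: for $x,y,u,v\in T_n$ one has $(xy^\ast)(uv^\ast)=(xy^\ast u)v^\ast$, and $xy^\ast u=[x,y,u]\in T_n$ because $T_n$ is a TRO, so $(xy^\ast)(uv^\ast)\in T_nT_n^\ast$. Hence each $P_n$ is a finite dimensional $C^*$-subalgebra of $\fA$ (a finite dimensional $\ast$-subalgebra is automatically norm-closed). Since $\mu_n=\mu_{n+1}\circ\varphi_n$ gives $\mu_n(Z_n)\subseteq\mu_{n+1}(Z_{n+1})$ and hence $T_n\subseteq T_{n+1}$, the sequence $P_1\subseteq P_2\subseteq\cdots$ is increasing.

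It remains to show $\overline{\bigcup_n P_n}=\fA$, and this is the step where the full $C^*$-algebra hypothesis, and not merely embeddability of $\fA$ into a $C^*$-algebra, is essential; I expect it to be the crux of the argument. Here I would use an approximate identity $(e_\lambda)$ of $\fA$: given $a\in\fA$ and $\varepsilon>0$ choose $e_\lambda=e_\lambda^\ast$ with $\|a-ae_\lambda\|<\varepsilon/2$, and then, using density of $\bigcup_n\mu_n(Z_n)\subseteq\bigcup_n T_n$, approximate both $a$ and $e_\lambda$ to within a small $\delta$ by elements $x,y\in T_m$ for a common large $m$. Then $xy^\ast\in T_mT_m^\ast\subseteq P_m$ and $\|a-xy^\ast\|\le\|a-ae_\lambda\|+\|ae_\lambda-xy^\ast\|<\varepsilon$ once $\delta$ is small enough, so $a\in\overline{\bigcup_n P_n}$. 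Thus $\fA=\overline{\bigcup_n P_n}$ presents $\fA$ as the inductive limit of the finite dimensional $C^*$-algebras $P_n$, i.e.\ $\fA$ is an AF-algebra. The main obstacle is exactly this density step: it is what rules out the pathological partial-isometry behaviour noted above, and it relies on the existence of an approximate unit, that is, on $\fA$ being a $C^*$-algebra and not just a JB*-triple that happens to embed into one.
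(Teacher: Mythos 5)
Your proof is correct, and it takes a genuinely different route from the paper's. The paper defines $\fA_n$ to be the $C^\ast$-subalgebra of $\fA$ generated by $Z_n$ --- so that density of $\bigcup_n\fA_n$ is automatic --- and disposes of finite dimensionality in one line, by observing that the inclusion $Z_n\hookrightarrow\fA$ induces a surjective $\ast$-morphism from the universal enveloping $C^\ast$-algebra of $Z_n$ onto $\fA_n$. You reject exactly this candidate and instead build $P_n=\lin(T_nT_n^\ast)$ from the finite dimensional sub-TRO $T_n$ generated by $\mu_n(Z_n)$, paying for the smaller subalgebras with a hands-on density argument via an approximate identity. The trade-off: the paper's argument is shorter, but it rests on the finite dimensionality of the universal enveloping $C^\ast$-algebra of a finite dimensional JC*-triple, which is precisely the point you distrust --- and your distrust has substance: the unilateral shift spans a one-dimensional JB*-subtriple of the Toeplitz algebra that generates the whole (infinite dimensional) Toeplitz algebra as a $C^\ast$-algebra, so in the Hilbertian case no finite dimensional $C^\ast$-algebra can be universal for all triple morphisms into $C^\ast$-algebras, and some further input is needed to make the paper's one-liner airtight. (Your heuristic that a ``nontrivial angle'' already forces infinite dimensionality overstates the failure, since two projections at a single fixed angle generate a finite dimensional algebra, but the substantive objection stands.) Your version uses only what the paper has already established --- finite dimensionality of $\tau(Z_n)$ and the lifting of JB*-morphisms into TROs to TRO-morphisms --- together with the approximate identity of $\fA$, and it correctly isolates the density step as the place where the hypothesis that $\fA$ is a genuine $C^\ast$-algebra, rather than merely a JC*-triple, enters.
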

This can be seen in the following way: Let $(Z_n)$ be a sequence of finite dimensional JB*-triples such that $\fA=\overline{\bigcup Z_n}$. If we define $\fA_n$ to be the $C^\ast $-algebra generated by $Z_n$ in $\fA$ for $n\in\N$, then $\fA_n$ is finite dimensional since the inclusion mapping of $Z_n$ into $\fA$ induces a surjective $*$-morphism from the universal enveloping $C^\ast $-algebra of $Z_n$ onto $\fA_n$.

\begin{lemma}\label{lemma below}
Each principal inductive limit $Z=\overline{\bigcup_{n=1}^\infty Z_n}$ is isomorphic to an inductive limit for which the connecting maps are in standard form.
\begin{proof}
We denote by $\vp_n:Z_n\to Z_{n+1}$ the morphisms given at the outset, and by $\sigma_n:W_n\to W_{n+1}$ the standard forms for
these mappings, for all $n\in \N$. Pick unitaries $U_n\in\cL\tau(Z_n)$ and $\widehat{U}_n\in\cR\tau(Z_n)$ so that
$\vp_n=U_{n+1}\sigma_n\widehat{U}_{n+1}$. Put $V_1=\widehat{V}_1=1$, $\psi_1=\id_{Z_1}$, and define, recursively,
a sequence of unitaries in $\cL\tau(Z_n)$ and $\cR\tau(Z_n)$ by letting
\begin{equation*}\
V_{n+1}=\left(\cL\vp_n(V_n)+1-\cL\vp_n(1)\right)U_{n+1}\qquad
\widehat{V}_{n+1}=\widehat{U}_{n+1}\left(\cR\vp_n(\widehat{V}_n)+1-\cR\vp_n(1)\right).
\end{equation*}
Define JB*-automorphisms $\psi_n$ of $Z_n$ through
$\psi_n(z)=V_n z\widehat{V}_{n}$.
Then, for all $z\in Z_n$,
\begin{multline*}
\vp_n\psi_n(z)=
\left(\cL\vp_n(V_n)+1-\cL\vp_n(1)\right)\vp_n(z)\left(\cR\vp_n(\widehat{V}_n)+1-\cR\vp_n(1)\right)=\\
=V_{n+1}\sigma_n(z)\widehat{V}_{n+1}=\psi_{n+1}\sigma_n(z),
\end{multline*}
giving the result.
\end{proof}
\end{lemma}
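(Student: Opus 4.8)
The plan is to run the Elliott-style intertwining argument: I will not change the spaces $Z_n$, but will replace each connecting map by a unitarily equivalent standard form, absorbing the discrepancy into a compatible family of automorphisms. Concretely, writing $\sigma_n\colon Z_n\to Z_{n+1}$ for a standard form of $\vp_n$ (available from Theorem~\ref{MorphClass}), I want JB*-automorphisms $\psi_n$ of $Z_n$ rendering the ladder
\[
\xymatrix{
Z_n \ar[r]^{\sigma_n}\ar[d]_{\psi_n} & Z_{n+1}\ar[d]^{\psi_{n+1}}\\
Z_n \ar[r]_{\vp_n} & Z_{n+1}
}
\]
commutative, i.e.\ $\vp_n\psi_n=\psi_{n+1}\sigma_n$ for every $n$. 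Such a family is precisely an isomorphism of the directed system $(Z_n,\sigma_n)$ onto $(Z_n,\vp_n)$; since isomorphisms of directed systems pass to the completed limits, this identifies $Z=\lim_{\longrightarrow}(Z_n,\vp_n)$ with $\lim_{\longrightarrow}(Z_n,\sigma_n)$, an inductive limit with standard-form connecting maps, which is the assertion.

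First I record the raw data. By Theorem~\ref{MorphClass} and the definition of standard form, each $\vp_n$ is unitarily equivalent to $\sigma_n$, so at the level of universal TROs there are unitaries $U_{n+1}\in\cL\tau(Z_{n+1})$ and $\widehat U_{n+1}\in\cR\tau(Z_{n+1})$ with $\vp_n(z)=U_{n+1}\sigma_n(z)\widehat U_{n+1}$. I seek $\psi_n$ in the form $\psi_n(z)=V_n z\,\widehat V_n$ with unitaries $V_n\in\cL\tau(Z_n)$, $\widehat V_n\in\cR\tau(Z_n)$; any such conjugation is a TRO-automorphism of $\tau(Z_n)$, and it restricts to a JB*-automorphism of $Z_n$ because for principal triples $Z_n=\set{z\in\tau(Z_n)}{\Phi_n(z)=z}$ and the recursion is compatible with the canonical antiautomorphism $\Phi_n$. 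Starting from $V_1=\widehat V_1=1$ (so $\psi_1=\id$), I define recursively
\[
V_{n+1}=\bigl(\cL\vp_n(V_n)+1-\cL\vp_n(1)\bigr)U_{n+1},\qquad
\widehat V_{n+1}=\widehat U_{n+1}\bigl(\cR\vp_n(\widehat V_n)+1-\cR\vp_n(1)\bigr).
\]

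The step I expect to be the crux is exactly the design of this recursion, and in particular the padding terms $1-\cL\vp_n(1)$ and $1-\cR\vp_n(1)$. The naive candidate $\cL\vp_n(V_n)$ is \emph{not} a unitary of $\cL\tau(Z_{n+1})$: it is only unitary on the corner cut out by the left support projection $\cL\vp_n(1)$, and likewise on the right. Adding the complementary identity restores genuine unitarity without affecting the computation, because $\vp_n(z)$ is supported in that corner, $\vp_n(z)=\cL\vp_n(1)\vp_n(z)\cR\vp_n(1)$, so the padding is absorbed. The verification of the intertwining then rests on the module-compatibility of TRO-morphisms, $\tau(\vp_n)(a z b)=\cL\vp_n(a)\,\vp_n(z)\,\cR\vp_n(b)$ for $a\in\cL\tau(Z_n)$, $b\in\cR\tau(Z_n)$: expanding $\vp_n\psi_n(z)=\cL\vp_n(V_n)\,\vp_n(z)\,\cR\vp_n(\widehat V_n)$, inserting the padding, and substituting $\vp_n(z)=U_{n+1}\sigma_n(z)\widehat U_{n+1}$ telescopes the expression into $V_{n+1}\sigma_n(z)\widehat V_{n+1}=\psi_{n+1}\sigma_n(z)$, as required. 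The remaining routine points --- that each $\psi_n$ indeed preserves $Z_n$, and that a commuting ladder of isomorphisms induces an isomorphism of the two completed inductive limits --- I would dispatch by the reversibility characterization of $Z_n$ and by the universal property of the limit, respectively.
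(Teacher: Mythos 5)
Your proposal is correct and follows essentially the same route as the paper: the identical recursion for $V_{n+1}$ and $\widehat V_{n+1}$ with the padding terms $1-\cL\vp_n(1)$ and $1-\cR\vp_n(1)$, and the same intertwining computation $\vp_n\psi_n=\psi_{n+1}\sigma_n$. Your added remarks on why the padding is needed for genuine unitarity and on the passage to the completed limits are sound elaborations of points the paper leaves implicit.
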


\begin{thm}\label{AF-Triple klassifizierung}
If $Z$ and $W$ are principal AF-triples and $\gamma:K_0(Z)\rightarrow K_0^{\text{\tiny JB*}}(W)$ is an isomorphism respecting the $K^\pm_0$-invariant, then there exists a JB*-isomorphism $\vp:Z\to W$ with $K_0^{\text{\tiny JB*}}(\varphi)=\sigma$.
\begin{proof}
Suppose that $Z$ and $W$ as inductive limits are given by $\zeta_n:Z_{n-1}\to Z_n$ and $\eta_n:W_{n-1}\to W_n$,
respectively.
By Lemma~\ref{lemma below}, we may suppose that the connecting morphisms are in standard form.
By Propopsition~\ref{Kpm and limit}, $K_0(Z)=\bigcup_{n\in\N_0}K_0(Z_n)$
and $K_0(W)=\bigcup_{n\in\N_0}K_0(W_n)$.
Consider the restriction $\gamma_1$ of $\gamma$ to $K_0(Z_0)$. Since $K_0(Z_0)$ is a finitely generated $\Z$-module
(and $\gamma$ a $\Z$-module map),
its image under $\gamma_1$ is one as well, and the image of $\gamma_1$
must be contained in some $K_0(W_{n_1})$, for $n_1\in\N$ large enough.
Invoking Proposition~\ref{Kpm and limit} as well as the additivity of the $K^\pm$-invariant,
\begin{equation*}
    \gamma_1\left(\Delta^\pm(Z_0)\right)\sub \bigcup_{n\in\N_0}\Delta^\pm\left(W_n\right)\qquad\text{and}\qquad
    \gamma_1\left(\Sigma(Z_0)\right)\sub \bigcup_{n\in\N_0}\Sigma\left(W_n\right).
\end{equation*}
Because each of the sets $\Delta^\pm(Z_0),\Sigma(Z_0)$ is finite, we may enlarge $n_1$, if necessary, and
assume that
\begin{equation*}
    \gamma_1\left(\Delta^\pm(Z_0)\right)\sub \Delta^\pm\left(W_{n_1}\right)\qquad\text{and}\qquad
    \gamma_1\left(\Sigma(Z_0)\right)\sub \Sigma\left(W_{n_1}\right).
\end{equation*}
As $\gamma$ intertwines the
canonical involutions on the involved K-groups, Propopsition~\ref{Kpm and limit} shows that $\gamma_1$ intertwines
the canonical involutions of $K_0(Z_0)$ and $K_0(W_{n_1})$, respectively.
Consequently, $\gamma_1$ is a morphism of commutative groups of the type that was addressed in Theorem~\ref{JB*-complete invariant},
and it follows that there is a triple morphism $\phi_1:Z_0\to W_{n_1}$ with $K_0(\phi_1)=\gamma_1$. Proceeding
inductively, we find a strictly increasing sequence $(n_k)$ of integers with $n_0=0$, as well as triple morphisms
$\phi_k:Z_{n_{k-1}}\to W_{n_k}$. Let $\gamma_k=K_0(\phi_k)$. Then
\begin{equation*}
\text{
\begin{xy}
  \xymatrix{
      \ldots \ar[r] & K_0(Z_{n_{k-1}}) \ar[r] \ar[rd]_{\gamma_k}&\ldots\ar[r]   & K_0(Z_{n_k})\ar[r]\ar[dr]_{\gamma_{k+1}}
                                           &\ldots\ar[r]       &\ldots\ar[r]  & K_0(Z) \ar[d]^{\gamma} \\
                    & \ldots \ar[r]        & K_0(W_{n_k})\ar[r] &\ldots\ar[r]
                                           & K_0(W_{n_{k+1}})\ar[r] & \ldots\ar[r] & K_0(W)
  }
\end{xy}
}
\end{equation*}
commutes. Let
$
    \widetilde{\zeta}_{k+1}=\prod_{\nu=n_k+1}^{n_{k+1}}\zeta_\nu
$
and
$
    \widetilde{\eta}_{k+1}=\prod_{\nu=n_k+1}^{n_{k+1}}\eta_\nu
$ so that
\begin{equation*}
K_0(\widetilde{\eta}_{k+1}\phi_k)=K_0(\phi_{k+1}\widetilde{\zeta}_k)
\end{equation*}
We then determine inductively unitaries $U_k$ and $V_k$ with the property that
if we let $\widetilde{\phi}_k= U_k\phi_k V_k$ we have
\begin{equation*}
   \widetilde{\eta}_{k+1}\widetilde{\phi}_k=\widetilde{\phi}_{k+1}\widetilde{\zeta}_k.
\end{equation*}
It follows that there is a morphism $\phi$ sucht that
\begin{equation*}
\text{
\begin{xy}
  \xymatrix{
      \ldots \ar[r] & Z_{n_{k-1}} \ar[r] \ar[rd]_{\widetilde{\phi}_k}&\ldots\ar[r]   & Z_{n_k}\ar[r]\ar[dr]_{\widetilde{\phi}_{k+1}}
                                           &\ldots\ar[r]       &\ldots\ar[r]  & Z \ar[d]^{\widetilde{\phi}}\\
                    & \ldots \ar[r]        & W_{n_k}\ar[r] &\ldots\ar[r]
                                           & W_{n_{k+1}}\ar[r] & \ldots\ar[r] & W
  }
\end{xy}
}
\end{equation*}
commutes, implying $K_0(\widetilde{\phi})=\gamma$.
\end{proof}
\end{thm}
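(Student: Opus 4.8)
The plan is to run \textsc{Elliott}'s approximate intertwining argument in the category of JB*-triples, with Theorem~\ref{JB*-complete invariant} playing the role of the finite dimensional existence-and-uniqueness statement and Proposition~\ref{Kpm and limit} supplying continuity of the invariant. First I would realize $Z$ and $W$ as inductive limits of finite dimensional principal JB*-triples along connecting morphisms $\zeta_n\colon Z_{n-1}\to Z_n$ and $\eta_n\colon W_{n-1}\to W_n$; by Lemma~\ref{lemma below} there is no loss in assuming that all of these are in standard form. Proposition~\ref{Kpm and limit} then identifies $K_0(Z)=\bigcup_n K_0(Z_n)$ and $K_0(W)=\bigcup_n K_0(W_n)$, and exhibits the scales, the $\Delta^\pm$-sets and the canonical involutions of the limits as the directed unions, respectively the direct limit, of the corresponding data on the finite stages.

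Next I would build a two-sided (back-and-forth) intertwining. Restricting $\gamma$ to $K_0(Z_0)$, which is a finitely generated $\Z$-module, its image is finitely generated and hence contained in some $K_0(W_{m_1})$. Because the marked subsets $\Sigma(Z_0)$ and $\Delta^\pm(Z_0)$ are \emph{finite}, after enlarging $m_1$ I can arrange that $\gamma$ carries them into $\Sigma(W_{m_1})$ and $\Delta^\pm(W_{m_1})$; Proposition~\ref{Kpm and limit} also guarantees that the restricted map intertwines the canonical involutions of $K_0(Z_0)$ and $K_0(W_{m_1})$. The restriction is therefore a $K^\pm$-morphism of the kind handled by Theorem~\ref{JB*-complete invariant}, and that theorem produces a JB*-morphism $\phi_1\colon Z_0\to W_{m_1}$, unique up to unitary equivalence, inducing it. Applying the identical reasoning to $\gamma^{-1}$ restricted to $K_0(W_{m_1})$ yields, after enlarging an index $n_1$, a morphism $\psi_1\colon W_{m_1}\to Z_{n_1}$, and iterating produces a zig-zag of morphisms whose triangles commute on the level of the $K^\pm$-invariant.

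The decisive step is to upgrade this K-theoretic commutativity to genuine commutativity of JB*-morphisms. At each stage the two composites in a triangle---say $\psi_1\phi_1$ and the connecting map $Z_0\to Z_{n_1}$---induce the \emph{same} map on $K^\pm_0$, since both equal the appropriate restriction of $\gamma^{-1}\gamma=\id$; by the uniqueness clause of Theorem~\ref{JB*-complete invariant} they therefore agree up to unitary equivalence. I would then absorb the correcting unitaries into the next morphism, exactly as in the recursive bookkeeping of Lemma~\ref{lemma below} (the prescription $V_{n+1}=\left(\cL\vp_n(V_n)+1-\cL\vp_n(1)\right)U_{n+1}$ together with its right-hand analogue), so that the \emph{corrected} morphisms $\widetilde{\phi}_k,\widetilde{\psi}_k$ make the diagram commute on the nose. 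Since inner automorphisms of the linking algebra act trivially on K-theory, these corrections do not disturb the identity $K_0(\widetilde{\phi}_k)=\gamma_k$. Passing to the limit of the now exactly commuting approximate intertwining yields mutually inverse JB*-isomorphisms $\phi\colon Z\to W$ and $\psi\colon W\to Z$, and by construction $K_0^{\text{\tiny JB*}}(\phi)=\gamma$.

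I expect the main obstacle to be precisely this promotion from K-theory to honest morphisms: Theorem~\ref{JB*-complete invariant} only delivers uniqueness up to unitary equivalence, and the unitaries live in the left and right C*-algebras of the universal TROs rather than in the triples themselves. Keeping track of them simultaneously for $\cL$ and $\cR$, ensuring compatibility with the canonical antiautomorphism so that the corrections remain honest \emph{triple} morphisms, and arranging the index choices $m_k,n_k$ so that all finite marked subsets continue to map correctly after each enlargement, is where the real care is needed; the continuity statements of Proposition~\ref{Kpm and limit} are what make these repeated finite enlargements legitimate.
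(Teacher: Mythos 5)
Your proposal follows essentially the same route as the paper's proof: Elliott's intertwining argument, with Lemma~\ref{lemma below} supplying standard forms, Proposition~\ref{Kpm and limit} supplying continuity of the invariant, Theorem~\ref{JB*-complete invariant} supplying existence and uniqueness at the finite stages, and unitary corrections upgrading K-theoretic commutativity to exact commutativity of the intertwining diagram. If anything, your explicit two-sided back-and-forth via $\gamma^{-1}$ is more careful than the paper's one-sided diagram, since it is what actually justifies that the limiting morphism is an isomorphism rather than merely a morphism inducing $\gamma$.
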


\begin{prop}\label{SH-case}
Let $Z_\infty$ be the inductive limit of a sequence of finite dimensional JB*-triple systems $(Z_n,\vp_n)$.
\begin{description}
\item[(a)]
If each $Z_n$ is a spin factor, then $Z_\infty$ is a spin factor.
\item[(b)]
If all $Z_n$ are finite direct sums of finite dimensional Hilbertian factors, then $Z_\infty$ is a
direct sum of Hilbertian factors.
\item[(c)]
If all $Z_n$ are finite direct sums of finite dimensional spin factors, then $Z_\infty$ is a direct sum of spin factors.
\end{description}
\begin{proof}
We start with (a).
According to Theorem\ref{MorphClass}(iv) we may write $\vp_n(z)=\mu_n U_n(z)$ with $\mu_n$ complex, of modulus one, and $U_n$ a
unitary embedding which respects involutions. It is straightforward to check that the inductive system $(Z_n,U_n)$ has a spin factor $Z_{\infty,0}$ as limit. Let $\nu_1=\id_{Z_1}$ and, for $k\geq 2$,
\begin{equation*}
    \nu_k: Z_k\to Z_k,\quad z\mapsto \overline{\mu}_1\cdots\overline{\mu}_{k-1}z.
\end{equation*}
Then
\begin{equation*}
\begin{xy}
  \xymatrix{
      Z_{1} \ar[r]^{\vp_1} \ar[d]_{\id_{Z_1}}  & Z_2\ar[r]^{\vp_2}\ar[d]_{\nu_2} & Z_3\ar[r]^{\vp_3}\ar[d]_{\nu_3}&\ldots\ar[r]&Z_\infty   \\
        Z_{1} \ar[r]_{U_1} &  Z_2 \ar[r]_{U_2}
        & Z_3\ar[r]_{U_3} & \ldots\ar[r] & Z_{\infty,0}
  }
\end{xy}
\end{equation*}
commutes, and $Z_\infty$ is isomorphic to the spin factor $Z_{\infty,0}$.

The proofs of (b) and (c) are fairly standard arguments. We indicate how to obtain (c).
Write $Z_n$ as a direct sum of spin factors, $Z_n=S_1^n\oplus\ldots\oplus S^n_{k_n}$. If we choose from every level $n\in\N$ exactly one index $\alpha_n$ with $1\leq\alpha_n\leq k_n$, we get a path $I:=(\alpha_i)_{i\in\N}$.
For each path $I=(\alpha_i)$ denote by $(S_{\alpha_i}^I)$ the sequence of corresponding spin factors and write
$\vp^I_{\alpha_i}$ for the result of restricting and compressing $\vp_i$ to a JB*-morphism
$S_{\alpha_i}^I\to S_{\alpha_{i+1}}^I$. We thus obtain an inductive system $(S_{\alpha_i}^I,\vp_{\alpha_i}^I)$ whose
limit we denote by $S^I$. Unless $S^I=0$, all maps $\vp_{\alpha_i}^I$ are injective, and thus, by (a), $S^I$ is a spin factor.
Identify paths if they eventually coincide and denote the resulting set of equivalence classes $[I]$ by $\mathcal{I}$.
We may define $S^{[I]}=S^{I}$, and then find
\begin{equation*}
Z_\infty=\bigoplus_{[I]\in\mathcal{I}}S^{[I]}
\end{equation*}
\end{proof}
\end{prop}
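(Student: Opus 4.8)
The plan is to prove (a) directly and then bootstrap (b) and (c) from it by a path-counting argument over the indecomposable summands.

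For (a), I would invoke Theorem~\ref{MorphClass}(ii)(b), which lets me write each connecting map as $\vp_n=\mu_n U_n$ with $|\mu_n|=1$ and $U_n$ an isometry for the canonical Hilbert space structures satisfying $U_n(x^\ast)=U_n(x)^\ast$. Passing first to the auxiliary system $(Z_n,U_n)$, the maps form a conjugation-preserving isometric chain, so the algebraic limit inherits both the scalar product and the canonical involution; its completion is a complex Hilbert space equipped with a conjugation. Since a spin factor is exactly such a space with triple product determined by the scalar product and the conjugation, and since that product is continuous, it descends to the completion, so the limit $Z_{\infty,0}$ of $(Z_n,U_n)$ is a (possibly infinite dimensional) spin factor. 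To recover $Z_\infty$ I would absorb the phases: because the middle slot of the triple product is conjugate linear, $z\mapsto\lambda z$ with $|\lambda|=1$ is a triple automorphism, so the maps $\nu_k(z)=\overline{\mu}_1\cdots\overline{\mu}_{k-1}z$ are automorphisms of $Z_k$ intertwining the $\vp_n$ with the $U_n$. This yields an isomorphism of inductive systems and hence $Z_\infty\cong Z_{\infty,0}$.

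For (b) and (c) I would decompose each $Z_n$ into its (essentially unique) indecomposable summands and encode the connecting maps by their components between summands, in the manner of a Bratteli diagram. A \emph{path} selects one summand at each level; restricting and compressing the $\vp_n$ along a path gives an inductive system of single factors whose limit $S^I$ is, by (a) in the spin case and by its Hilbertian analogue in case (b), either $0$ or again a factor of the same type. Identifying paths that eventually agree, one assembles the candidate decomposition $Z_\infty=\bigoplus_{[I]}S^{[I]}$.

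The main obstacle is the final identification: one must show that the $S^{[I]}$ sit inside $Z_\infty$ as mutually orthogonal subtriples whose union is dense, and not merely as quotients. This is exactly the point where the multiplicity-one rigidity of Theorem~\ref{MorphClass}(ii) and the injectivity of the surviving compressions have to be exploited, so that distinct eventual path-classes produce orthogonal tripotents and every element of $Z_\infty$ is approximated within finitely many summands. Controlling how a single summand may feed several summands at the next level, together with the residual phases from the spin case, is the delicate part; once orthogonality and totality are established, the $c_0$-direct sum structure follows from the norm on JB$^*$-triple direct sums and the decomposition is complete.
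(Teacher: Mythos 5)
Your argument for (a) is essentially identical to the paper's: both factor each connecting map as $\vp_n=\mu_nU_n$ using the classification of morphisms between spin factors (Theorem~\ref{MorphClass}(ii)(b); the paper's citation of ``(iv)'' is a slip), observe that the phase-free system $(Z_n,U_n)$ has a spin factor as its limit, and then absorb the phases through the automorphisms $\nu_k(z)=\overline{\mu}_1\cdots\overline{\mu}_{k-1}z$ to obtain a commuting ladder and hence $Z_\infty\cong Z_{\infty,0}$. Your justification that $Z_{\infty,0}$ is a spin factor (the $U_n$ preserve the scalar product and the conjugation, so the completed limit is a Hilbert space with conjugation on which the triple product extends by continuity) is in fact more explicit than the paper's ``straightforward to check''. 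For (b) and (c) you likewise follow the paper's route: decompose each level into indecomposable summands, form paths through the resulting Bratteli diagram, apply (a) (or its Hilbertian analogue) along each path, and identify tail-equivalent paths.

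The point to press you on is the step you yourself flag as ``the delicate part'': that the path limits $S^{[I]}$ sit inside $Z_\infty$ as mutually orthogonal subtriples with dense linear span. The paper does not prove this either --- it simply writes down $Z_\infty=\bigoplus_{[I]\in\mathcal{I}}S^{[I]}$ --- but it is not a formality. If a summand is permitted to embed diagonally into two summands at every level (for instance $S\to S\oplus S\to S^{\oplus 4}\to\cdots$, each factor mapping isometrically into two factors above it), then the set of tail classes of paths is uncountable while $Z_\infty$ is separable; the limit is a continuous field of the form $C(\{0,1\}^{\N},S)$ rather than a $c_0$-sum of spin factors. So the asserted decomposition needs either an argument that persistent branching cannot occur in the diagrams that actually arise, or an explicit additional hypothesis on the connecting maps. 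Leaving the conclusion at ``once orthogonality and totality are established'' does not close this; it is the one substantive gap in your proposal, and it is shared with the paper's own proof.
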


\paragraph{Proof of Theorem~\ref{main}}
We call a class of factors (which we denote here by P, E, S, or H, respectively) \emph{cul-de-sac} if it is not possible
to find a non-trivial JB*-morphism from an object of another class to an object of this class or if it is not possible
to find a non-trivial JB*-morphism from an object of this class to an object of another class. If a class is cul-de-sac,
then it does not interfere with the other classes in the inductive limit. Its fairly obvious that $E$ is cul-de-sac.
Moreover $H$ is cul-de-sac, because all Hilbertian triple systems are of $\rank$ 1 and it is not possible to embed
members of other classes into them. The spin factor, which are the objects from $S$ are all of $\rank$ 2. Thus it is
only possible to embed elements of $H$ and type I factors whose rank equals 2. In both cases it is not possible to go
back to theses classes due to the rank results in section 1. For the same reason, it is impossible to enter another class
starting from $P$, and so this is a cul-de-sac, too.

The result now follows from combining Theorem~\ref{AF-Triple klassifizierung} with Proposition~\ref{SH-case}.

\section{Principal AF-triples}

\begin{cor}\label{principal cor}
An AF-triple $Z$ is principal iff it is universally reversible.
\begin{proof}
Each direct summand of a
universally reversible JC*-triple inherits this property and so, a universally reversible AF-triple must be principal, as a consequence of \cite[Theorem 5.6]{BunceFeelyTimoneyI} and Theorem~\ref{main}.
Since all Cartan factors appearing in a principal AF-triple are universally reversible, the converse follows easily.
\end{proof}
\end{cor}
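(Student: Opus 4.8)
The plan is to prove the two implications separately: the easier direction, that a principal AF-triple is universally reversible, follows directly from the inductive-limit results already assembled, while the harder direction rests on the structure theorem together with a characterization of which Cartan factors are universally reversible.

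First I would treat principal $\Rightarrow$ universally reversible. Writing $Z=\overline{\bigcup_n Z_n}$ with each $Z_n$ a finite direct sum of hermitian, symplectic and non-Hilbertian rectangular factors, I note that every such factor is of type I--III and therefore universally reversible, by the characterization $\rho_Z(Z)=\set{z\in\tau(Z)}{\Phi(z)=z}$ recorded in Section~2. Since $\tau$ is additive (\cite[Theorem 3.2]{BoWe1}) and the canonical antiautomorphism respects direct sums, a finite direct sum of universally reversible triples is again universally reversible, so each $Z_n$ has this property. The conclusion then follows at once from Lemma~\ref{Limits of automorphisms}, which states precisely that an inductive limit of finite-dimensional universally reversible JC*-triples is universally reversible.

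For the converse, universally reversible $\Rightarrow$ principal, I would apply Theorem~\ref{main} to obtain $Z=P\oplus E\oplus S\oplus H$ and then show that universal reversibility descends to each summand. Using additivity of $\tau$ and the uniqueness, hence additivity, of the canonical antiautomorphism $\Phi$ from \cite[Theorem 3.5]{BunceFeelyTimoneyI}, one has $\tau(Z)=\tau(P)\oplus\tau(E)\oplus\tau(S)\oplus\tau(H)$ with both $\Phi$ and the embedding $\rho_Z$ splitting along this decomposition; hence the identity $\rho_Z(Z)=\set{z}{\Phi(z)=z}$ forces the analogous identity on every summand. Applying the same reasoning to the completed direct sums making up $E$, $S$ and $H$, each individual exceptional, spin or Hilbertian factor that occurs must itself be universally reversible. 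Invoking \cite[Theorem 5.6]{BunceFeelyTimoneyI} to rule these out --- in particular the exceptional factors, whose universal TRO is $0$, cannot be universally reversible unless they vanish --- I conclude $E=S=H=0$, so that $Z=P$ is principal.

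I expect the main obstacle to be this descent to summands combined with the bookkeeping for spin and Hilbertian factors, where certain low-dimensional factors coincide with type I--III factors: the spin factors of dimensions $3$, $4$ and $6$ are $C^{\textrm{III}}_2$, $C^{\textrm{I}}_{2,2}$ and $C^{\textrm{II}}_4$ and \emph{are} universally reversible. This is reconciled by the convention in Theorem~\ref{main} that such coincidental factors are absorbed into the principal part $P$, so that the summands $S$ and $H$ retain only genuinely spin and genuinely Hilbertian factors; the delicate point is therefore to cite \cite[Theorem 5.6]{BunceFeelyTimoneyI} in exactly the form that excludes these. A secondary technicality is that additivity of $\tau$ must be available for the completed, possibly infinite, direct sums produced by the structure theorem, which is the content of \cite[Theorem 3.2]{BoWe1}.
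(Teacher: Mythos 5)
Your proposal is correct and follows essentially the same route as the paper: the forward direction from the universal reversibility of the type I--III Cartan factors passed through the inductive limit (Lemma~\ref{Limits of automorphisms}), and the converse from the decomposition of Theorem~\ref{main} together with the fact that direct summands inherit universal reversibility and \cite[Theorem 5.6]{BunceFeelyTimoneyI} excludes the exceptional, spin and Hilbertian summands. Your extra care about the low-dimensional spin factors coinciding with $C^{\textrm{III}}_2$, $C^{\textrm{I}}_{2,2}$ and $C^{\textrm{II}}_4$ is a sensible refinement that the paper's terse proof leaves implicit.
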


\begin{defi}
Suppose $\left((Z_\lambda),(\varphi_\lambda)\right)$ is an inductive sequence
of finite di\-men\-sional JB*-triple systems and write $(\alpha^\lambda_{ij})$ for the
multiplicity matrix of $\vp_\lambda$. The \emph{ter\-na\-ry Bratteli diagram}\index{ternary Bratteli diagram}
of $\left((Z_\lambda),(\varphi_\lambda)\right)$ is a weighted, colored graph with the following properties.
\begin{description}
\item[(a)]
Vertices are arranged into rows $V_\lambda$, one vertex for each single factor going into $Z_\lambda$.
\item[(b)]
Each vertex $v$ is colored by a pair $(h,n)$. Here, $h$ is taken from six different hues, \ding{182},$\ldots,$\ding{187},
representing the Cartan factors, whereas $n$ is the dimension of the Cartan factor represented by $v$.
\item[(c)]
The subgraph consisting of vertices in $V_\lambda\cup V_{\lambda+1}$ and the edges between them is obtained from letting $(\alpha^\lambda_{ij})$ be its adjacency matrix, i.e.\ the $i$th vertex in $V_\lambda$ and the $j$th vertex in $V_{\lambda+1}$ are connected by an
edge of weight $\alpha^\lambda_{ij}$ (and no edge should $\alpha^\lambda_{ij}=0$).
\end{description}
\end{defi}


\begin{thm}
Let $(Z_n)$ and $(W_n)$ be increasing sequences of finite dimensional principal $JC^\ast $-triples.
If $Z=\overline{\bigcup_{n=1}^{\infty}Z_n}$ and $W=\overline{\bigcup_{n=1}^{\infty}W_n}$ have the same ternary Bratteli diagram, then they are isomorphic.
\begin{proof}
As Bratteli Diagrams encode precisely the standard form of the connecting maps, the result follows from
Lemma~\ref{lemma below} below.
\end{proof}
\end{thm}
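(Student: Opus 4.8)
The plan is to make both inductive systems maximally concrete by putting every connecting morphism into standard form, and then to observe that sharing a ternary Bratteli diagram forces the two resulting systems to coincide level by level, whence their limits must agree. Lemma~\ref{lemma below} is exactly what licenses the first reduction, and Theorem~\ref{MorphClass} is what makes the second step possible.

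First I would apply Lemma~\ref{lemma below} to each of $(Z_n)$ and $(W_n)$, replacing them by isomorphic inductive systems whose connecting maps are in standard form. This alters neither limit, and it leaves the two ternary Bratteli diagrams untouched: the factors $Z_n$ and $W_n$ are unchanged, and since a standard form is by construction unitarily equivalent to the original connecting map, the multiplicity matrices---hence all vertex colors and edge weights---are preserved. Thus I may assume from the outset that every $\vp_n$ and every connecting map of $(W_n)$ is of standard form.

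The heart of the matter is that, once everything is in standard form, the ternary Bratteli diagram recovers the system exactly. A vertex color $(h,n)$ fixes the type and size of the corresponding Cartan factor, hence that factor up to JB*-isomorphism; the edge weights between two consecutive rows record the multiplicity matrix of the connecting morphism. By Theorem~\ref{MorphClass} together with the definition of standard form, a standard-form morphism between the Cartan factors of type I--III occurring in a principal triple is completely determined, up to unitary equivalence, by the types and sizes of its source and target and by its multiplicity matrix. Consequently an isomorphism of the two colored, weighted diagrams supplies, for each level $\lambda$, a color-preserving bijection between the factors of $Z_\lambda$ and those of $W_\lambda$ under which corresponding connecting morphisms have identical standard form. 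Reading this off level by level exhibits the two inductive systems as isomorphic, and isomorphic inductive systems have isomorphic limits, so $Z\cong W$.

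I expect the main obstacle to lie in this dictionary, that is, in checking that the purely combinatorial data of the diagram is genuinely equivalent to the unitary-equivalence class of each connecting map. Two points need care. First, the color must record enough to determine a rectangular factor $C^{\mathrm{I}}_{m,n}$ up to isomorphism---its pair of matrix sizes, not merely the product $mn$---and the weight joining two rectangular vertices must encode the full multiplicity datum, namely the ordered pair $(\alpha,\beta)$ of Theorem~\ref{MorphClass}, rather than a single integer. Second, one must confirm that the uniqueness clause of Theorem~\ref{MorphClass} applies in each admissible type transition among I, II and III, so that the diagram never leaves a connecting map ambiguous. Once this correspondence between diagram data and standard forms is in place, the isomorphism of limits is automatic.
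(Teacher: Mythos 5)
Your proposal follows essentially the same route as the paper's (one-sentence) proof: reduce both systems to standard form via Lemma~\ref{lemma below}, then observe that the ternary Bratteli diagram determines the standard form of each connecting map, so the two systems agree level by level and their limits are isomorphic. Your caveat that an edge between two rectangular vertices must carry the ordered pair $(\alpha,\beta)$ rather than a single integer is a fair observation about the diagram's definition, but it does not alter the argument.
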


\begin{prop}
Let $Z$ be a  JB*-triple system. Then the following are equivalent:
\begin{description}
\item[(a)] $Z$ is a principal AF-triple.
\item[(b)] $Z$ is separable and for every $\varepsilon >0$ and for all $z_1,\ldots,z_n\in Z$ there exists a finite dimensional principal subtriple $W\sub Z$ such that $\dist(z_i, W)<\varepsilon$ for $i\in\{1,\ldots,n\}$.
\end{description}
\begin{proof}
Clearly, for a principal AF-triple $Z$, (b) is true. Suppose, conversely, that (b) holds.
As (the canonical image of) $Z$ generates $\Li(\tau(Z))$
as a C*-algebra, it follows
that $\Li(\tau(Z))$ satisfies (b), with `principal subtriples' replaced by `sub-C*-algebras'. Hence, according to \cite{Bratteli-InductivelimitsoffinitedimensionalCalgebras}, $\Li(\tau(Z))$ is an AF-algebra, and there is an increasing
sequence $(A_n)$ of finite dimensional sub C*-algebras of $\Li(\tau(Z))$ with $\overline{\bigcup_n A_n}=\Li(\tau(Z))$.
Let
\begin{equation*}
    p=\begin{pmatrix}1 & 0\\0 & 0\end{pmatrix}\in
    \Li(\tau(Z))=\begin{pmatrix}\cR(\tau(Z))& \tau(Z)\\
    \tau(Z)^\ast  &\cL(\tau(Z))\end{pmatrix},
\end{equation*}
and put $P(a)=pa(1-p)$. Then $(T_n)=(P(A_n))$ is an increasing sequence of sub-TROs of $\tau(Z)$.
If, for $z\in Z$, $(t_n)$ is a sequence with $t_n\in T_n$ and $t_n\to z$, then $z_n=P(t_n)$
is in $Z_n$, the sequence $(z_n)$ converges to $z$, and so $\overline{\bigcup_n T_n}=\tau(Z)$.
We now use the fact that $Z$ is principal. By Corollary~\ref{principal cor} and \cite[Lemma 4.2]{BunceFeelyTimoneyI},
the projection $\Pi=1/2(\id+\Phi)$, where $\Phi:\tau(Z)\to \tau(Z)$ is the canoncial antiautomorphism,
maps sub-TROs of $\tau(Z)$ onto subtriples of $Z$. Then, quite like before, $\overline{\bigcup_n \Pi(T_n)}=Z$.
\end{proof}
\end{prop}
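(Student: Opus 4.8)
The plan is to prove the two implications separately, with essentially all the content sitting in (b)$\Rightarrow$(a). For (a)$\Rightarrow$(b) there is nothing to do beyond unwinding definitions: if $Z=\overline{\bigcup_n\mu_n(Z_n)}$ is a principal AF-triple, then $Z$ is separable as the closure of a countable union of finite dimensional subspaces, and given $z_1,\dots,z_k\in Z$ and $\varepsilon>0$ the density of $\bigcup_n\mu_n(Z_n)$ lets me pick $N$ with $\dist(z_i,\mu_N(Z_N))<\varepsilon$ for all $i$; the subtriple $W=\mu_N(Z_N)$ is finite dimensional and principal, so (b) holds.

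For (b)$\Rightarrow$(a) the idea is to push Bratteli's local characterisation of AF C*-algebras through the linking algebra and then descend back to the triple. First I would observe that, since $\rho_Z(Z)$ generates $\Li(\tau(Z))$ as a C*-algebra, hypothesis (b) forces the corresponding local finite-dimensionality of $\Li(\tau(Z))$: any finite family in the linking algebra is approximated by $\ast$-polynomials in elements of $\rho_Z(Z)$, each such element is approximated by a finite dimensional principal subtriple $W$ via (b), and the C*-algebra generated by the finite dimensional $W$ is again finite dimensional (the fact used earlier in this section). Separability is inherited in the same way. Bratteli's theorem then produces an increasing sequence $(A_n)$ of finite dimensional sub-C*-algebras of $\Li(\tau(Z))$ with dense union.

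I would then descend in two stages. The corner map $P(a)=pa(1-p)$ attached to $p=\left(\begin{smallmatrix}1&0\\0&0\end{smallmatrix}\right)$ turns the $A_n$ into finite dimensional sub-TROs $T_n=P(A_n)$ of $\tau(Z)$ with $\overline{\bigcup_nT_n}=\tau(Z)$. For the second stage I would use the canonical antiautomorphism $\Phi$ of $\tau(Z)$ and the projection $\Pi=\tfrac12(\id+\Phi)$: once $Z$ is known to be universally reversible, the reversibility criterion \cite[Lemma 4.2]{BunceFeelyTimoneyI} identifies $\rho_Z(Z)$ with the fixed-point set $\{z:\Phi(z)=z\}=\Pi(\tau(Z))$, and $\Pi$ maps ($\Phi$-invariant) sub-TROs onto finite dimensional subtriples. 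Applying $\Pi$ to the $T_n$ then yields an increasing sequence of finite dimensional principal subtriples with $\overline{\bigcup_n\Pi(T_n)}=Z$, which exhibits $Z$ as a principal AF-triple. That $Z$ is universally reversible I would obtain directly from (b): the inclusion $\iota\colon W\hookrightarrow Z$ of each principal (hence universally reversible) $W$ satisfies $\tau(\iota)\Phi_W=\Phi_Z\tau(\iota)$ by Lemma~\ref{real form properties}, so $\Phi_Z$ fixes the dense set $\bigcup_W\tau(\iota)(W)$ and therefore all of $\rho_Z(Z)$, and the reversibility criterion applies.

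The main obstacle is the symmetric descent, not the C*-algebraic input. For $\Pi(T_n)$ to be a genuine subtriple I need each $T_n$ to be $\Phi$-invariant, equivalently each $A_n$ invariant under the order-two symmetry of $\Li(\tau(Z))$ induced by $\Phi$ (here $\Phi$ is an involution, so $\Pi$ is an honest projection onto the fixed points). I would therefore run Bratteli's construction equivariantly, replacing each approximating subalgebra by the finite dimensional subalgebra generated by $A_n$ together with its image under this $\Z_2$-symmetry, so that the resulting $T_n$ are $\Phi$-invariant and $\Pi(T_n)=T_n\cap\rho_Z(Z)$. The last point — that each finite dimensional $\Phi$-invariant subtriple $\Pi(T_n)$ is actually \emph{principal}, i.e. a sum of type I–III factors — then follows from its universal reversibility, which in finite dimensions rules out the Hilbertian, spin and exceptional factors.
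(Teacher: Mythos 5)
Your route is the paper's own: feed (b) into Bratteli's local characterisation via the linking algebra $\Li(\tau(Z))$, cut the resulting finite dimensional subalgebras $A_n$ down to sub-TROs $T_n=P(A_n)$ of $\tau(Z)$ with the corner map, and descend to $Z$ with $\Pi=\frac{1}{2}(\id+\Phi)$. You are also right that the last step needs the $T_n$ to be $\Phi$-invariant --- for a non-invariant sub-TRO such as the first row of $\M_2$ with $\Phi$ the transpose (so $Z=C^{\textrm{III}}_2$), the set $\Pi(T)$ contains $E_{11}$ and $E_{12}+E_{21}$ but not $\{E_{12}+E_{21},E_{11},E_{12}+E_{21}\}=E_{22}$, hence is not a subtriple --- a point the published proof passes over in silence.

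The step of yours that fails as written is the equivariant regularisation. The C*-subalgebra of $\Li(\tau(Z))$ generated by $A_n$ together with its image under the order-two symmetry induced by $\Phi$ need not be finite dimensional: two finite dimensional subalgebras of an infinite dimensional C*-algebra --- already two projections in general position --- can generate an infinite dimensional one, and an order-two symmetry of an AF algebra need not admit any invariant local approximation at all (Blackadar's symmetry of the CAR algebra with non-AF fixed-point algebra shows this obstruction is not a removable technicality). The invariance has to be extracted from hypothesis (b) rather than from abstract Bratteli theory: each approximating finite dimensional principal subtriple $W\sub Z$ gives rise to the finite dimensional sub-TRO $\tau(\iota_W)(\tau(W))$ of $\tau(Z)$, which is automatically $\Phi$-invariant by Lemma~\ref{real form properties} (equivalently, because it is the sub-TRO generated by the pointwise $\Phi$-fixed set $\rho_Z(W)$), and whose linking algebra is an invariant finite dimensional subalgebra of $\Li(\tau(Z))$; running the local characterisation with these produces invariant $A_n$, hence $\Phi$-invariant $T_n$, directly. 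The same mechanism is needed to repair your argument for universal reversibility, which as stated only records that $\Phi$ fixes $\rho_Z(Z)$ pointwise --- true by the very construction of $\Phi$ --- whereas what \cite[Lemma 4.2]{BunceFeelyTimoneyI} requires is the reverse inclusion $\set{z\in\tau(Z)}{\Phi(z)=z}\sub\rho_Z(Z)$.
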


%

\begin{cor}
Every AF-triple system $Z$ is the norm closure of the linear span of its tripotents.
\end{cor}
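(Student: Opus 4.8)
The plan is to reduce the statement to the finite dimensional building blocks, where it follows immediately from the spectral theory of a single element. Write $Z=\overline{\bigcup_{n}\mu_n(Z_n)}$ as the inductive limit of finite dimensional JB*-triples $Z_n$, with canonical morphisms $\mu_n\colon Z_n\to Z$. Since the norm closure of the linear span of the tripotents of $Z$ is itself a closed subspace of $Z$, it suffices to show that every element of $\bigcup_n\mu_n(Z_n)$ lies in the linear span of the tripotents of $Z$; taking closures then forces this span to contain $\overline{\bigcup_n\mu_n(Z_n)}=Z$, which is the nontrivial inclusion (the reverse one being automatic).

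First I would record two elementary facts. Any JB*-morphism $\vp$ sends tripotents to tripotents, because $\{\vp(u),\vp(u),\vp(u)\}=\vp(\{u,u,u\})=\vp(u)$ whenever $\{u,u,u\}=u$; in particular each $\mu_n$ carries tripotents of $Z_n$ to tripotents of $Z$. Secondly, in a \emph{finite} dimensional JB*-triple every element is a finite complex linear combination of tripotents. To see this, fix $w\in Z_n$ and invoke \cite[Lemma 1.14]{Kaup-ARiemannMappingTheoremForBoundedSymmetricDomainsInComplexBanachSpaces}, exactly as in the proof of Lemma~\ref{aprox of tripotents}: the subtriple $\fT$ generated by $w$ is triple-isomorphic to a commutative C*-algebra $\fC_0(S,\C)$ carrying its canonical triple structure. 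As $\fT$ is finite dimensional, $S$ is a finite set, so $\fC_0(S,\C)\cong\C^{|S|}$ and $\hat{w}=\sum_{s\in S}\hat{w}(s)\,\chi_{\{s\}}$. Each $\chi_{\{s\}}$ is real and $\{0,1\}$-valued, hence satisfies $\{\chi,\chi,\chi\}=\chi$ and is a tripotent of $\fT$, thus of $Z_n$; transporting back through the isomorphism yields $w=\sum_i\lambda_i u_i$ with all $u_i$ tripotents of $Z_n$.

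Combining the two facts, for $z=\mu_n(w)\in\mu_n(Z_n)$ we obtain $z=\sum_i\lambda_i\,\mu_n(u_i)$, a finite linear combination of tripotents of $Z$. Hence $\bigcup_n\mu_n(Z_n)$ is contained in the linear span of the tripotents of $Z$, and passing to the norm closure completes the argument.

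I expect the only genuine point to be the second elementary fact, namely the finite dimensional spectral decomposition; everything else is bookkeeping. Since the commutative Gelfand picture already appears (and is cited) in Lemma~\ref{aprox of tripotents}, the decomposition is immediate once one observes that finite dimensionality forces the spectrum $S$ to be finite. An alternative, avoiding the Gelfand transform, would be to split $Z_n$ into Cartan factors and use the classical singular-value decomposition inside each factor; but the single-element argument is shorter and uniform across all factor types, including the exceptional, spin, and Hilbertian ones not covered by the matrix picture.
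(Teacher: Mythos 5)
Your argument is correct, but it takes a genuinely different route from the paper's. The paper deduces the corollary from the decomposition $Z=P\oplus E\oplus S\oplus H$ of Theorem~\ref{main} and treats each summand separately: the principal part via the density of finite dimensional principal subtriples established just before, the exceptional and Hilbertian parts by short ad hoc arguments, and the spin part via Harris's formulas for the triple product and the norm, which show that every norm-one self-adjoint element of a spin factor is a tripotent. You bypass the classification entirely: you reduce to the finite dimensional building blocks $Z_n$, use the commutative Gelfand picture of \cite[Lemma 1.14]{Kaup-ARiemannMappingTheoremForBoundedSymmetricDomainsInComplexBanachSpaces} (already invoked in Lemma~\ref{aprox of tripotents}) to get the finite spectral decomposition $w=\sum_i\lambda_i u_i$ of an element of $Z_n$ into tripotents --- finite dimensionality of the subtriple generated by $w$ forcing the spectrum $S$ to be finite and discrete, so that the characteristic functions $\chi_{\{s\}}$ lie in $\fC_0(S,\C)$ and are tripotents --- and then push forward along $\mu_n$, which preserves tripotents. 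Both steps are sound, and the closure argument at the start is the right reduction. What your version buys is uniformity and independence: it works verbatim for all factor types (exceptional, spin, Hilbertian included), requires none of the structure theory of Section~5, and would in fact apply to any inductive limit of finite dimensional JB*-triples even before the decomposition theorem is available. What the paper's version buys is the sharper structural byproduct in the spin summand (norm-one self-adjoint elements are already tripotents) and brevity on the page, at the cost of leaning on Theorem~\ref{main}.
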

In case of the principal part $P$ of $Z=P\oplus E\oplus S\oplus H$ this is a consequence of the above. For the other classes this follows by a more direct and quite easy approach. For the somewhat more difficult spin part $S$ use the fact \cite[p.358]{Harris-GeneralizationofCstar-algebras}
that there is always a scalar product as well as an involution $a\mapsto\bar{a}$ defined on $S$ so that
\begin{equation*}
\{a,b,c\}=\langle a,b\rangle c+\langle c,b\rangle a-\langle a,\bar{c}\rangle \bar{b}
\end{equation*}
as well as
\begin{equation*}
\|z\|^2=\langle z,z\rangle+\sqrt{\langle z,z\rangle^2-|\langle z,\bar{z}\rangle|^2}
\end{equation*}
for $a,b,c\in S$. Thus, in the self-adjoint part $S_{\text{sa}}$ all norm-one elements are tripotents.

\bibliographystyle{alpha}

\def\cprime{$'$} \def\cprime{$'$} \def\cprime{$'$}

\end{document}